\newtheorem{theorem}{Theorem}[section]
\newtheorem{proposition}[theorem]{Proposition}
\newtheorem{lemma}[theorem]{Lemma}
\newtheorem{corollary}[theorem]{Corollary}
\theoremstyle{definition}
\newtheorem{example}[theorem]{Example}
\newtheorem{definition}[theorem]{Definition}
\begin{document}
	
\author[P. Danchev]{Peter Danchev}
\address{Institute of Mathematics and Informatics, Bulgarian Academy of Sciences, 1113 Sofia, Bulgaria}
\email{danchev@math.bas.bg; pvdanchev@yahoo.com}
\author[M. Doostalizadeh]{Mina Doostalizadeh}
\address{Department of Mathematics, Tarbiat Modares University, 14115-111 Tehran Jalal AleAhmad Nasr, Iran}
\email{d\_mina@modares.ac.ir;  m.doostalizadeh@gmail.com}
\author[O. Hasanzadeh]{Omid Hasanzadeh}
\address{Department of Mathematics, Tarbiat Modares University, 14115-111 Tehran Jalal AleAhmad Nasr, Iran}
\email{o.hasanzade@modares.ac.ir; hasanzadeomiid@gmail.com}
\author[A. Javan]{Arash Javan}
\address{Department of Mathematics, Tarbiat Modares University, 14115-111 Tehran Jalal AleAhmad Nasr, Iran}
\email{a.darajavan@modares.ac.ir; a.darajavan@gmail.com}
\author[A. Moussavi]{Ahmad Moussavi}
\address{Department of Mathematics, Tarbiat Modares University, 14115-111 Tehran Jalal AleAhmad Nasr, Iran}
\email{moussavi.a@modares.ac.ir; moussavi.a@gmail.com}

\title[Rings whose non-units are strongly weakly nil-clean]{Rings Whose Non-Invertible Elements are Strongly Weakly Nil-Clean}
\keywords{Strongly nil-clean element (ring), Weakly nil-clean element (ring), Strongly weakly nil-clean element (ring), Matrix ring, Group ring}
\subjclass[2010]{16S34, 16U60}

\maketitle




\begin{abstract}
The target of the present work is to give a new insight in the theory of {\it strongly weakly nil-clean} rings, recently defined by Kosan and Zhou in the Front. Math. China (2016) and further explored in detail by
Chen-Sheibani in the J. Algebra Appl. (2017). Indeed, we consider those rings whose non-units are strongly weakly nil-clean and succeed to establish that this class of rings is strongly $\pi$-regular and, even something more, that it possesses a complete characterization in terms of the Jacobson radical and sections of the $2\times 2$ full matrix ring. Additionally, some extensions like Morita context rings and groups rings are also studied in this directory.
\end{abstract}

\section{Introduction and Basic Concepts}

The motivation of this paper is the class of {\it strongly weakly nil-clean} rings as well as results based on a recent work by Danchev et al. on GSNC rings (cf. \cite{5}, \cite{29} and \cite{30}). In fact, we will combine in what follows the used by us approach developed in \cite{5} in order to obtain a complete description of a widely more large class of rings.

Everywhere in the current paper, let $R$ be an associative but {\it not} necessarily commutative ring having identity element, usually stated as $1$. Standardly, for such a ring $R$, the letters $U(R)$, $Nil(R)$ and $Id(R)$ are designed for the set of invertible elements (also termed as the unit group of $R$), the set of nilpotent elements and the set of idempotent elements in $R$, respectively. Likewise, $J(R)$ denotes the Jacobson radical of $R$, and $Z(R)$ denotes the center of $R$. The ring of $n\times n$ matrices over $R$ and the ring of $n\times n$ upper triangular matrices over $R$ are, respectively, denoted by $M_{n}(R)$ and $T_{n}(R)$. We also denote by $R[x]$ and $R[[x]]$ the polynomial ring and power series ring over $R$, respectively. Standardly, a ring is said to be {\it abelian} if each of its idempotents is central.

Historically, Nicholson introduced in \cite{16} and \cite{17} the notion of {\it clean} and {\it strongly clean} rings. A ring is called (strongly) clean if every its element can be written as sum of an unit element and an idempotent element (that commute). In the past two decades, there has been intense interest in this class of rings as well as certain generalizations of their induced variants were produced.

Later on, as a different variant of clean rings, Ko{\c{s}}an et al. introduced \cite{19} the notion of {\it weakly clean} rings in general aspect (note that the commutative variant of weak cleanness was firstly defined by Ahn-Anderson in \cite{AA}). A ring $R$ is called (strongly) weakly clean if every its element can be written as sum or difference an unit element and an idempotent element (that commute). The authors in \cite{19} proved many fundamental properties of weakly clean rings.

Likewise, Diesl introduced in \cite{2} the notion of {\it nil-clean} rings. A ring is said to be (strongly) nil-clean if, for each $r\in R$, there are $q\in Nil(R)$ and $e\in Id(R)$ such that $r=q+e$ ($qe=eq$). He obtained many fundamental properties as well as developed a general theory of (strongly) nil-clean rings.

As a different variant of nil-clean rings, Danchev and McGovern introduced in \cite{21} the notion of weak nil-cleanness for commutative rings. A commutative ring $R$ is called {\it weakly nil-clean} if, for each $r\in R$, there are $q\in Nil(R)$ and $e\in Id(R)$ such that $r=q+e$ or $r=q-e$. The authors proved many fundamental properties of commutative weakly nil-clean rings. They also provided a complete characterization theorem of commutative weakly nil-clean rings and proved a characterization theorem of commutative weakly nil-clean group rings. Furthermore, Breaz et al. defined in \cite{20} the notion of weak nil-cleanness for arbitrary rings as that in \cite{21}, and they also obtained a decomposition theorem for abelian rings to be weakly nil-clean. They also proved that, for a division ring $D$ and a natural number $n$, $M_n(D)$ is weakly nil-clean if, and only if, either $D \cong \mathbb{Z}_2$, or $D \cong \mathbb{Z}_3$ and $n=1$.

As a common generalization of the abelian weakly nil-clean rings, Kosan and Zhou introduced in \cite{28} the concept of {\it strongly weakly nil-clean} rings. A ring $R$ is called strongly weakly nil-clean if, for each $r\in R$, $r=q+e$ or $r=q-e$, where $q\in Nil(R)$, $e\in Id(R)$ and $qe=eq$. In that article both the structural theorems about commutative weakly nil-clean rings and abelian weakly nil-clean rings were somewhat extended.

In the other vein, a ring $R$ is called $UU$ if all of its units are unipotent, that is, $U(R) \subseteq 1+{\rm Nil}(R)$ (and so $1+{\rm Nil}(R)=U(R)$). This notion was introduced by C{\u{a}}lug{\u{a}}reanu \cite{23}, and next examined in more details by Danchev and Lam in \cite{22}. They showed that $R$ is strongly nil-clean if, and only if, $R$ is an exchange $UU$ ring. Besides, in a more general context, a ring $R$ is called {\it weakly UU}, and abbreviated as $WUU$, if $U(R)={\rm Nil}(R) \pm 1$. This is equivalent to the condition that every unit can be presented as either $n+1$ or $n-1$, where $n \in {\rm Nil}(R)$ (see the original source \cite{24} too).

Due to Danchev et al. (see \cite{5}), a ring $R$ is called {\it generalized strongly nil-clean}, or just briefly abbreviated by GSNC, if every non-invertible element in $R$ is strongly nil-clean. The authors in \cite{5} proved many fundamental properties of GSNC rings. They also provided a characterization theorem of semi-local GSNC rings and proved a characterization theorem of GSNC group rings. One of the results in \cite{5} states that a ring $R$ is strongly nil-clean if, and only if, $R$ is strongly clean and $UU$.

Now, considering the strongly weakly nil-clean rings from \cite{1} and \cite{28}, in parallel to the aforementioned GSNC rings, we introduce in a way of similarity the GSWNC rings, i.e., those rings whose non-invertible elements are strongly weakly nil-clean. These rings are simultaneously a non-trivial generalization of both strongly weakly nil-clean rings and GSNC rings, and also is a proper subclass of the class of strongly weakly clean rings as mentioned above.

So, the leitmotif of the present article is various non-elementary expansions of these rings to be studied in the sequel in-depth. Our central achievemnets are Theorems~\ref{2.25}, \ref{2.38} and \cite{theorem 2.36}, respectively. Some applications of these results to GSWNC group rings are provided in Theorems~\ref{3.12} and \ref{3.3} as well.

\section{GSWNC rings}

We begin here with our key instrument.

\begin{definition}\label{2.1}
A ring $R$ is called {\it generalized strongly weakly nil-clean} (or just {\it GSWNC} for short) if every non-invertible element in $R$ is strongly weakly nil-clean.	
\end{definition}

We now need a series of preliminaries.

\begin{lemma}\label{2.2}
Let $R$ be a ring and $a\in R$ be a strongly weakly nil-clean element. Then, $-a$ is strongly weakly-clean.
\end{lemma}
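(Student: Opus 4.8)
The plan is to unwind the definition of a strongly weakly nil-clean element and then absorb a copy of $1$ into the nilpotent summand. By hypothesis we may write $a = q + e$ or $a = q - e$ for some $q \in Nil(R)$, $e \in Id(R)$ with $qe = eq$; in either case $-a$ is obtained simply by negating, and what is left is to rewrite $-a$ in the form $u + f$ or $u - f$ with $u \in U(R)$, $f \in Id(R)$ and $uf = fu$.

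In the first case, where $a = q + e$, I would write $-a = -q - e = -(1+q) + (1-e)$. Since $q$ is nilpotent, $1 + q \in U(R)$, whence $u := -(1+q) \in U(R)$; also $f := 1 - e \in Id(R)$; and $uf = fu$ because $q$ commutes with $e$ (hence with $1-e$). Thus $-a = u + f$ exhibits $-a$ as strongly weakly clean.

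In the second case, where $a = q - e$, I would instead write $-a = -q + e = (1-q) - (1-e)$. Again $1 - q \in U(R)$ as $q$ is nilpotent, $1 - e \in Id(R)$, and these two commute since $q$ and $e$ do; so $-a = (1-q) - (1-e)$ displays $-a$ as strongly weakly clean, now of the ``difference'' type.

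The only step requiring a little care — essentially the sole obstacle — is the bookkeeping: one must pass to the complementary idempotent $1-e$ rather than use $e$ itself, so that the constant terms fuse $\pm q$ into the unit $1 \pm q$, and one must keep track of which sign case produces the ``$u+f$'' presentation and which produces the ``$u-f$'' presentation. Commutativity of the unit and the idempotent is then automatic from $qe = eq$. (In fact the very same manipulation shows that $-a$ is again strongly weakly nil-clean, which is formally stronger, but the conclusion as stated is all that is needed in the sequel.)
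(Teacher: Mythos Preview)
Your proof is correct and follows essentially the same approach as the paper's: in both cases the paper passes to the complementary idempotent $1-e$ and the unit $q+1$ (respectively $1-q$), yielding exactly the decompositions $-a = -(q+1) + (1-e)$ and $-a = (1-q) - (1-e)$ that you found. Your version is, if anything, slightly more careful in explicitly checking the commutativity $uf = fu$ required for the ``strongly'' part, and your parenthetical remark that $-a$ is in fact strongly weakly nil-clean again is also correct.
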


\begin{proof}
Assume $a = q \pm e$ is a strongly weakly nil-clean representation. If $a=q+e$, then we have, $-a=(1-e)-(q+1)$ where $1-e$ is an idempotent and $q+1$ is an unit in $R$. If $a=q-e$, then we have $-a=-(1-e)+(1-q)$ where again $1-e$ is an idempotent and $1-q$ is an unit in $R$. Thus, $-a$ has a strongly weakly-clean decomposition and hence it is strongly weakly-clean element.
\end{proof}

An automatic consequence is the following obvious claim.

\begin{corollary}\label{2.3}
Every GSWNC ring is strongly weakly-clean.
\end{corollary}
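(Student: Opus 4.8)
The statement to prove is Corollary~\ref{2.3}: every GSWNC ring is strongly weakly-clean.

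The plan is to deduce this directly from Lemma~\ref{2.2} together with the trivial observation that every invertible element already has an obvious strongly weakly clean decomposition. Let $R$ be a GSWNC ring and take an arbitrary $r \in R$. There are two cases. If $r \in U(R)$, then I would write $r = u + e$ with $u = r$ and $e = 0$; here $0$ is an idempotent, $r$ is a unit, and they commute, so $r$ is (strongly) weakly clean. If $r \notin U(R)$, then by Definition~\ref{2.1} the element $r$ is strongly weakly nil-clean, hence in particular strongly weakly clean, since every nilpotent is a unit-times... more precisely since a nilpotent $q$ satisfies $q = (q+1) - 1$ wait — the cleanest route is: a strongly weakly nil-clean element is $q \pm e$ with $q$ nilpotent, $e$ idempotent, $qe = eq$; but a nilpotent element is trivially a difference of a unit and an idempotent is not what we want. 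Instead I would simply note that $q$ itself, being nilpotent, satisfies $q = q + 0$ — no. The genuinely correct simple remark is that every strongly nil-clean decomposition is a strongly clean decomposition because $\mathrm{Nil}(R) \subseteq$ ... no, nilpotents are not units.

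Let me reconsider: the right observation is that if $r = q + e$ with $q$ nilpotent and $e$ idempotent commuting, this is not automatically a clean decomposition. So instead I would invoke Lemma~\ref{2.2} applied to $-r$: if $r \notin U(R)$ then $-r \notin U(R)$ as well (units form a group closed under negation), so $-r$ is strongly weakly nil-clean, and Lemma~\ref{2.2} tells us that $-(-r) = r$ is strongly weakly clean. Thus in both cases $r$ is strongly weakly clean, and since $r$ was arbitrary, $R$ is a strongly weakly clean ring.

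The main (and only) subtlety is making sure the invertible elements are handled, since Definition~\ref{2.1} says nothing about them; the decomposition $r = r + 0$ with the idempotent $0$ settles this, and one should double-check that the ambient definition of ``strongly weakly clean ring'' being used allows the idempotent to be $0$ (it does, as $0 \in Id(R)$ and $r \cdot 0 = 0 \cdot r$). There is essentially no obstacle here beyond this bookkeeping; the content is entirely contained in Lemma~\ref{2.2}.

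\begin{proof}
Let $R$ be a GSWNC ring and let $r \in R$ be arbitrary. If $r \in U(R)$, then $r = r + 0$ with $r \in U(R)$, $0 \in Id(R)$, and $r\cdot 0 = 0\cdot r$, so $r$ is strongly weakly clean. If $r \notin U(R)$, then $-r \notin U(R)$ as well, since the units of $R$ form a group under multiplication and hence are closed under negation. By Definition~\ref{2.1}, the element $-r$ is strongly weakly nil-clean, so Lemma~\ref{2.2} applies to $-r$ and yields that $-(-r) = r$ is strongly weakly clean. In either case $r$ admits a strongly weakly clean decomposition, and as $r$ was arbitrary, $R$ is a strongly weakly clean ring.
\end{proof}
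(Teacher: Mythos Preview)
Your proof is correct and follows essentially the same approach as the paper, which simply declares the corollary an ``automatic consequence'' of Lemma~\ref{2.2}. Your route of applying Lemma~\ref{2.2} to $-r$ (rather than to $r$) in the non-unit case is a clean way to land directly on $r$ being strongly weakly clean, and your handling of units via $r=r+0$ is the obvious filler the paper omits.
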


In other words, GSWNC rings lie between strongly weakly nil-clean rings and strongly weakly clean rings. However, it is worthwhile noticing that $\mathbb{Z}_3\times \mathbb{Z}_3$ and $M_2(\mathbb{Z}_3)$ and $M_2(\mathbb{Z}_2)$ are all GSWNC rings that are manifestly not strongly weakly nil-clean, while $\mathbb{Z}_{2}[[x]]$ and $\mathbb{Z}_{3}[[x]]$ are strongly weakly-clean rings that are not GSWNC. Also, $M_2(\mathbb{Z}_3)$ and $\mathbb{Z}_3\times \mathbb{Z}_3$ and $\mathbb{Z}_3\times \mathbb{Z}_2$ are all GSWNC rings, but are not GSNC.

\begin{lemma}\label{2.4}
Let $R_{i}$ be a ring for all $i\in I$. If $\prod^{n}_{i=1}R_{i}$ is GSWNC, then each $R_{i}$ is GSWNC.
\end{lemma}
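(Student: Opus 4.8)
The plan is to peel off one factor at a time; by induction it suffices to treat the case $n=2$, say $R = R_1 \times R_2$, and show that $R_1$ is GSWNC. So fix a non-invertible element $a \in R_1$; I must produce a strongly weakly nil-clean decomposition of $a$ inside $R_1$. The natural move is to lift $a$ to the element $(a,0) \in R$ and exploit the hypothesis on $R$.

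First I would observe that $(a,0)$ is \emph{not} a unit of $R$: a unit of a finite product is a tuple of units in each coordinate, and $0$ is not a unit of $R_2$ (recall $R_2$ has an identity $1 \neq$ anything invertible only if $R_2$ is trivial; if $R_2 = 0$ the statement is vacuous since then $R \cong R_1$). Hence, since $R$ is GSWNC, there exist $q = (q_1,q_2) \in Nil(R)$ and $e = (e_1,e_2) \in Id(R)$ with $qe = eq$ and $(a,0) = q + e$ or $(a,0) = q - e$. Next I would read this equation off in the first coordinate: in either case $a = q_1 + e_1$ or $a = q_1 - e_1$. It remains only to note that nilpotents, idempotents, and commutation are all detected coordinatewise: $q = (q_1,q_2) \in Nil(R)$ forces $q_1 \in Nil(R_1)$ (a power of $q$ being zero gives the same power of $q_1$ being zero), $e_1 \in Id(R_1)$ since $e^2 = e$, and $q_1 e_1 = e_1 q_1$ since $qe = eq$. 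Therefore $a = q_1 \pm e_1$ is a genuine strongly weakly nil-clean decomposition in $R_1$, so $R_1$ is GSWNC. By symmetry each $R_i$ is GSWNC, and the general finite case follows by induction on $n$.

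There is essentially no hard part here; the only point requiring a word of care is that the argument uses that the index set is \emph{finite} (the statement itself restricts to $\prod_{i=1}^n R_i$), because the claim "$(a,0)$ is a non-unit" relies on the coordinate description of units in a finite product, and the embedding $R_1 \hookrightarrow R_1 \times R_2$ sending $a \mapsto (a,0)$ is not unital — one must resist the temptation to phrase this via a unital ring homomorphism and instead argue directly with the defining equations, as above.
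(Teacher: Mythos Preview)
Your proof is correct and follows essentially the same approach as the paper: lift the non-unit to a non-unit of the product, apply the GSWNC hypothesis there, and project the resulting decomposition back to the factor. The only cosmetic difference is the choice of lift---the paper places $1$'s in the other coordinates (using $(1,\ldots,a_i,\ldots,1)$, which is a non-unit iff $a_i$ is), whereas you use $(a,0)$; your lift in fact coincides with the one the paper employs later in Proposition~\ref{2.5} and, as you implicitly note, would even yield the stronger conclusion that each factor is strongly weakly nil-clean.
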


\begin{proof}
Let $a_{i}\in R_{i}$, where $a_{i}\notin U(R_{i})$, whence $ (1,1,\ldots,a_{i},1,\ldots,1)\notin  U(\prod^{n}_{i=1}R_{i}) $. So, $(1,1,\ldots,a_{i},1,\ldots,1)$ is strongly weakly nil-clean, and hence $a_{i}$ is strongly weakly nil-clean. If, however, $a_{i}$ is not strongly weakly nil-clean, we clearly conclude that $(1,1,\ldots,a_{i},1,\ldots,1)$
is not strongly weakly nil-clean and this is a contradiction.
\end{proof}

\begin{proposition}\label{2.5}
Let $R$ and $S$ be rings. If $R\times S$ is GSWNC, then $R$ and $S$ are strongly weakly nil-clean.
\end{proposition}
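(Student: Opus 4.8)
The plan is to leverage Proposition~\ref{2.5}'s hypothesis together with Lemma~\ref{2.4} and the fact that a GSWNC ring is strongly weakly clean (Corollary~\ref{2.3}). First I would apply Lemma~\ref{2.4} to conclude that both $R$ and $S$ are themselves GSWNC. The substantive content is thus reduced to showing that each factor, which is GSWNC, is in fact \emph{strongly weakly nil-clean} (i.e., the unit elements also admit such a representation) — the extra leverage being that the direct factor sits inside a product. The idea is to use units of $S$ to ``kill'' units of $R$: if $a\in U(R)$, one wants to pick a non-unit $s\in S$ so that $(a,s)$ is a non-unit of $R\times S$ and is therefore strongly weakly nil-clean, and then push that representation down to the first coordinate.

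The key steps, in order, would be: (i) From Lemma~\ref{2.4}, $R$ and $S$ are GSWNC; in particular every \emph{non}-unit of each is already strongly weakly nil-clean, so it only remains to treat units. (ii) Fix $a\in U(R)$. Choose $s\in S$ a non-invertible element — the natural choice is $s=0$, since $(a,0)\notin U(R\times S)$. Then $(a,0)$ is strongly weakly nil-clean in $R\times S$, say $(a,0)=(q_1,q_2)\pm(e_1,e_2)$ with $(q_1,q_2)\in Nil(R\times S)$, $(e_1,e_2)\in Id(R\times S)$ commuting with it. (iii) Reading off the first coordinate gives $a=q_1\pm e_1$ with $q_1\in Nil(R)$, $e_1\in Id(R)$, $q_1e_1=e_1q_1$, i.e., $a$ is strongly weakly nil-clean in $R$. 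Since $a$ was an arbitrary unit and non-units are already covered, $R$ is strongly weakly nil-clean; by symmetry so is $S$.

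The main subtlety — and the only place one must be slightly careful — is the implicit uniformity of the $\pm$ sign in the definition of a strongly weakly nil-clean \emph{ring}: the representation $(a,0)=(q_1,q_2)\pm(e_1,e_2)$ uses a single sign choice for the whole pair, but that is exactly what restricting to one coordinate preserves, so no difficulty actually arises. (One could alternatively pick $s$ to be any fixed non-unit of $S$, e.g. a nonzero nilpotent if $S$ has one, but $s=0$ always works and keeps the argument uniform.) I do not anticipate a genuine obstacle here; the proposition is essentially a coordinate-projection argument once Lemma~\ref{2.4} has done the reduction to handling units, and the only thing to watch is not to accidentally assume $R$ or $S$ is nontrivial — but if, say, $S=0$ then $R\times S\cong R$ and the statement is vacuous or immediate, so that degenerate case causes no trouble either.
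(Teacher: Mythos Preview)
Your argument is correct and uses exactly the same key idea as the paper: for $a\in R$ the element $(a,0)$ is a non-unit of $R\times S$, so its strongly weakly nil-clean decomposition projects to one for $a$. The paper applies this uniformly to \emph{every} $a\in R$, so your preliminary appeal to Lemma~\ref{2.4} and the unit/non-unit case split are unnecessary detours; one small correction to your closing remark: if $S=0$ the statement does \emph{not} become vacuous or immediate (it would assert ``GSWNC $\Rightarrow$ strongly weakly nil-clean,'' which fails, e.g.\ for $\mathbb{Z}_3\times\mathbb{Z}_3$), so both you and the paper are tacitly assuming $S\neq 0$.
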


\begin{proof}
Let $a\in R$ is an arbitrary element, so $(a,0) \in R \times S$ is not unit. Then by hypothesis we have $(a,0)=(q,0)\pm(e,0)$ where $(q,0)$ is a nilpotent and $(e,0)$ is an idempotent in $R \times S$ and $(q,0)(e,0)=(e,0)(q,0)$. Hence $a=q\pm e$ where $q$ is a nilpotent and $e$ is an idempotent in $R$ and $qe=eq$. So $a$ is a strongly weakly nil-clean element. Then $R$ is strongly weakly nil-clean ring. Similarly $S$ is strongly weakly nil-clean ring.
\end{proof}

\begin{proposition}\label{2.6}
The direct product $\prod_{i=1}^{n} R_i$ is GSWNC for $n \ge 3$ if, and only if, each $R_i$ is strongly weakly nil-clean and at most one of them is {\it not} strongly nil-clean.
\end{proposition}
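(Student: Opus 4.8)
The plan is to prove both implications by exploiting the way idempotents and nilpotents behave componentwise in a finite direct product, together with the structural information already available from Proposition~\ref{2.5} and Lemma~\ref{2.4}. For the backward direction, assume each $R_i$ is strongly weakly nil-clean and at most one, say $R_n$, fails to be strongly nil-clean; recall that being strongly nil-clean forces $2$ to be nilpotent (indeed $1=0+1=1-0$ is the only weakly nil-clean expression issue, and more to the point in a strongly nil-clean ring $-1$ is an idempotent plus a nilpotent commuting, forcing $-1-1\in\mathrm{Nil}$, i.e. $2\in\mathrm{Nil}$), whereas a strongly weakly nil-clean ring that is not strongly nil-clean has $2\in U(R)$ (this is the standard dichotomy for weakly nil-clean rings, and I would cite it from \cite{28} or \cite{20}). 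Given a non-unit $a=(a_1,\dots,a_n)$, at least one coordinate $a_k$ is a non-unit. If $k\le n-1$ or if all the strongly-nil-clean coordinates happen to cooperate, I write each $a_i=q_i+e_i$ with $q_i\in\mathrm{Nil}(R_i)$, $e_i\in\mathrm{Id}(R_i)$, commuting, using the ``$+$'' form for $i\le n-1$ (possible since $R_i$ is strongly nil-clean hence strongly nil-clean elementwise), and for $R_n$ I use whichever sign its strongly weakly nil-clean decomposition provides, say $a_n=q_n\pm e_n$. If the sign is ``$+$'' we are done. If it is ``$-$'', I need to flip the sign globally: since $2\in\mathrm{Nil}(R_i)$ for $i\le n-1$, the element $e_i$ equals $-e_i+2e_i$ with $2e_i\in\mathrm{Nil}$, so $a_i=(q_i+2e_i)+(-e_i)$ is a ``$-$''-type strongly nil-clean decomposition as well; hence I can realize $a=q-e$ with $q=(q_1+2e_1,\dots,q_{n-1}+2e_{n-1},q_n)$ nilpotent and $e=(e_1,\dots,e_n)$ idempotent, commuting coordinatewise. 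This shows every non-unit of the product is strongly weakly nil-clean.

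For the forward direction, suppose $\prod_{i=1}^n R_i$ is GSWNC with $n\ge 3$. By Lemma~\ref{2.4} each $R_i$ is GSWNC, and by Proposition~\ref{2.5} (applied to the splitting $R_i\times\prod_{j\ne i}R_j$) each $R_i$ is in fact strongly weakly nil-clean. It remains to rule out that two distinct factors, say $R_1$ and $R_2$, are both not strongly nil-clean. The key is that ``not strongly nil-clean but strongly weakly nil-clean'' forces the presence of a genuinely ``negative'' witness: concretely, in such a ring $2$ is a unit, so $R_i\ne 0$ and there exists a non-unit element whose only strongly weakly nil-clean representation has the minus sign — the simplest candidate being $-e$ for a suitable idempotent $e$, or more robustly one argues via the element that witnesses failure of strong nil-cleanness. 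Using $n\ge 3$, pick a third index, say $3$, and form the element $a=(a_1,a_2,0,1,\dots,1)\in\prod R_i$, where $a_1\in R_1$ and $a_2\in R_2$ are chosen so that $a_1$ admits \emph{only} a ``$+$''-type and $a_2$ admits \emph{only} a ``$-$''-type strongly weakly nil-clean decomposition. This $a$ is a non-unit (third coordinate is $0$), so it must have a global decomposition $a=q+e$ or $a=q-e$ with $q$ nilpotent, $e$ idempotent, $qe=eq$; projecting to the first coordinate forces the ``$+$'' sign, projecting to the second forces the ``$-$'' sign, a contradiction.

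The main obstacle is making precise the claim that a strongly weakly nil-clean ring which is not strongly nil-clean must contain an element that is ``sign-rigid'' in the above sense — i.e. that one cannot always re-express a ``$-$''-decomposition as a ``$+$''-decomposition. The resolution is exactly the $2$-torsion dichotomy: if $R$ is strongly weakly nil-clean and not strongly nil-clean then $2\in U(R)$, and then the element $-1$ (which is a non-unit? — no, it is a unit, so instead take a non-unit idempotent $e\ne 0,1$, which exists unless $R$ is a division ring, and handle the division-ring case separately using the classification $D\cong\mathbb{Z}_3$) has $-e$ with the property that $-e=q'+e'$ commuting would give, after projecting through the abelianization or using that idempotents here are central in the relevant corner, $2e\in\mathrm{Nil}$, contradicting $2\in U(R)$. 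I would isolate this as a short preliminary lemma: \emph{in a strongly weakly nil-clean ring $R$ that is not strongly nil-clean, there is a non-unit whose every strongly weakly nil-clean representation uses the $-$ sign, and likewise one using only the $+$ sign.} Once that lemma is in hand, both directions above are routine componentwise bookkeeping; the finite-product hypothesis $n\ge3$ is used only to have a ``spare'' coordinate to set to $0$ (making the test element a non-unit) while the two troublesome factors are exercised simultaneously.
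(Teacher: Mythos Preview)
Your approach differs from the paper's, which for both directions simply invokes \cite[Theorem~2.2]{1} (the Chen--Sheibani characterization of when a finite direct product is strongly weakly nil-clean): for ($\Leftarrow$) that theorem already gives that $\prod R_i$ is strongly weakly nil-clean, hence GSWNC; for ($\Rightarrow$) one applies Proposition~\ref{2.5} to the splitting $\big(\prod_{i<n}R_i\big)\times R_n$, then uses \cite[Theorem~2.2]{1} on $\prod_{i<n}R_i$ to force all but one of $R_1,\dots,R_{n-1}$ to be strongly nil-clean, and finally regroups as $\big(\prod_{i\le n-2}R_i\big)\times(R_{n-1}\times R_n)$ and repeats. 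Your route is more self-contained, essentially reproving the relevant piece of \cite[Theorem~2.2]{1} by hand.

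Your backward direction is correct. In the forward direction the strategy is sound, but two auxiliary claims are false as stated. First, ``strongly weakly nil-clean and not strongly nil-clean $\Rightarrow 2\in U(R)$'' fails: take $R=\mathbb{Z}_2\times\mathbb{Z}_3$, which is strongly weakly nil-clean, not strongly nil-clean, and has $2=(0,2)\notin U(R)$. Second, your proposed lemma---that such an $R$ contains a \emph{non-unit} admitting only the $-$ sign---fails already in $\mathbb{Z}_3$, whose sole non-unit $0$ is trivially strongly nil-clean. Fortunately neither claim is needed. The correct and much simpler lemma is: if $R$ is strongly weakly nil-clean but not strongly nil-clean, pick any $b\in R$ with no $+$-decomposition; then $b$ is $-$-rigid, and $-b$ is $+$-rigid (were $-b=q-e$ with $qe=eq$, then $b=(-q)+e$ would be a $+$-decomposition of $b$). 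These rigid elements may be units, but in your tuple $(a_1,a_2,0,\dots)$ the third coordinate already guarantees non-invertibility, so you never needed $a_1,a_2$ themselves to be non-units. With this correction your argument goes through cleanly; indeed your own parenthetical ``or more robustly one argues via the element that witnesses failure of strong nil-cleanness'' is exactly the right lemma---lead with it and drop the $2\in U(R)$ detour.
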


\begin{proof}
($\Rightarrow$). Assume \(\prod_{i=1}^{n} R_i\) is a GSWNC ring. Therefore, with Proposition \ref{2.5} at hand, \(\prod_{i=1}^{n-1} R_i\) and \(R_n\) are strongly weakly nil-clean rings. Thus, according to \cite[Theorem 2.2]{1}, without loss of generality, we can assume that for each \(1 \le i \le n-2\), \(R_i\) is a strongly nil-clean ring. Again, since \(\prod_{i=1}^{n} R_i = (R_1 \times \cdots \times R_{n-2}) \times (R_{n-1} \times R_n)\), Proposition \ref{2.5}, implies that \(R_{n-1} \times R_n\) is strongly weakly nil-clean. Therefore, by \cite[Theorem 2.2]{1}, we can assume that \(R_{n-1}\) is strongly nil-clean and \(R_n\) is strongly weakly nil-clean.\\
($\Leftarrow$). It is clear owing to Theorem 2.2 proved in \cite{1}.
\end{proof}

\begin{example}
The ring $\mathbb{Z}_3 \times \mathbb{Z}_3$ is GSWNC, but is {\it not} strongly weakly nil-clean.	
\end{example}	

\begin{corollary}\label{2.7}
Let $L=\prod_{i \in I} R_i$ be the direct product of rings $R_i \cong R$ and $|I| \geq 3$. Then, $L$ is a GSWNC ring if, and only if, $L$ is a GSNC ring if, and only if, $L$ is strongly nil-clean if, and only if, $R$ is strongly nil-clean.
\end{corollary}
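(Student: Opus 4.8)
The strategy is to run a cycle of implications through the four stated equivalences, using Proposition~\ref{2.6} as the main engine and exploiting the hypothesis $|I|\ge 3$ together with the fact that all factors are isomorphic to a fixed ring $R$. First I would observe that the chain of implications ``$L$ strongly nil-clean $\Rightarrow$ $L$ GSNC $\Rightarrow$ $L$ GSWNC'' is essentially immediate: a strongly nil-clean ring has every element strongly nil-clean, hence in particular every non-unit, so it is GSNC; and every strongly nil-clean element is strongly weakly nil-clean, so GSNC implies GSWNC. The implication ``$R$ strongly nil-clean $\Rightarrow$ $L$ strongly nil-clean'' is the standard fact that strong nil-cleanness is preserved under (even infinite) direct products, which can be cited or checked componentwise. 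So the content is really in closing the loop from ``$L$ GSWNC'' back to ``$R$ strongly nil-clean''.

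For that step I would argue as follows. Suppose $L=\prod_{i\in I}R_i$ with each $R_i\cong R$ is GSWNC. Pick any three distinct indices and apply Proposition~\ref{2.6} (in the form: a product of $n\ge 3$ rings is GSWNC iff each factor is strongly weakly nil-clean and at most one fails to be strongly nil-clean) — but to do this cleanly I would first group the (possibly infinite) product as a product of three nonzero blocks, say $L\cong A\times B\times C$ where each block is itself a product of copies of $R$. Since $L$ is GSWNC, Proposition~\ref{2.6} forces each of $A,B,C$ to be strongly weakly nil-clean and at least two of them to be strongly nil-clean. Now because at least two blocks are strongly nil-clean, and each block contains at least one factor isomorphic to $R$, Lemma~\ref{2.4} applied to a strongly nil-clean (hence GSWNC) block — or more directly the fact that a factor of a strongly nil-clean ring is strongly nil-clean — yields that $R$ is strongly nil-clean.

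The one point that needs a little care, and which I expect to be the main (though minor) obstacle, is the bookkeeping when $I$ is infinite: Proposition~\ref{2.6} is stated for finite products of $n\ge 3$ rings, so one must legitimately regroup $\prod_{i\in I}R_i$ as a ternary product of nonempty sub-blocks and know that each sub-block still ``sees'' a copy of $R$ as a direct factor. Since $|I|\ge 3$ this regrouping is always possible, and a direct factor of a strongly nil-clean ring is strongly nil-clean by a one-line idempotent-and-nilpotent component argument (the same kind used in Proposition~\ref{2.5}). Once this is in place, the cycle closes: $L$ GSWNC $\Rightarrow$ $R$ strongly nil-clean $\Rightarrow$ $L$ strongly nil-clean $\Rightarrow$ $L$ GSNC $\Rightarrow$ $L$ GSWNC, giving all four equivalences at once.
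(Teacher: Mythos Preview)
Your overall strategy matches the paper's: the corollary is stated without proof, as it follows immediately from Proposition~\ref{2.6}. In the finite case $|I|=n\ge 3$, the argument is even simpler than your block-decomposition suggests --- since all factors are isomorphic to $R$, the condition ``at most one factor fails to be strongly nil-clean'' from Proposition~\ref{2.6} forces $R$ itself to be strongly nil-clean (otherwise all $n\ge 2$ copies fail), and the remaining implications are routine.

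There is, however, a genuine gap in your treatment of the infinite case. Your claim that ``strong nil-cleanness is preserved under (even infinite) direct products'' is false in general, even when all factors are isomorphic to a fixed ring $R$. The point is that $(a_i)_i$ strongly nil-clean in $\prod_i R$ requires the nilpotent parts $(q_i)_i$ to satisfy a \emph{uniform} nilpotency bound, and a strongly nil-clean ring $R$ need not have bounded index of nilpotence. For instance, take $R=\mathbb{Z}_2[x_1,x_2,\dots]/(x_1^2,x_2^3,x_3^4,\dots)$ localized suitably, or more concretely any strongly nil-clean ring containing nilpotents of arbitrarily large index; then $R^{\mathbb N}$ is not strongly nil-clean because one can choose a sequence of nilpotents with unbounded index. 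So the implication ``$R$ strongly nil-clean $\Rightarrow L$ strongly nil-clean'' breaks down for infinite $I$.

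Your regrouping trick $L\cong A\times B\times C$ does salvage the direction ``$L$ GSWNC $\Rightarrow R$ strongly nil-clean'' even for infinite $I$, which is nice. But since the reverse implication fails in that generality, the four-way equivalence as stated cannot hold for arbitrary infinite $I$. Given that the paper offers no proof and places the result immediately after the finite-product Proposition~\ref{2.6} (and that Corollary~\ref{2.8} restates the finite case), the intended reading is almost certainly finite $I$, and under that reading your argument is correct and aligned with the paper's.
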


\begin{corollary}\label{2.8}
For any $n \geq 3$, the ring $R^n$ is GSWNC if, and only if, $R^n$ is GSNC if, and only if, $R$ is strongly nil-clean.
\end{corollary}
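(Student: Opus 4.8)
The plan is to deduce this directly from Corollary~\ref{2.7} by taking the index set $I = \{1, 2, \ldots, n\}$. Since $R^n = \prod_{i=1}^n R_i$ with every $R_i \cong R$, and $|I| = n \geq 3$, Corollary~\ref{2.7} applies verbatim and gives the chain of equivalences: $R^n$ is GSWNC $\iff$ $R^n$ is GSNC $\iff$ $R^n$ is strongly nil-clean $\iff$ $R$ is strongly nil-clean. The statement of Corollary~\ref{2.8} is simply the first and last links of this chain (together with the middle one about GSNC), so nothing further is needed.

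If one prefers a self-contained argument not routing through Corollary~\ref{2.7}, I would proceed as follows. The implication ``$R$ strongly nil-clean $\Rightarrow R^n$ strongly nil-clean'' is standard (a finite direct product of strongly nil-clean rings is strongly nil-clean, since nilpotents and commuting idempotents are computed coordinatewise), and ``$R^n$ strongly nil-clean $\Rightarrow R^n$ GSNC $\Rightarrow R^n$ GSWNC'' is immediate from the definitions, since every strongly nil-clean element is strongly weakly nil-clean and every element of a strongly nil-clean ring is strongly nil-clean. For the reverse direction, assume $R^n$ is GSWNC. Apply Proposition~\ref{2.6}: each coordinate $R_i \cong R$ is strongly weakly nil-clean, and at most one of the $n$ copies fails to be strongly nil-clean. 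Since $n \geq 3 > 1$, and all copies are isomorphic to $R$, ``at most one copy is not strongly nil-clean'' forces \emph{every} copy — in particular $R$ itself — to be strongly nil-clean (if $R$ were not strongly nil-clean, then all $n \geq 2$ copies would fail, contradicting the ``at most one'' conclusion). Hence $R$ is strongly nil-clean, which closes the loop.

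The argument has essentially no obstacle: the only point requiring the hypothesis $n \geq 3$ (rather than $n \geq 2$) is the invocation of Proposition~\ref{2.6}, whose statement is phrased for $n \geq 3$; however, even for $n = 2$ the key counting step only needs $n \geq 2$, so the real content is already in Proposition~\ref{2.6}. I would present the short version — one line citing Corollary~\ref{2.7} with $I = \{1, \ldots, n\}$ — as the proof, since Corollary~\ref{2.7} is the genuinely substantive result and Corollary~\ref{2.8} is its specialization to a finite index set.
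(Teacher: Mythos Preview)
Your proposal is correct and matches the paper's approach: the paper states Corollary~\ref{2.8} without proof, immediately after Corollary~\ref{2.7}, precisely because it is the specialization of Corollary~\ref{2.7} to the finite index set $I=\{1,\dots,n\}$. Your one-line deduction from Corollary~\ref{2.7} is exactly what is intended, and your alternative self-contained argument via Proposition~\ref{2.6} is also fine.
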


\begin{proposition}\label{2.9}
Let $R$ be a ring. Then, the following are equivalent:
\begin{enumerate}
\item
$R$ is strongly nil-clean.
\item
${\rm T}_{n}(R)$ is strongly weakly nil-clean for all $n \in \mathbb{N}$.
\item
${\rm T}_n(R)$ is strongly weakly nil-clean for some $n \geq 3$.
\item
${\rm T}_n(R)$ is GSWNC for some $n \geq 3$.
\end{enumerate}
\end{proposition}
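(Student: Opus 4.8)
The plan is to establish the cycle of implications $(1)\Rightarrow(2)\Rightarrow(3)\Rightarrow(4)\Rightarrow(1)$, which is the most economical route. The implications $(2)\Rightarrow(3)$ and $(3)\Rightarrow(4)$ are essentially free: the first is a trivial specialization, and the second follows because every strongly weakly nil-clean ring is GSWNC (indeed GSWNC only requires the non-units to be strongly weakly nil-clean). So the real content lies in $(1)\Rightarrow(2)$ and $(4)\Rightarrow(1)$.

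For $(1)\Rightarrow(2)$, I would recall the well-known fact that if $R$ is strongly nil-clean then so is $T_n(R)$ for every $n$ — this uses that $J(T_n(R))$ consists of the strictly-upper-triangular part together with $J(R)$ on the diagonal, that $T_n(R)/J(T_n(R))\cong (R/J(R))^n$, and that strong nil-cleanness lifts along such nil-radical-type quotients (equivalently, $T_n(R)$ is $UU$ and strongly clean when $R$ is). Since strongly nil-clean rings are strongly weakly nil-clean, we are done. Alternatively one can cite the relevant theorem from \cite{2} or \cite{22} directly. A small subtlety: one should double-check that we genuinely get strong \emph{weak} nil-cleanness of the whole of $T_n(R)$ and not merely of its non-units, but this is immediate since strongly nil-clean $\Rightarrow$ strongly weakly nil-clean for all elements.

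For $(4)\Rightarrow(1)$, suppose $T_n(R)$ is GSWNC for some $n\ge 3$. The key observation is that $T_n(R)$ contains many non-trivial non-central idempotents — for instance the diagonal idempotent matrix units $E_{ii}$ — and, more importantly, it has $R^n$ as a quotient (the diagonal projection), so $R^n$ is GSWNC by a homomorphic-image argument analogous to Lemma~\ref{2.4} (or one notes that $R\times R\times R$ embeds as a corner/quotient and invokes Corollary~\ref{2.8}). Since $n\ge 3$, Corollary~\ref{2.8} then forces $R$ to be strongly nil-clean. The point of requiring $n\ge 3$ is exactly this: it rules out the genuinely "weak" behaviour (as with $\mathbb{Z}_3$), because in a product of three or more copies the weak-clean slack can be used at most once, while the diagonal of $T_n(R)$ demands it in every coordinate, collapsing back to strong nil-cleanness. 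The main obstacle is getting the GSWNC property to pass cleanly from $T_n(R)$ to the diagonal $R^n$: one must verify that a non-unit of $R^n$ lifts to a non-unit of $T_n(R)$ (e.g. by padding with a strictly-upper-triangular entry, or by noting the diagonal map is surjective with kernel inside the nilpotent ideal of strictly-upper-triangular matrices, which preserves strongly weakly nil-clean decompositions), and then to apply Corollary~\ref{2.8}. Once that reduction is in place, everything else is bookkeeping.
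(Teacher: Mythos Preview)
Your proof is correct, and for $(1)\Rightarrow(2)$, $(2)\Rightarrow(3)$, $(3)\Rightarrow(4)$ it matches the paper's treatment (the paper simply cites \cite[Theorem~4.1]{2} for the first and calls the other two trivial).

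For $(4)\Rightarrow(1)$, however, you take a genuinely different route. The paper argues element-wise: given $a\in R$, it builds the upper-triangular matrix $A$ with diagonal $(a,-a,0,\dots,0)$; the zero in the third slot (this is exactly where $n\ge 3$ enters) forces $A\notin U(T_n(R))$, so $A=\pm E+Q$ with $E$ idempotent, $Q$ nilpotent, $EQ=QE$. Reading off diagonal entries, the ``$+$'' case gives $a=e_1+q_1$ from the first slot, while the ``$-$'' case gives $-a=-e_2+q_2$ from the second slot, i.e.\ $a=e_2-q_2$; either way $a$ is strongly nil-clean. Your approach instead factors through the diagonal quotient $T_n(R)\twoheadrightarrow R^n$ (kernel the nilpotent ideal of strictly upper-triangular matrices), observes that GSWNC descends along this map, and then invokes Corollary~\ref{2.8}. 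Your route is more structural and makes transparent why $n\ge 3$ is needed (it is exactly the hypothesis of Corollary~\ref{2.8}); the paper's route is more self-contained at this point in the exposition, since the descent-along-nil-ideals statement you implicitly use is only formalized later as Proposition~\ref{2.13}. If you use your argument, you should either do the descent to $R^n$ by hand (which you sketch: lift a non-unit of $R^n$ to a diagonal non-unit of $T_n(R)$ and project the resulting decomposition) or reorder the paper so that Proposition~\ref{2.13} precedes this proposition.
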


\begin{proof}
(i) $\Rightarrow$ (ii). This follows employing \cite[Theorem 4.1]{2}.\\
(ii) $\Rightarrow$ (iii) $\Rightarrow$ (iv). These are trivial.\\
(iv) $\Rightarrow$ (i). Let $a\in R$. Choose
$$A=\begin{pmatrix}
	a & & & \ast \\
	& -a_{1} & & \\
	& & 0 & \\
	& & & \ddots & \\
	0 & & & & 0
\end{pmatrix}\in {\rm T}_n(R).$$ It is clear that $A$ is non-invertible in ${\rm T}_n(R)$. By hypothesis, we can find an idempotent
$E=\begin{pmatrix}
	e_{1} & & & \ast \\
	& e_{2} & & \\
	& & \ddots & \\
	0 & & & e_{n}
\end{pmatrix}$ and a nilpotent \\
$Q=\begin{pmatrix}
	q_{1} & & & \ast \\
	& q_{2} & & \\
	& & \ddots & \\
	0 & & & q_{n}
\end{pmatrix}$ such that
$$A=\pm \begin{pmatrix}
	e_{1} & & & \ast \\
	& e_{2} & & \\
	& & \ddots & \\
	0 & & & e_{n}
\end{pmatrix}+\begin{pmatrix}
	q_{1} & & & \ast \\
	& q_{2} & & \\
	& & \ddots & \\
	0 & & & q_{n}
\end{pmatrix}$$
and $EQ=QE$. It now follows that $a=e_1+q_1$ or $a=e_2-q_2$, $e_1q_1=q_1e_1$ and $e_2q_2=q_2e_2$. Clearly, $e_1$, $e_2$ are idempotents and $q_1$, $q_2$ are nilpotents in $R$. Thus, $a$ has strongly nil-clean decomposition, proving (i).\\
\end{proof}

\begin{example}
The ring $T_{2}(\mathbb{Z}_3)$ is GSWNC, but $\mathbb{Z}_3$ is {\it not} strongly nil-clean.	
\end{example}

\begin{lemma}\label{2.10}
If $R$ is GSWNC, then $J(R)$ is nil.
\end{lemma}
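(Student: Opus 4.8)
The plan is to take an arbitrary $a\in J(R)$ and prove that it is nilpotent. First I would observe that $a$ is necessarily non-invertible: since $1\notin J(R)$, the ideal $J(R)$ is proper, and a proper (two-sided) ideal contains no unit. Hence the GSWNC hypothesis applies to $a$, yielding a strongly weakly nil-clean representation $a=q+e$ or $a=q-e$ with $q$ nilpotent, $e$ idempotent and $qe=eq$ (in fact the commutation relation will play no role in this argument).

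Next I would pass to the factor ring $\overline{R}=R/J(R)$, writing $\overline{x}$ for the coset of $x$. Because $a\in J(R)$ we have $\overline{a}=0$, so the relation $a=q\pm e$ collapses to $\overline{e}=\mp\,\overline{q}$ in $\overline{R}$. As $q$ is nilpotent in $R$, its image $\overline{q}$ is nilpotent in $\overline{R}$, and therefore so is $\overline{e}$. But $\overline{e}$ is simultaneously an idempotent of $\overline{R}$, and an element that is both idempotent and nilpotent must vanish (from $\overline{e}^{\,k}=0$ one gets $\overline{e}=\overline{e}^{\,2}=\cdots=\overline{e}^{\,k}=0$). Thus $\overline{e}=0$, i.e. $e\in J(R)$.

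Finally I would invoke the elementary fact that $J(R)$ contains no nonzero idempotent: if $e\in J(R)$ is idempotent, then $1-e$ is a unit while $(1-e)e=0$, forcing $e=0$. Consequently $a=q\pm e=q$ is nilpotent, and since $a$ was an arbitrary element of $J(R)$, the ideal $J(R)$ is nil.

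I do not expect a genuine obstacle here; the only points that demand slight care are ensuring at the outset that members of $J(R)$ are really covered by the GSWNC hypothesis (that is, that they are non-units) and treating the two sign cases uniformly. For completeness I note that one can bypass the quotient ring and argue directly inside the corner ring $eRe$: writing $e=\varepsilon(a-q)$ with $\varepsilon=\pm1$, one checks $ea=ae$ and then, multiplying $e=\varepsilon(a-q)$ on both sides by $e$, obtains $e-\varepsilon ae=-\varepsilon\,eqe$, which is nilpotent, while $\varepsilon ae\in J(R)\cap eRe=J(eRe)$ makes $e-\varepsilon ae$ a unit of $eRe$; a nilpotent unit forces $eRe=0$, hence once more $e=0$ and $a=q$. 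This alternative works but is strictly more cumbersome than passing to $R/J(R)$.
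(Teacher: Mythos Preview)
Your proof is correct. Both your argument and the paper's hinge on showing that the idempotent $e$ in the decomposition $a=q\pm e$ must vanish, but you take a small detour: you pass to $R/J(R)$ to deduce $e\in J(R)$ and then invoke the standard fact that $J(R)$ contains no nonzero idempotent. The paper argues more directly, simply observing that $1-e=(1\pm q)\mp a\in U(R)+J(R)\subseteq U(R)$, so the idempotent $1-e$ is a unit and hence equals $1$, giving $e=0$ in one stroke. Your route is perfectly valid and perhaps more conceptual, while the paper's is a one-line computation; neither uses the commuting condition $qe=eq$, as you correctly note for your main argument.
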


\begin{proof}
Choose $j \in J(R)$. Since $j \notin U(R)$, we have $e = e^2 \in R$ and $q \in Nil(R)$ such that $j = q \pm e$ where $qe=eq$. Therefore, $1-e = (q+1) - j \in U(R) + J(R) \subseteq U(R)$ or $1-e = (1-q) + j \in U(R) + J(R) \subseteq U(R)$ , so $e = 0$. Hence, $j = q \in Nil(R)$, as required.
\end{proof}

\begin{proposition}\label{2.11}
Let $R$ be a ring, and let $a\in R$. Then, the following are equivalent:
\\
(i) $R$ is a GSWNC ring.
\\
(ii) For any $a\in R$, where $a\notin U(R)$, $a\pm a^{2}\in {\rm Nil}(R)$.
\end{proposition}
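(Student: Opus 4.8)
The plan is to prove the two implications separately, using the standard trick that an element $a$ is strongly weakly nil-clean precisely when $a-a^2$ or $a+a^2$ is nilpotent (this is the natural weak analogue of the well-known criterion $a$ strongly nil-clean $\iff$ $a-a^2$ nilpotent). So the real content is a small local lemma about a single element, combined with the quantifier over non-units in Definition~\ref{2.1}.

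For the direction (i) $\Rightarrow$ (ii), take $a\notin U(R)$. By hypothesis $a=q\pm e$ with $q\in\mathrm{Nil}(R)$, $e\in Id(R)$, $qe=eq$. First suppose $a=q+e$. Then I would compute $a-a^2=(q+e)-(q+e)^2=(q+e)-(q^2+2qe+e^2)=q-q^2-2qe+e-e^2$; since $e^2=e$ and $qe=eq$, this collapses to $q-q^2-2qe=q(1-q-2e)$. Because $1-q-2e$ and $q$ commute and $q$ is nilpotent, the product $q(1-q-2e)$ is nilpotent, so $a-a^2\in\mathrm{Nil}(R)$. In the other case $a=q-e$, the analogous computation gives $a+a^2=(q-e)+(q-e)^2=q+q^2-2qe+e-e^2$... wait, more carefully $a+a^2=q-e+q^2-2qe+e=q+q^2-2qe=q(1+q-2e)$, again a product of the nilpotent $q$ with a commuting element, hence nilpotent. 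So in either case $a-a^2\in\mathrm{Nil}(R)$ or $a+a^2\in\mathrm{Nil}(R)$, i.e.\ $a\pm a^2\in\mathrm{Nil}(R)$ as stated.

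For (ii) $\Rightarrow$ (i), take $a\notin U(R)$; by hypothesis $a^2-a\in\mathrm{Nil}(R)$ or $a^2+a\in\mathrm{Nil}(R)$. In the first case I would invoke the classical fact that if $a-a^2$ is nilpotent then $a$ is strongly nil-clean (there is an idempotent $e$ in the commutative subring generated by $a$, obtained by the usual lifting of the idempotent $\bar a$ modulo the nilpotent $a-a^2$ in $\mathbb{Z}[a]$, with $a-e$ nilpotent and $ae=ea$); this is quotable from \cite{2}. In the second case, $(-a)-(-a)^2=-a-a^2$ is nilpotent, so $-a$ is strongly nil-clean, say $-a=q+e$ with $q\in\mathrm{Nil}(R)$, $e\in Id(R)$, $qe=eq$; then $a=-q-e=(-q)-e$ is a strongly weakly nil-clean representation of $a$ (with nilpotent part $-q$ and idempotent part $e$, which commute). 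Either way $a$ is strongly weakly nil-clean, and since $a$ was an arbitrary non-unit, $R$ is GSWNC.

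The only mild obstacle is making sure the commuting-idempotent bookkeeping is airtight in the binomial expansions — in particular that every cross term is of the form (nilpotent)$\cdot$(commuting element) so that nilpotence is preserved — and citing the single-element strongly-nil-clean criterion correctly; both are routine, so I expect no genuine difficulty here.
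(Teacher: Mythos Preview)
Your proposal is correct and follows exactly the approach the paper has in mind: the paper's own proof is simply ``The proof is quite analogous to \cite[Theorem 2.1]{1}'', and that theorem of Chen--Sheibani is precisely the single-element criterion (an element is strongly weakly nil-clean iff $a-a^2$ or $a+a^2$ is nilpotent) which you have rederived with the computations $a\mp a^2=q(1\mp q-2e)$ and the idempotent-lifting/``replace $a$ by $-a$'' trick. In effect you have written out in full the argument the paper only cites.
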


\begin{proof}
The proof is quite analogous to \cite[Theorem 2.1]{1}.
\end{proof}

A ring $R$ is said to be {\it strongly $\pi$-regular}, provided that, for any $a \in R$, there exists $n \in \mathbb{N}$ such that $a^n \in a^{n+1}R$.

\medskip

The following observation is crucial.

\begin{corollary}\label{2.12}
Every GSWNC ring is strongly $\pi$-regular.
\end{corollary}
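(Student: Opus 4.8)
The plan is to derive strong $\pi$-regularity elementwise, splitting into the easy unit case and the substantial non-unit case, where the whole argument reduces to the combinatorial identity already isolated in Proposition~\ref{2.11}.

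First I would fix $a\in R$. If $a\in U(R)$, there is nothing to prove: $a=a^{2}a^{-1}\in a^{2}R$, so the defining condition of strong $\pi$-regularity holds for $a$ with $n=1$. The interesting case is $a\notin U(R)$. Then Proposition~\ref{2.11} gives $a+a^{2}\in{\rm Nil}(R)$ or $a-a^{2}\in{\rm Nil}(R)$, and in either subcase the nilpotent element factors as $a(1\pm a)$ with $a$ and $1\pm a$ commuting. Hence for some $k\ge 1$ we have $a^{k}(1\pm a)^{k}=\bigl(a(1\pm a)\bigr)^{k}=0$.

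Next I would expand $(1\pm a)^{k}$ by the binomial theorem. Since $(1\pm a)^{k}-1$ has no constant term, it is divisible by $a$ in $\mathbb{Z}[a]$, so I may write $(1\pm a)^{k}=1+a\,g(a)$ for a suitable $g\in\mathbb{Z}[x]$ (with the signs of the coefficients of $g$ determined by which of $a\pm a^{2}$ is nilpotent). Substituting into $a^{k}(1\pm a)^{k}=0$ yields $0=a^{k}+a^{k+1}g(a)$, whence $a^{k}=-a^{k+1}g(a)\in a^{k+1}R$. This is precisely the strong $\pi$-regularity condition for $a$, and since $a$ was arbitrary, $R$ is strongly $\pi$-regular.

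I do not anticipate a real obstacle: the only delicate bookkeeping is extracting the factor $a$ from $(1\pm a)^{k}-1$ and keeping the two sign cases straight, which is routine. For completeness I would also note the more structural alternative — by Corollary~\ref{2.3} and Lemma~\ref{2.10}, $R$ is strongly weakly clean with $J(R)$ nil, and one could instead quote a standard result that strong $\pi$-regularity lifts modulo a nil ideal — but the direct computation above via Proposition~\ref{2.11} is shorter and self-contained, so that is the route I would take.
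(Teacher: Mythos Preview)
Your proof is correct and follows essentially the same route as the paper: split into unit and non-unit cases, invoke Proposition~\ref{2.11} to get $(a\pm a^{2})^{k}=0$, and extract $a^{k}\in a^{k+1}R$. The paper's version simply asserts this last step without writing out the binomial expansion, whereas you spell out the factorization $(1\pm a)^{k}=1+a\,g(a)$ explicitly; otherwise the arguments coincide.
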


\begin{proof}
Let $R$ be a GSWNC ring. Let $a \in R$. If $a \in U(R)$, then $a$ is a strongly $\pi$-regular element. If, however, $a \notin U(R)$, then, by Proposition \ref{2.11}, there exists $m \in \mathbb{N}$ such that $(a\pm a^2)^m = 0$, so we have $a^m = a^{m+1}r$ for some $r \in R$. Thus, $R$ is a strongly $\pi$-regular ring, as claimed.
\end{proof}	

\begin{proposition}\label{2.13}
Let $R$ be a ring. Then:	
\\
(i) For any nil-ideal $I \subseteq R$, $R$ is GSWNC if, and only if, $R/I$ is GSWNC.
\\
(ii) A ring $R$ is GSWNC if, and only if, $J(R)$ is nil and $R/J(R)$ is GSWNC.
\end{proposition}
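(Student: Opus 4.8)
Both parts concern passing the GSWNC property through nil ideals, and I would reduce (ii) to (i). Recall from Lemma~\ref{2.10} that any GSWNC ring has nil Jacobson radical, so $R$ GSWNC already gives "$J(R)$ nil." For the converse direction in (ii) — "$J(R)$ nil and $R/J(R)$ GSWNC $\Rightarrow$ $R$ GSWNC" — this is exactly (i) applied with $I=J(R)$. So the real content is (i), and I will prove that first.

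\textbf{Proof of (i).} Fix a nil ideal $I\subseteq R$ and write $\overline{R}=R/I$, $\overline{x}$ for the image of $x$. The key preliminary fact is that idempotents lift modulo the nil ideal $I$, and a standard companion fact is that units lift: since $I\subseteq J(R)$, an element $a\in R$ is a unit iff $\overline{a}$ is a unit in $\overline{R}$ (if $\overline{a}\,\overline{b}=\overline{1}$ then $ab\in 1+I\subseteq U(R)$, so $a$ has a right inverse, and symmetrically a left inverse). For the forward implication, suppose $R$ is GSWNC and take $\overline{a}\notin U(\overline{R})$; then $a\notin U(R)$, so $a=q\pm e$ with $q\in\mathrm{Nil}(R)$, $e\in\mathrm{Id}(R)$, $qe=eq$, and passing to $\overline{R}$ gives $\overline{a}=\overline{q}\pm\overline{e}$ with $\overline{q}$ nilpotent, $\overline{e}$ idempotent, commuting — so $\overline{a}$ is strongly weakly nil-clean and $\overline{R}$ is GSWNC.

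\textbf{The harder direction of (i).} Assume $\overline{R}$ is GSWNC; let $a\in R$ with $a\notin U(R)$, hence $\overline{a}\notin U(\overline{R})$. I would invoke Proposition~\ref{2.11}: it suffices to show $a\pm a^2\in\mathrm{Nil}(R)$ for some choice of sign. Applying Proposition~\ref{2.11} in $\overline{R}$ gives $\overline{a}\pm\overline{a}^2=\overline{a\pm a^2}\in\mathrm{Nil}(\overline{R})$, so $(a\pm a^2)^m\in I$ for some $m$; since $I$ is nil, $a\pm a^2$ is nilpotent in $R$. By the "only if" direction of Proposition~\ref{2.11} (the characterization is an equivalence, and the condition "for every non-unit $a$, $a\pm a^2\in\mathrm{Nil}(R)$" is intrinsic), this shows $R$ is GSWNC. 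I regard this as the cleanest route; alternatively one can lift an idempotent $e$ with $\overline{e}=\overline{a}-\overline{q}$ (using that idempotents lift mod $I$) and adjust by a nilpotent correction term lying in $I$, but verifying that the lifted idempotent still commutes with the corrected nilpotent is fiddly, whereas the Proposition~\ref{2.11} characterization sidesteps commutativity bookkeeping entirely. The main obstacle, such as it is, is simply making sure Proposition~\ref{2.11}(ii) is genuinely an ideal-theoretic/element-theoretic condition that transfers verbatim between $R$ and $R/I$ once we know nilpotents pull back along the nil ideal — which they do, precisely because $I$ is nil.

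\textbf{Proof of (ii).} The "only if" direction: if $R$ is GSWNC then $J(R)$ is nil by Lemma~\ref{2.10}, and $R/J(R)$ is GSWNC by part (i) applied to the nil ideal $J(R)$. The "if" direction: if $J(R)$ is nil and $R/J(R)$ is GSWNC, then part (i) with $I=J(R)$ immediately yields that $R$ is GSWNC. This completes the proof.
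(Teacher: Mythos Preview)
Your proof is correct and follows essentially the same approach as the paper: both use Proposition~\ref{2.11} (the $a\pm a^2\in\mathrm{Nil}(R)$ characterization) to transfer the GSWNC property along the nil ideal $I$, and both derive (ii) from (i) together with Lemma~\ref{2.10}. The only cosmetic difference is that for the forward direction of (i) you pass a strongly weakly nil-clean decomposition through the quotient directly, whereas the paper applies Proposition~\ref{2.11} in both directions; this is immaterial.
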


\begin{proof}
(i) Let $R$ is a GSWNC ring and $\overline{R} := R/I$, where $\bar{a} \notin U(\overline{R})$. Then, $a \notin U(R)$, which insures, in view of Proposition \ref{2.11}, that $a \pm a^2 \in {\rm Nil}(R)$, so $\bar{a} \pm \bar{a}^2 \in {\rm Nil}(\overline{R})$.

Conversely, suppose $\overline{R}$ is a GSWNC ring. If $a \notin U(R)$, then $\bar{a} \notin U(\overline{R})$, and Proposition \ref{2.11} yields that $\bar{a} \pm \bar{a}^2 \in {\rm Nil}(\overline{R})$. Therefore, there exists $k \in \mathbb{N}$ such that $(a \pm a^2)^k \subseteq I \subseteq {\rm Nil}(R)$.

(ii) Using Lemma \ref{2.10} and part (i) of the proof, the proof is clear.
\end{proof}

\begin{lemma}\label{2.14}
Let $R$ be a ring and $0\neq e\in {\rm Id}(R)$. If $R$ is GSWNC, then so is $eRe$.
\end{lemma}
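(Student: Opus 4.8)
The plan is to verify that corner rings $eRe$ inherit the GSWNC property by reducing everything to the criterion of Proposition~\ref{2.11}: a ring is GSWNC precisely when every non-unit $a$ satisfies $a \pm a^2 \in {\rm Nil}(R)$. So let $a \in eRe$ be a non-invertible element of the corner ring $eRe$ (whose identity is $e$). First I would observe that $a + (1-e)$ is a non-unit of $R$: if it had a two-sided inverse $u$ in $R$, then multiplying by $e$ on both sides and using $ea = ae = a$ and $e(1-e) = 0$ would produce an inverse of $a$ inside $eRe$, a contradiction. (One should be slightly careful here: $eue$ inverts $a$ in $eRe$ because $e = e \cdot 1 \cdot e = e u (a + (1-e)) e = (eue) a$ and symmetrically.) Hence $a + (1-e) \notin U(R)$.

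Next, apply Proposition~\ref{2.11} to the GSWNC ring $R$ and the non-unit $a + (1-e)$: we get $(a + (1-e)) \pm (a + (1-e))^2 \in {\rm Nil}(R)$ for one choice of sign. Since $a \in eRe$ and $1-e$ is a central-looking orthogonal idempotent with $a(1-e) = (1-e)a = 0$, the element $a + (1-e)$ decomposes as an element of the ring direct sum $eRe \oplus (1-e)R(1-e)$; squaring respects this decomposition, so $(a+(1-e))^2 = a^2 + (1-e)$ and therefore
\[
(a + (1-e)) + (a+(1-e))^2 = (a + a^2) + 2(1-e)
\]
or, in the minus case, $(a - a^2) + 0\cdot(1-e) = a - a^2$. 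In either case, multiplying the nilpotent element on the left and right by $e$ kills the $(1-e)$-component and leaves $e\big((a\pm a^2) + c(1-e)\big)e = a \pm a^2$, which is then nilpotent in $R$, hence nilpotent in $eRe$ (a nilpotent element lying in $eRe$ is nilpotent there with the same index, since powers agree). Thus $a \pm a^2 \in {\rm Nil}(eRe)$, and Proposition~\ref{2.11} applied in the reverse direction gives that $eRe$ is GSWNC.

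The main subtlety — really the only place any care is needed — is the very first step, namely confirming that adjoining $1-e$ to an element of the corner ring preserves non-invertibility; this is the standard Peirce-corner fact but it is what makes the whole argument go, since without it we could not feed $a$ into the hypothesis on $R$. Everything after that is the routine bookkeeping that $e(-)e$ is a ring retraction $R \to eRe$ carrying nilpotents to nilpotents and commuting with the polynomial $x \pm x^2$ on elements of $eRe$. Alternatively, and perhaps more cleanly, one can avoid the $1-e$ trick entirely by using the direct-product decomposition: if $f := 1-e \neq 0$, then $R$ GSWNC forces strong structural restrictions, but the $a+(1-e)$ device is the most direct route and works uniformly whether or not $e = 1$.
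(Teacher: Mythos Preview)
Your argument is correct and rests on the same tool as the paper (Proposition~\ref{2.11}), but you take an unnecessary detour through $a+(1-e)$. The paper simply observes that $a$ itself is already a non-unit of $R$: if $ab=ba=1$ in $R$ then $a(ebe)=(ebe)a=e$ in $eRe$, contradicting the assumption on $a$ (or, more bluntly, when $e\neq 1$ the element $a\in eRe$ annihilates $1-e\neq 0$ and so cannot be a unit of $R$). Applying Proposition~\ref{2.11} to $a$ directly then gives $a\pm a^2\in{\rm Nil}(R)\cap eRe\subseteq{\rm Nil}(eRe)$ in one line. Your version works, but you must then strip off the $(1-e)$-component, and the step ``multiply the nilpotent on both sides by $e$'' is not innocent: $e(\cdot)e$ does not send nilpotents to nilpotents in general, and you are implicitly relying on the orthogonal Peirce splitting you set up earlier to justify it here. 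Everything is valid, just longer than needed.
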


\begin{proof}
Let $a \in eRe$ where it is not unit in $eRe$. We have $a = ea = ae = eae$. If $a \in U(R)$, then there exists $b \in R$ such that $ab = ba = 1$, which implies $a(ebe) = (ebe)a = e$, a contradiction. Therefore, $a \notin U(R)$. Hence, by Proposition \ref{2.11}, we have $a \pm a^2 \in {\rm Nil}(R) \cap eRe \subseteq {\rm Nil}(eRe)$, as required.
\end{proof}

Let $R$ be a ring and $M$ a bi-module over $R$. The trivial extension of $R$ and $M$ is defined as
\[ T(R, M) = \{(r, m) : r \in R \text{ and } m \in M\}, \]
with addition defined componentwise and multiplication defined by
\[ (r, m)(s, n) = (rs, rn + ms). \]

\begin{corollary}\label{2.17}
Let $R$ be a ring and $M$ a bi-module over $R$. Then, the following are equivalent:
\begin{enumerate}
\item
$R$ is a GSWNC ring.
\item
$T(R, M)$ is a GSWNC ring.
\end{enumerate}
\end{corollary}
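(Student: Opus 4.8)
The plan is to use the trivial extension's natural nil ideal together with Proposition~\ref{2.13}(i). Recall that in $T(R,M)$ the subset $I=\{(0,m):m\in M\}$ is a two-sided ideal, and since $(0,m)(0,n)=(0,0)$ it satisfies $I^2=0$; in particular $I$ is a nil ideal of $T(R,M)$. Moreover $T(R,M)/I\cong R$ via the map $(r,m)+I\mapsto r$, which is a ring isomorphism. So the equivalence of (i) and (ii) is immediate once we quote Proposition~\ref{2.13}(i): $T(R,M)$ is GSWNC if and only if $T(R,M)/I\cong R$ is GSWNC.

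Concretely, first I would record that $I=0\oplus M$ is an ideal with $I^2=0$, hence nil. Second, I would note the canonical projection $\pi\colon T(R,M)\to R$, $(r,m)\mapsto r$, is a surjective ring homomorphism with kernel $I$, so $T(R,M)/I\cong R$. Third, apply Proposition~\ref{2.13}(i) with this nil ideal $I$ to conclude $T(R,M)$ is GSWNC $\iff$ $R$ is GSWNC. That finishes both directions simultaneously.

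There is essentially no obstacle here: the only thing to be a little careful about is that Proposition~\ref{2.13}(i) is stated for a ring modulo a nil ideal, so I must verify that $I$ is genuinely nil (which is trivial from $I^2=0$) and that the quotient is literally isomorphic to $R$ (which is the standard fact about trivial extensions). If one preferred a self-contained argument avoiding Proposition~\ref{2.13}, one could instead invoke Proposition~\ref{2.11}: an element $(r,m)\in T(R,M)$ is a unit iff $r\in U(R)$, and a direct computation shows $(r,m)\pm(r,m)^2=(r\pm r^2,\,\ast)$ lies in $\mathrm{Nil}(T(R,M))$ iff $r\pm r^2\in\mathrm{Nil}(R)$, using again that the $M$-component is killed by a suitable power because $I^2=0$. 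But the cleanest route is simply to cite Proposition~\ref{2.13}(i), so that is what I would write.
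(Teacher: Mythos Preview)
Your proposal is correct and follows essentially the same approach as the paper: set $I=\{(0,m):m\in M\}$, observe $I^2=0$ so $I$ is a nil ideal with $T(R,M)/I\cong R$, and then apply Proposition~\ref{2.13}(i). The paper's proof is exactly this, only stated more tersely.
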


\begin{proof}
Set $A={\rm T}(R, M)$ and consider $I:={\rm T}(0, M)$. It is not too hard to verify that $I$ is a nil-ideal of $A$ such that $\dfrac{A}{I} \cong R$. So, the result follows directly from Proposition \ref{2.13}.
\end{proof}

Let $\alpha$ be an endomorphism of $R$ and $n$ a positive integer. It was defined by Nasr-Isfahani in \cite{3} the {\it skew triangular matrix ring} like this:
$${\rm T}_{n}(R,\alpha )=\left\{ \left. \begin{pmatrix}
	a_{0} & a_{1} & a_{2} & \cdots & a_{n-1} \\
	0 & a_{0} & a_{1} & \cdots & a_{n-2} \\
	0 & 0 & a_{0} & \cdots & a_{n-3} \\
	\ddots & \ddots & \ddots & \vdots & \ddots \\
	0 & 0 & 0 & \cdots & a_{0}
\end{pmatrix} \right| a_{i}\in R \right\}$$
with addition point-wise and multiplication given by:
\begin{align*}
&\begin{pmatrix}
		a_{0} & a_{1} & a_{2} & \cdots & a_{n-1} \\
		0 & a_{0} & a_{1} & \cdots & a_{n-2} \\
		0 & 0 & a_{0} & \cdots & a_{n-3} \\
		\ddots & \ddots & \ddots & \vdots & \ddots \\
		0 & 0 & 0 & \cdots & a_{0}
	\end{pmatrix}\begin{pmatrix}
		b_{0} & b_{1} & b_{2} & \cdots & b_{n-1} \\
		0 & b_{0} & b_{1} & \cdots & b_{n-2} \\
		0 & 0 & b_{0} & \cdots & b_{n-3} \\
		\ddots & \ddots & \ddots & \vdots & \ddots \\
		0 & 0 & 0 & \cdots & b_{0}
	\end{pmatrix}  =\\
	& \begin{pmatrix}
		c_{0} & c_{1} & c_{2} & \cdots & c_{n-1} \\
		0 & c_{0} & c_{1} & \cdots & c_{n-2} \\
		0 & 0 & c_{0} & \cdots & c_{n-3} \\
		\ddots & \ddots & \ddots & \vdots & \ddots \\
		0 & 0 & 0 & \cdots & c_{0}
\end{pmatrix},
\end{align*}
where $$c_{i}=a_{0}\alpha^{0}(b_{i})+a_{1}\alpha^{1}(b_{i-1})+\cdots +a_{i}\alpha^{i}(b_{i}),~~ 1\leq i\leq n-1
.$$ We denote the elements of ${\rm T}_{n}(R, \alpha)$ by $(a_{0},a_{1},\ldots , a_{n-1})$. If $\alpha $ is the identity endomorphism, then ${\rm T}_{n}(R,\alpha )$ is a subring of upper triangular matrix ring ${\rm T}_{n}(R)$.

\begin{corollary}\label{2.20}
Let $R$ be a ring. Then, the following are equivalent:
\begin{enumerate}
\item
$R$ is a GSWNC ring.
\item
${\rm T}_{n}(R,\alpha )$ is a GSWNC ring.
\end{enumerate}
\end{corollary}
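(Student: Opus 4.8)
The plan is to exhibit a nil-ideal $I$ of ${\rm T}_n(R,\alpha)$ whose quotient is isomorphic to $R$, and then invoke Proposition \ref{2.13}(i) exactly as was done in Corollary \ref{2.17} for the trivial extension. Concretely, I would let
\[ I = \{(0, a_1, a_2, \ldots, a_{n-1}) : a_i \in R\}, \]
the set of skew triangular matrices with zero diagonal. First I would check that $I$ is a two-sided ideal of ${\rm T}_n(R,\alpha)$: this is immediate from the multiplication formula, since the product of any element $(b_0, b_1, \ldots)$ with an element of $I$ has zero $(0,0)$-entry, the entry $c_0$ being $b_0\alpha^0(0) = 0$ on one side and $0\cdot\alpha^0(b_0)=0$ on the other. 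Next I would observe that $I$ is nilpotent — indeed $I^n = 0$, because an element of $I$ is a strictly upper triangular $n\times n$ matrix (the skew structure does not affect the vanishing pattern), and the product of $n$ strictly upper triangular $n\times n$ matrices is $0$; in particular $I$ is nil.

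Then I would describe the quotient map $\pi : {\rm T}_n(R,\alpha) \to R$ sending $(a_0, a_1, \ldots, a_{n-1}) \mapsto a_0$, i.e. projection onto the diagonal entry. This is a ring homomorphism because, in the multiplication formula, $c_0 = a_0\alpha^0(b_0) = a_0 b_0$ (noting $\alpha^0 = \mathrm{id}$), so the diagonal entry of a product is the product of the diagonal entries; additivity is clear. It is surjective, with kernel exactly $I$, so ${\rm T}_n(R,\alpha)/I \cong R$. Applying Proposition \ref{2.13}(i) with this nil-ideal $I$ gives that ${\rm T}_n(R,\alpha)$ is GSWNC if, and only if, $R$ is GSWNC, which is precisely the asserted equivalence.

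I do not anticipate a genuine obstacle here; the argument is a routine adaptation of Corollary \ref{2.17}. The only point requiring a small amount of care is verifying that $I$ really is a two-sided ideal and that $\pi$ really is multiplicative in the skew setting — one must use that $\alpha^0$ is the identity so that the $c_0$-entry behaves like ordinary multiplication, and that all the "correction" terms involving $\alpha^1, \ldots, \alpha^i$ only appear in the strictly-upper entries $c_1, \ldots, c_{n-1}$, hence land in $I$. Once that bookkeeping is done, the nilpotence of $I$ and the isomorphism ${\rm T}_n(R,\alpha)/I \cong R$ are formal, and the conclusion follows directly from Proposition \ref{2.13}.
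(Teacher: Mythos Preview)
Your proposal is correct and follows essentially the same approach as the paper: both take $I$ to be the set of elements of ${\rm T}_n(R,\alpha)$ with zero diagonal entry, observe that $I^n=0$ and ${\rm T}_n(R,\alpha)/I\cong R$, and then invoke Proposition~\ref{2.13}(i). Your write-up simply supplies more of the verification details (that $I$ is a two-sided ideal and that the projection to the diagonal is multiplicative because $\alpha^0=\mathrm{id}$) than the paper, which dismisses these as ``easily verified.''
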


\begin{proof}
Choose
$$I:=\left\{
\left.
\begin{pmatrix}
		0 & a_{12} & \ldots & a_{1n} \\
		0 & 0 & \ldots & a_{2n} \\
		\vdots & \vdots & \ddots & \vdots \\
		0 & 0 & \ldots & 0
	\end{pmatrix} \right| a_{ij}\in R \quad (i\leq j )
	\right\}.$$
Then, one easily verifies that $I^{n}=0$ and $\dfrac{{\rm T}_{n}(R,\alpha )}{I} \cong R$. Consequently, Proposition \ref{2.13} applies to get the wanted result.
\end{proof}

Let $\alpha$ be an endomorphism of $R$. We denote by $R[x,\alpha ]$ the {\it skew polynomial ring} whose elements are the polynomials over $R$, the addition is defined as usual, and the multiplication is defined by the equality $xr=\alpha (r)x$ for any $r\in R$. Thus, there is a ring isomorphism $$\varphi : \dfrac{R[x,\alpha]}{\langle x^{n}\rangle }\rightarrow {\rm T}_{n}(R,\alpha),$$ given by $$\varphi (a_{0}+a_{1}x+\ldots +a_{n-1}x^{n-1}+\langle x^{n} \rangle )=(a_{0},a_{1},\ldots ,a_{n-1})$$ with $a_{i}\in R$, $0\leq i\leq n-1$. So, one finds that ${\rm T}_{n}(R,\alpha )\cong \dfrac{R[x,\alpha ]}{\langle  x^{n}\rangle}$, where $\langle x^{n}\rangle$ is the ideal generated by $x^{n}$. Also $R[[x, \alpha]]$ denotes the ring of skew formal power series over $R$; that is all formal power series in $x$ with coefficients from $R$ with multiplication defined by $xr = \alpha(r)x$ for all $r \in R$. On the other hand we know that, $\dfrac{R[x,\alpha ]}{\langle x^{n}\rangle}\cong \dfrac{R[[x,\alpha ]]}{\langle x^{n}\rangle}$.
We, thus, extract the following claim.

\begin{corollary}\label{2.21}
Let $R$ be a ring with an endomorphism $\alpha$ such that $\alpha (1)=1$. Then, the following are equivalent:
\begin{enumerate}
\item
$R$ is a GSWNC ring.
\item
$\dfrac{R[x,\alpha ]}{\langle x^{n}\rangle }$ is a GSWNC ring.
\item
$\dfrac{R[[x,\alpha ]]}{\langle x^{n}\rangle }$ is a GSWNC ring.
\end{enumerate}
\end{corollary}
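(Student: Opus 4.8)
The plan is to reduce everything to Proposition~\ref{2.13} exactly as was done in Corollaries~\ref{2.20} and \ref{2.17}. The key structural fact is that the three rings in the statement are tied together by the two isomorphisms established in the paragraph preceding the corollary: namely $\dfrac{R[x,\alpha]}{\langle x^n\rangle}\cong {\rm T}_n(R,\alpha)$ via $\varphi$, and $\dfrac{R[x,\alpha]}{\langle x^n\rangle}\cong \dfrac{R[[x,\alpha]]}{\langle x^n\rangle}$. So the equivalence of (ii) and (iii) is immediate, since GSWNC is obviously preserved under ring isomorphism (a routine check: isomorphisms carry units to units, nilpotents to nilpotents, idempotents to idempotents, and preserve the relevant sums and commutation).

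The remaining work is the equivalence of (i) with (ii). First I would observe that, under $\varphi$, the ideal $\langle x\rangle/\langle x^n\rangle$ of $\dfrac{R[x,\alpha]}{\langle x^n\rangle}$ corresponds precisely to the ideal $I$ of ${\rm T}_n(R,\alpha)$ consisting of the strictly-upper-triangular skew matrices, which was shown in the proof of Corollary~\ref{2.20} to satisfy $I^n=0$ and $\dfrac{{\rm T}_n(R,\alpha)}{I}\cong R$. Transporting this back, $\dfrac{R[x,\alpha]}{\langle x^n\rangle}$ has a nilpotent ideal $\langle x\rangle/\langle x^n\rangle$ whose quotient is isomorphic to $R$. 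Then Proposition~\ref{2.13}(i) applies directly: a ring is GSWNC if and only if its quotient by a nil ideal is GSWNC. Hence $\dfrac{R[x,\alpha]}{\langle x^n\rangle}$ is GSWNC if and only if $R$ is, giving (i)$\Leftrightarrow$(ii). Chaining with the isomorphism to the skew formal power series quotient closes the cycle.

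I do not anticipate a genuine obstacle here; the corollary is an almost mechanical consequence of the machinery already in place, and the hypothesis $\alpha(1)=1$ is exactly what is needed to guarantee that $R[x,\alpha]$ (and $R[[x,\alpha]]$) is a unital ring containing $R$ as a unital subring, so that the isomorphisms and the identification of $I$ with a genuine ideal go through. The only point requiring the smallest amount of care is making sure that the ideal $\langle x\rangle/\langle x^n\rangle$ really is nilpotent — but this follows because $x^n=0$ in the quotient and $\langle x\rangle/\langle x^n\rangle$ is generated as an ideal by the image of $x$, so its $n$-th power vanishes, mirroring the $I^n=0$ computation of Corollary~\ref{2.20}. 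With that in hand the proof is a one-line appeal to Proposition~\ref{2.13} together with transport of structure along the two displayed isomorphisms.
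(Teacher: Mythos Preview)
Your proposal is correct and follows exactly the route the paper takes: the corollary is stated without proof as an immediate consequence of the isomorphisms $\dfrac{R[x,\alpha]}{\langle x^n\rangle}\cong {\rm T}_n(R,\alpha)$ and $\dfrac{R[x,\alpha]}{\langle x^n\rangle}\cong \dfrac{R[[x,\alpha]]}{\langle x^n\rangle}$ together with Corollary~\ref{2.20} (which in turn rests on Proposition~\ref{2.13}). If anything, you have spelled out more detail than the paper does.
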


\begin{corollary}
Let $R$ be a ring. Then, the following are equivalent:
\begin{enumerate}
\item
$R$ is a GSWNC ring.
\item
$\dfrac{R[x]}{\langle x^{n}\rangle }$ is a GSWNC ring.
\item
$\dfrac{R[[x]]}{\langle x^{n}\rangle }$ is a GSWNC ring.
\end{enumerate}
\end{corollary}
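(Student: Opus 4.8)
The plan is to deduce this corollary as the $\alpha = \mathrm{id}$ special case of Corollary~\ref{2.21}, together with the observation that ordinary polynomial and power series rings are exactly skew polynomial and skew power series rings with the identity endomorphism. First I would note that the identity map $\mathrm{id}\colon R \to R$ is an endomorphism with $\mathrm{id}(1) = 1$, so Corollary~\ref{2.21} applies verbatim with $\alpha = \mathrm{id}$. Then $R[x, \mathrm{id}] = R[x]$ and $R[[x, \mathrm{id}]] = R[[x]]$ as rings, because the defining relation $xr = \alpha(r)x$ becomes $xr = rx$, i.e. the usual commuting-indeterminate polynomial (resp. power series) ring. Consequently $R[x]/\langle x^n\rangle \cong R[x,\mathrm{id}]/\langle x^n\rangle$ and $R[[x]]/\langle x^n\rangle \cong R[[x,\mathrm{id}]]/\langle x^n\rangle$.

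With these identifications in hand, the three conditions of the present corollary are literally the three conditions of Corollary~\ref{2.21} instantiated at $\alpha = \mathrm{id}$, so the equivalences (i)\,$\Leftrightarrow$\,(ii)\,$\Leftrightarrow$\,(iii) transfer immediately. Alternatively, for a self-contained argument one could bypass the skew machinery: set $I = \langle \bar x\rangle$ inside $R[x]/\langle x^n\rangle$, check that $I^n = 0$ so $I$ is a nil ideal, observe $\bigl(R[x]/\langle x^n\rangle\bigr)/I \cong R$, and apply Proposition~\ref{2.13}(i); the power series case is handled the same way using $R[[x]]/\langle x^n\rangle \cong R[x]/\langle x^n\rangle$.

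I do not expect any genuine obstacle here: the statement is purely a specialization, and the only thing to be careful about is recording the (trivial) ring isomorphisms $R[x,\mathrm{id}] = R[x]$ and $R[[x,\mathrm{id}]] = R[[x]]$ cleanly, so that Corollary~\ref{2.21} can be quoted without friction. The proof will therefore be one or two sentences invoking Corollary~\ref{2.21} (or, equivalently, Proposition~\ref{2.13}) after this identification.

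\begin{proof}
Apply Corollary~\ref{2.21} with $\alpha = \mathrm{id}_R$, which is an endomorphism satisfying $\alpha(1) = 1$. Since the skew relation $xr = \alpha(r)x$ reduces to $xr = rx$ when $\alpha = \mathrm{id}_R$, we have $R[x, \mathrm{id}_R] = R[x]$ and $R[[x, \mathrm{id}_R]] = R[[x]]$, hence $R[x]/\langle x^n\rangle \cong R[x,\mathrm{id}_R]/\langle x^n\rangle$ and $R[[x]]/\langle x^n\rangle \cong R[[x,\mathrm{id}_R]]/\langle x^n\rangle$. The three stated conditions are thus the three conditions of Corollary~\ref{2.21} for this choice of $\alpha$, and the equivalences follow. (Equivalently, one may argue directly: the ideal $I = \langle \bar x\rangle$ of $R[x]/\langle x^n\rangle$ satisfies $I^n = 0$ and $\bigl(R[x]/\langle x^n\rangle\bigr)/I \cong R$, so Proposition~\ref{2.13}(i) gives (i)\,$\Leftrightarrow$\,(ii), and (ii)\,$\Leftrightarrow$\,(iii) follows from $R[[x]]/\langle x^n\rangle \cong R[x]/\langle x^n\rangle$.)
\end{proof}
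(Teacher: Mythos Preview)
Your proposal is correct and matches the paper's approach: the paper states this corollary immediately after Corollary~\ref{2.21} with no separate proof, clearly intending it as the special case $\alpha = \mathrm{id}_R$, which is exactly what you do. Your alternative direct argument via Proposition~\ref{2.13}(i) is also fine and simply unwinds the same chain of reasoning.
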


\begin{corollary}\label{2.57}
Let
$ R $
be a ring, and let
\begin{center}
$S_{n}(R)=\left\lbrace (a_{ij})\in T_{n}(R)\, | \, a_{11}=a_{22}=\cdots=a_{nn}\right\rbrace.$
\end{center}
Then, the following are equivalent:
\begin{enumerate}
\item	
$R$ is a GSWNC ring.
\item	
$S_{n}(R)$ is a GSWNC ring.
\end{enumerate}
\end{corollary}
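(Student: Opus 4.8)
The plan is to follow the template of Corollaries~\ref{2.20} and \ref{2.21}: exhibit a nil two-sided ideal $I$ of $S_n(R)$ with $S_n(R)/I \cong R$, and then invoke Proposition~\ref{2.13}(i).

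First I would set
\[ I := \left\{ (a_{ij}) \in S_n(R) \mid a_{11} = a_{22} = \cdots = a_{nn} = 0 \right\}, \]
that is, the strictly upper triangular matrices viewed inside the subring $S_n(R) \subseteq {\rm T}_n(R)$. This is the same collection of matrices as the ideal $I$ appearing in the proof of Corollary~\ref{2.20}, so the identical computation gives $I^n = 0$; in particular $I$ is a nilpotent, hence nil, two-sided ideal of $S_n(R)$. The point to check is that $I$ is closed under left and right multiplication by elements of $S_n(R)$ (not merely of ${\rm T}_n(R)$), which is immediate since the product of any upper triangular matrix with a strictly upper triangular one is strictly upper triangular.

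Next I would define $\pi : S_n(R) \to R$ by sending a matrix to its common diagonal entry, $\pi((a_{ij})) = a_{11}$. Additivity is clear, and multiplicativity follows because the $(1,1)$-entry of a product of upper triangular matrices is the product of their $(1,1)$-entries; $\pi$ is plainly surjective (scalar matrices map onto $R$), and $\ker \pi = I$ by construction. Hence $S_n(R)/I \cong R$.

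Finally, applying Proposition~\ref{2.13}(i) to the nil ideal $I$ shows that $R \cong S_n(R)/I$ is GSWNC if, and only if, $S_n(R)$ is GSWNC, which is exactly the claimed equivalence. I do not expect a genuine obstacle here; the only steps needing (routine) attention are confirming that $I$ is a two-sided ideal of the \emph{subring} $S_n(R)$ and that $\pi$ is a ring homomorphism, both of which are transparent from the shape of constant-diagonal upper triangular matrices.
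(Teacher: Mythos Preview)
Your proposal is correct and follows exactly the paper's own argument: the paper also takes $I=\{(a_{ij})\in S_n(R): a_{11}=0\}$, notes that $I$ is a nil ideal of $S_n(R)$ with $S_n(R)/I\cong R$, and then (implicitly) applies Proposition~\ref{2.13}(i). Your write-up simply fills in the routine verifications that the paper leaves to the reader.
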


\begin{proof}	
We assume $I=\{(a_{ij}) \in S_n(R) : a_{11}=0\}$, so evidently $I$ is a nil-ideal of $S_n(R)$ and $S_n(R)/I \cong R$.
\end{proof}

\begin{example}\label{2.24}
Let $R={\rm M}_2(\mathbb{Z}_2)$. It can be shown by simple calculation $$R = U(R) \cup {\rm Id}(R) \cup {\rm Nil}(R).$$ Thus, $R$ is a GSWNC ring, but is not strongly weakly nil-clean by \cite [Theorem 3.2]{1}.
\end{example}

\begin{example}
The ring ${\rm M}_2(\mathbb{Z}_{2^k})$ is a GSWNC ring for each $k \in \mathbb{N}$.
\end{example}

We now arrive at our first principal statement.

\begin{theorem}\label{2.25}
For any ring $R \neq 0$ and any integer $n \ge 3$, the ring ${\rm M}_n(R)$ is not GSWNC.
\end{theorem}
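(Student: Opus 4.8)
The plan is to find, for each $n \ge 3$, a single non-invertible matrix in ${\rm M}_n(R)$ that fails to be strongly weakly nil-clean, thereby contradicting the GSWNC hypothesis. The natural first step is to reduce to the case where $R$ is "nice". By Proposition~\ref{2.13}, if ${\rm M}_n(R)$ were GSWNC then $J({\rm M}_n(R)) = {\rm M}_n(J(R))$ is nil and ${\rm M}_n(R/J(R)) \cong {\rm M}_n(R)/J({\rm M}_n(R))$ is GSWNC; moreover by Lemma~\ref{2.10} (or Corollary~\ref{2.12}), $R$ itself would be strongly $\pi$-regular with nil Jacobson radical. So it is enough to derive a contradiction when $R$ is semisimple, and in fact, using corner rings (Lemma~\ref{2.14}) and standard structure theory, it should suffice to handle $R = D$ a division ring, or even just to produce a bad matrix directly over an arbitrary nonzero $R$ by working modulo a maximal left ideal.

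The core of the argument is to exhibit the obstruction for $n = 3$ and then lift it to larger $n$ by a corner/block embedding. Over any nonzero ring, pick a maximal left ideal and reduce; concretely, I would look at a $3 \times 3$ matrix built from a unit-like and an idempotent-like block that is engineered so that $A + A^2$ and $A - A^2$ are both non-nilpotent. A clean candidate is a permutation-type or companion-type matrix: for instance something resembling $A = e_{11} + e_{12} + e_{23} + e_{31}$-style combinations, chosen so that $A$ is not invertible (its reduction mod the maximal ideal has nontrivial kernel) yet $A \pm A^2$ has an eigenvalue, after reduction, lying outside $\{0\}$ in a way incompatible with nilpotence — recalling that Proposition~\ref{2.11} says GSWNC is equivalent to $a \pm a^2 \in {\rm Nil}(R)$ for every non-unit $a$. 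The key computation is then: over a division ring $D$, show that the matrix $A \in {\rm M}_3(D)$ we choose satisfies $A \notin U({\rm M}_3(D))$ but neither $A + A^2$ nor $A - A^2$ is nilpotent, e.g. by computing characteristic polynomials (or, in the non-commutative case, by exhibiting a nonzero fixed or periodic vector). For $n > 3$, take $A \oplus 1_{n-3}$ — wait, that is a unit issue — rather $A \oplus 0_{n-3}$, which is still non-invertible, and the block-diagonal structure forces $(A \oplus 0) \pm (A \oplus 0)^2 = (A \pm A^2) \oplus 0$, which is nilpotent only if $A \pm A^2$ is, so the $n=3$ obstruction propagates.

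The main obstacle I anticipate is the non-commutativity: over a general ring, or even a non-commutative division ring, "eigenvalue" and "characteristic polynomial" arguments are delicate, and nilpotence of a $3\times 3$ matrix is not simply read off from a determinant. I expect to circumvent this by arguing with left modules: a matrix $N \in {\rm M}_n(R)$ over a ring with nil Jacobson radical is nilpotent iff $\bar N$ is nilpotent in ${\rm M}_n(R/J(R))$, and nilpotence of $\bar N$ over a semisimple (hence, after decomposition, over each simple) factor amounts to $\bar N$ acting nilpotently on the column module $D^n$. So the computation reduces to linear algebra over a division ring: pick $A$ so that $A\pm A^2$ each admit a nonzero non-wandering vector. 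A convenient explicit choice is $A = I - J$ where $J$ is a single Jordan block with eigenvalue $0$ adjusted, or the matrix with $1$'s making $A^2 = A$ impossible to cancel; one workable option is to take $A$ with $A+A^2$ invertible \emph{and} $A - A^2$ invertible simultaneously over $D$ for $n \ge 3$ (possible since $t + t^2$ and $t - t^2$ share no common root other than $t=0$, so a matrix with no eigenvalue in $\{0, 1, -1\}$ but still singular — impossible over a field for $1\times 1$, but for $3 \times 3$ one can have a singular matrix whose \emph{other} invariant factors avoid these bad values). Verifying that such an $A$ exists and is non-invertible while $A \pm A^2$ are units — equivalently not nilpotent — is the one genuine calculation, and I would carry it out with an explicit small matrix rather than abstractly.
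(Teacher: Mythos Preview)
Your core strategy matches the paper's: invoke Proposition~\ref{2.11}, reduce $n\ge 3$ to $n=3$ (the paper uses Lemma~\ref{2.14}; your $A\oplus 0_{n-3}$ trick is equivalent), and exhibit a single non-unit $A\in{\rm M}_3(R)$ for which neither $A+A^2$ nor $A-A^2$ is nilpotent. But you overcomplicate the execution and introduce an actual error at the end. The reduction to division rings via $J(R)$ and structure theory is unnecessary: the paper simply writes down the integer matrix
\[
A=\begin{pmatrix}1&1&0\\1&0&0\\0&0&0\end{pmatrix}
\]
over an arbitrary nonzero $R$. Then $A-A^2=-\mathrm{diag}(1,1,0)$, whose square is $\mathrm{diag}(1,1,0)\neq 0$, and $A+A^2$ has top-left $2\times 2$ block $\left(\begin{smallmatrix}3&2\\2&1\end{smallmatrix}\right)$ of determinant $-1$, hence invertible over every ring, so $A+A^2$ cannot be nilpotent. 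No semisimplicity, no module theory, no eigenvalues.

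Your final paragraph contains a genuine mistake: you seek a singular $A$ with $A\pm A^2$ both \emph{units}, and you write ``units --- equivalently not nilpotent''. These are not equivalent, and the former is impossible here: if $A$ is singular over a division ring then $Av=0$ for some $v\neq 0$, whence $(A\pm A^2)v=0$, so $A\pm A^2$ is never a unit. What you need, and what suffices, is merely that $A\pm A^2$ be \emph{non-nilpotent}. Your eigenvalue heuristic (arrange the nonzero eigenvalues of $A$ to avoid $\{1,-1\}$) then does point in the right direction, but it requires care over $\mathbb{Z}_2$, where no such eigenvalues exist in the ground field; the paper's choice sidesteps this because the $2\times 2$ block has characteristic polynomial $t^2-t-1$, irreducible over every prime field, making the verification uniform and entirely elementary.
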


\begin{proof}
It suffices to show that ${\rm M}_3(R)$ is {\it not} a GSWNC ring by virtue of Lemma \ref{2.14}. To this target, consider the matrix
	$$A =\begin{pmatrix}
		1 & 1 & 0 \\
		1 & 0 & 0 \\
		0 & 0 & 0
\end{pmatrix} \notin U({\rm M}_3(R)).$$ Then we have, $$A \pm A^2 \notin {\rm Nil}({\rm M}_3(R)).$$ Therefore, Proposition \ref{2.11} guarantees that $R$ cannot be a GSWNC ring.
\end{proof}

Two consequences are these:

\begin{corollary}\label{2.35}
Let $R$ be a GSWNC ring. Then, for any $n>2$, there does not exist $0\neq e\in {\rm Id}(R)$ such that $eRe\cong {\rm M}_{n}(S)$ for some ring $S$.
\end{corollary}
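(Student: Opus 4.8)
The plan is to deduce this directly from Theorem~\ref{2.25} together with Lemma~\ref{2.14}. Suppose, for contradiction, that $R$ is GSWNC and that for some integer $n>2$ there is a nonzero idempotent $e\in{\rm Id}(R)$ with $eRe\cong{\rm M}_n(S)$ for some ring $S$. Since $e\neq 0$, Lemma~\ref{2.14} tells us that the corner ring $eRe$ is again GSWNC. Transporting this property across the ring isomorphism $eRe\cong{\rm M}_n(S)$, we conclude that ${\rm M}_n(S)$ is GSWNC.

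Next I would observe that $S\neq 0$: indeed, ${\rm M}_n(S)$ contains the identity of $eRe$, which is nonzero because $e\neq 0$, so $S$ cannot be the zero ring. Now $n\ge 3$ and $S\neq 0$, so Theorem~\ref{2.25} applies and yields that ${\rm M}_n(S)$ is \emph{not} GSWNC. This contradicts the conclusion of the previous paragraph, and the corollary follows.

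The only point requiring a sliver of care is the transfer of the GSWNC property along a ring isomorphism, but this is immediate: being GSWNC is defined purely in terms of the multiplicative and additive structure (non-invertible elements admitting a strongly weakly nil-clean decomposition), and all of invertibility, nilpotency, idempotency, and the commuting condition are preserved by any ring isomorphism. Hence there is no real obstacle here; the statement is essentially a packaging of Theorem~\ref{2.25} with the corner-ring stability from Lemma~\ref{2.14}, and the proof is a two-line argument.

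(If one wished to avoid even invoking the isomorphism explicitly, an alternative is to note that $eRe$ inherits GSWNC by Lemma~\ref{2.14}, and then apply Theorem~\ref{2.25} in the form ``${\rm M}_n(S)$ is not GSWNC for $n\ge 3$, $S\neq 0$'' to the ring $S$, reaching the same contradiction. Either route is equally short, and I expect no step to present a genuine difficulty.)
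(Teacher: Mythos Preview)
Your proof is correct and follows essentially the same approach as the paper: argue by contradiction, invoke Lemma~\ref{2.14} to get that $eRe$ is GSWNC, transport along the isomorphism to ${\rm M}_n(S)$, and then contradict Theorem~\ref{2.25}. Your extra remark that $S\neq 0$ (needed for Theorem~\ref{2.25} to apply) is a small point the paper leaves implicit, but otherwise the arguments coincide.
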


\begin{proof}
Let us assume the opposite, namely that there exists $0\neq e\in {\rm Id}(R)$ such that $eRe\cong {\rm M}_{n}(S)$ for some ring $S$. Since $R$ is, by assumption, GSWNC, it follows from Lemma \ref{2.14} that the corner subring $eRe$ is GSWNC too, and hence ${\rm M}_{n}(S)$ is GSWNC as well. This, however, is a contradiction with Theorem \ref{2.25}.
\end{proof}

Recall that a set $\{e_{ij} : 1 \le i, j \le n\}$ of non-zero elements of $R$ is said to be a {\it system of $n^2$ matrix units} if $e_{ij}e_{st} = \delta_{js}e_{it}$, where $\delta_{jj} = 1$ and $\delta_{js} = 0$ for $j \neq s$. In this case, $e := \sum_{i=1}^{n} e_{ii}$ is an idempotent of $R$ and $eRe \cong {\rm M}_n(S)$, where $$S = \{r \in eRe : re_{ij} = e_{ij}r, \text{for all} i, j = 1, 2, . . . , n\}.$$
A ring $R$ is said to be Dedekind-finite if $ab=1$ give result that $ba=1$ for any $a$ and $b$ in $R$. In other words, all one-sided inverses in the ring are two-sided.

\begin{corollary}\label{2.36}
Every GSWNC ring is Dedekind-finite.
\end{corollary}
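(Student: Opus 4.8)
The plan is to exploit the fact, established in Corollary~\ref{2.12}, that every GSWNC ring is strongly $\pi$-regular, together with the well-known fact that strongly $\pi$-regular rings are Dedekind-finite. Concretely, I would argue as follows. Suppose $R$ is GSWNC and $ab=1$ for some $a,b\in R$; I must show $ba=1$. Set $e:=ba$ and note that $e$ is an idempotent, since $e^2 = b(ab)a = ba = e$. The goal is therefore to prove $e=1$, equivalently $1-e=1-ba\in\{0\}$.

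The key step is to apply strong $\pi$-regularity to the element $a$ (or, symmetrically, to $b$). Since $R$ is strongly $\pi$-regular, there is $m\in\mathbb{N}$ and $r\in R$ with $a^m = a^{m+1}r$. Multiplying this equation on the left by $b^m$ and using $b^m a^m = (ba)\cdots$ carefully — in fact $b^k a^k = 1$ follows inductively from $ab=1$, because $b^{k}a^{k} = b^{k-1}(ba)a^{k-1}$ is not immediately $1$, so instead one uses $b(ab)=b$ hence $b^m a^m$ telescopes via $a^m b^m$: from $ab=1$ we get $a^m b^m = 1$ directly (each inner $ab$ collapses), and then $a^m = a^{m+1}r = a^{m+1} r$ gives, after left-multiplying by $b^{m+1}$ and using $b^{m+1}a^{m+1}=?$ — here I would instead right-multiply $a^m = a^{m+1}r$ by $b^m$ to obtain $a^m b^m = a^{m+1} r b^m$, i.e. $1 = a\,(a^m r b^m)$, and then left-multiply by $b$ to get $b = a^m r b^m$. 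Feeding this back, $a b = a\cdot a^m r b^m = a^{m+1} r b^m = a^m b^m = 1$, which is consistent but not yet the conclusion; the cleaner route is the standard one: in a strongly $\pi$-regular ring, for the element $a$ one has $a^m = a^{m+1}r$ with $r$ commuting appropriately so that $a^m r^m$ is an idempotent $f$ with $a = a f + a(1-f)$ and $a(1-f)$ nilpotent while $af$ is a unit in $fRf$; since $a$ is already left-invertible it must then be a unit outright, forcing $ba=1$.

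Rather than reproving Dedekind-finiteness of strongly $\pi$-regular rings from scratch, the efficient plan is to invoke Corollary~\ref{2.12} and then cite the classical theorem (due to Azumaya, or Dischinger) that every strongly $\pi$-regular ring is Dedekind-finite. Alternatively — and this keeps the paper self-contained using only machinery already developed — I would proceed directly: with $ab=1$ and $e=ba$ idempotent, consider $1-e$. Then $1-e$ is not a unit (it is a proper idempotent unless it is $0$), so by Proposition~\ref{2.11}, $(1-e)\pm(1-e)^2 = (1-e)\pm(1-e)$ lies in $\mathrm{Nil}(R)$; taking the sign giving $2(1-e)$ is unhelpful, but taking the difference gives $0\in\mathrm{Nil}(R)$, which is vacuous — so this particular element is too degenerate. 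The genuine obstacle is precisely this: idempotents are exactly the fixed points of $x\mapsto x^2$, so Proposition~\ref{2.11} gives no information about them, and one really must feed a non-idempotent, non-unit element built from $a$ into the machinery.

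The cleanest fix, which I expect to be the actual argument, is: apply strong $\pi$-regularity (Corollary~\ref{2.12}) to $a$, obtaining $n$ with $a^n R = a^{n+1} R$; since $ab = 1$ forces $a^n b^n = 1$ so $a^n$ is left-invertible and $a^n R = R$, we get $R = a^{n+1}R$, i.e. $a^{n+1}$ is also right-invertible, say $a^{n+1} c = 1$. Now $a^n = a^{n+1}(a^{n}b^{n}) \cdots$; more directly, from $a^{n+1}c = 1$ and $a^n b^n = 1$ we get $a\cdot(a^n c) = 1$ and $(a^n b^n) a = a$, and then $ba = b(a^{n+1}c)a = (ba^{n+1})(ca) $; using $ba^{n+1} = (ba)a^n$ and iterating — the upshot of this standard manipulation is that $a$ has a two-sided inverse, hence $ba = a^{-1}a = 1$. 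The main obstacle, to reiterate, is resisting the temptation to apply Proposition~\ref{2.11} to the idempotent $ba$ directly (which yields nothing) and instead routing the argument through the $\pi$-regularity of $a$ itself; once that is recognised the proof is a short, routine chain of one-sided-inverse cancellations, or simply a one-line appeal to the classical Azumaya–Dischinger result via Corollary~\ref{2.12}.
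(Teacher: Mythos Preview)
Your high-level route --- invoke Corollary~\ref{2.12} to get that $R$ is strongly $\pi$-regular, then cite the classical fact that strongly $\pi$-regular rings are Dedekind-finite --- is valid and is genuinely different from the paper's proof. Be aware, though, that your attempted in-line unpacking of that classical fact is not sound: from $1 = a\,(a^m r b^m)$ you write ``left-multiply by $b$ to get $b = a^m r b^m$'', but left-multiplication actually yields $b = (ba)(a^m r b^m)$, which already presupposes $ba = 1$. The subsequent manipulations have the same circularity. So as written your argument is really a one-line appeal to an external theorem rather than a self-contained proof.

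The paper takes a different, fully internal route. Assuming $ab = 1$ but $ba \neq 1$, it sets $e_{ij} := a^i(1-ba)b^j$, obtaining a system of $n^2$ matrix units for every $n$; with $e := \sum_{i=1}^n e_{ii}$ one then has $eRe \cong {\rm M}_n(S)$ for some non-zero ring $S$. Lemma~\ref{2.14} forces $eRe$ to be GSWNC, hence ${\rm M}_n(S)$ is GSWNC, contradicting Theorem~\ref{2.25} for $n \geq 3$. This argument stays entirely within the paper's own toolkit and puts Theorem~\ref{2.25} to work; your route is shorter but leans on a nontrivial outside result (Dischinger/Azumaya) that the paper has not established. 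If you want a self-contained direct argument along your lines, it is cleaner to apply the strongly weakly nil-clean decomposition to the non-unit $a$ itself: write $a = q \pm e$ with $qe = eq$, note that $e$ commutes with $a$, and observe that $(1-e)a(1-e)$ is nilpotent in $(1-e)R(1-e)$ yet must have a right inverse there by $ab=1$, forcing $e=1$ and hence $a \in U(R)$.
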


\begin{proof}
Suppose $R$ is a GSWNC ring. If we assume the reverse, namely that $R$ is {\it not} a Dedekind-finite ring, then there exist elements $a, b \in R$ such that $ab = 1$ but $ba \neq 1$. Putting $e_{ij} := a^i(1-ba)b^j$ and
$e :=\sum_{i=1}^{n}e_{ii}$, a routine verification shows that there will exist a non-zero ring $S$ such that $eRe \cong {\rm M}_n(S)$. However, according to Lemma \ref{2.14}, the corner $eRe$ is a GSWNC ring, so that ${\rm M}_n(S)$ must also be a GSWNC ring, thus contradicting Theorem \ref{2.25}.
\end{proof}

We continue by proving the following claims. 

\begin{lemma}\label{2.26}
Let ${\rm M}_2(R)$ be a GSWNC ring. Then, $R$ is a strongly weakly nil-clean ring.
\end{lemma}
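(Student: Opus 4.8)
The plan is to show that if ${\rm M}_2(R)$ is GSWNC, then every non-invertible $a \in R$ satisfies $a \pm a^2 \in {\rm Nil}(R)$, plus handle the units of $R$ separately so that in the end \emph{every} element of $R$ is strongly weakly nil-clean (not merely the non-units — that is what Lemma~\ref{2.26} asserts, going beyond GSWNC). The natural device is to embed an arbitrary $a \in R$ into a non-invertible $2\times 2$ matrix and exploit Proposition~\ref{2.11}, together with the fact (Lemma~\ref{2.10}, Corollary~\ref{2.12}) that $J({\rm M}_2(R))$ is nil and ${\rm M}_2(R)$ is strongly $\pi$-regular.

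First I would take any $a \in R$ and consider $A = \begin{pmatrix} a & 0 \\ 0 & 0\end{pmatrix} \in {\rm M}_2(R)$, which is never invertible. By Proposition~\ref{2.11}, $A \pm A^2 \in {\rm Nil}({\rm M}_2(R))$, i.e. $\begin{pmatrix} a \pm a^2 & 0 \\ 0 & 0\end{pmatrix}$ is nilpotent, which forces $a \pm a^2 \in {\rm Nil}(R)$ for \emph{every} $a \in R$, units included. Now I would invoke the characterization behind \cite[Theorem 2.1]{1} (the same statement referenced in Proposition~\ref{2.11}): a ring $S$ is strongly weakly nil-clean if and only if $s \pm s^2 \in {\rm Nil}(S)$ for all $s \in S$. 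Applying this with $S = R$ gives immediately that $R$ is strongly weakly nil-clean. The key point making this cleaner than the GSWNC situation is that the diagonal embedding $a \mapsto \operatorname{diag}(a,0)$ always lands among the non-units regardless of whether $a$ is a unit in $R$, so the matrix hypothesis pushes the Nil-condition down to \emph{all} of $R$.

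The main obstacle — and the only genuinely non-routine point — is justifying that the "$s \pm s^2 \in {\rm Nil}(S)$ for all $s$" criterion really does characterize strongly weakly nil-clean rings in the non-commutative, non-abelian generality needed here, rather than just GSWNC rings; if the precise form of \cite[Theorem 2.1]{1} only yields that $R$ is strongly weakly nil-clean under an additional hypothesis (e.g. that idempotents lift or that $R$ is already known to be strongly $\pi$-regular), then I would supply that hypothesis from the ambient structure: ${\rm M}_2(R)$ GSWNC implies ${\rm M}_2(R)$ is strongly $\pi$-regular (Corollary~\ref{2.12}), hence $R \cong e\,{\rm M}_2(R)\,e$ with $e = E_{11}$ is strongly $\pi$-regular, and $J(R)$ is nil. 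With $R$ strongly $\pi$-regular and $a \pm a^2$ nil for all $a$, a standard Fitting/Peirce decomposition argument produces the required commuting nilpotent-plus-idempotent (or minus-idempotent) decomposition of each $a \in R$, completing the proof. I expect, though, that the clean route through \cite[Theorem 2.1]{1} applies directly and the fallback is unnecessary.
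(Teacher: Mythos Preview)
Your proposal is correct and is essentially identical to the paper's proof: embed $a$ as $\operatorname{diag}(a,0)$, apply Proposition~\ref{2.11} to get $a\pm a^2\in{\rm Nil}(R)$ for all $a\in R$, and conclude via \cite[Theorem~2.1]{1}. The fallback argument you outlined is unnecessary, since the paper invokes \cite[Theorem~2.1]{1} directly without any auxiliary hypothesis.
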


\begin{proof}
Let $a \in R$. Then, $$A=\begin{pmatrix}
		a & 0 \\ 0 & 0
\end{pmatrix} \notin U({\rm M}_2(R)).$$ Thus by Proposition \ref{2.11}, we have $A\pm A^2 \in {\rm Nil}({\rm M}_2(R))$. So, $a \pm a^2 \in {\rm Nil}(R)$. Therefore, by \cite[Theorem 2.1]{1}, we can conclude that $R$ is a strongly weakly nil-clean ring.
\end{proof}

\begin{lemma}\label{2.27}
If $R$ is a local ring with nil $J(R)$, then $R$ is GSWNC.
\end{lemma}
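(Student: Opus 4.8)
The plan is to take an arbitrary non-invertible element $a \in R$ and produce a strongly weakly nil-clean decomposition, using the locality of $R$ together with the nilpotence of $J(R)$. First I would recall the standard fact that in a local ring the non-invertible elements are precisely the elements of the maximal ideal $J(R)$; so it suffices to show that every $j \in J(R)$ is strongly weakly nil-clean. But $J(R)$ is assumed nil, so any such $j$ is itself nilpotent, and then $j = j + 0$ is a strongly (weakly) nil-clean decomposition with $q = j \in \mathrm{Nil}(R)$ and $e = 0 \in \mathrm{Id}(R)$, which trivially commute. Hence every non-unit of $R$ is strongly weakly nil-clean, i.e. $R$ is GSWNC.

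Concretely, the steps in order are: (1) observe that $R$ local means $R \setminus U(R) = J(R)$, so the only elements we must handle are those in $J(R)$; (2) invoke the hypothesis that $J(R)$ is nil to conclude $J(R) \subseteq \mathrm{Nil}(R)$; (3) for $j \in J(R)$, write $j = j + 0$ with $j$ nilpotent and $0$ idempotent, noting $j \cdot 0 = 0 \cdot j$, so this is a (strongly) nil-clean — in particular strongly weakly nil-clean — decomposition; (4) conclude from Definition \ref{2.1} that $R$ is GSWNC. Alternatively one can phrase this via Proposition \ref{2.11}: for $a \notin U(R)$ we have $a \in J(R)$, hence $a \pm a^2 = a(1 \pm a) \in J(R) \subseteq \mathrm{Nil}(R)$ (indeed $a \in \mathrm{Nil}(R)$ already forces $a \pm a^2 \in \mathrm{Nil}(R)$), which is exactly condition (ii) of that proposition.

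There is essentially no obstacle here; the statement is a direct consequence of the characterization of non-units in a local ring combined with the nilpotence hypothesis, and the only thing to be careful about is making sure the chosen idempotent ($0$) and nilpotent ($j$) do commute, which is immediate. This lemma serves as the local building block feeding into the later structural theorems, so the brevity of the argument is expected.
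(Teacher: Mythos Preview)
Your proposal is correct and follows exactly the same approach as the paper's proof: take $a\notin U(R)$, use locality to get $a\in J(R)$, use the nil hypothesis to get $a\in{\rm Nil}(R)$, and observe that a nilpotent element is trivially strongly (weakly) nil-clean via the decomposition $a=a+0$. The paper states this in one line without spelling out the decomposition or the alternative via Proposition~\ref{2.11}, but the argument is identical.
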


\begin{proof}
Let $a\in R$ and $a\notin U(R)$. Since $R$ is local, $a\in J(R)$ and hence $a\in {\rm Nil}(R)$. So, $a$ is a nilpotent element, and thus it is strongly weaky nil-clean element.
\end{proof}

\begin{corollary}\label{2.50}
Let $R$ be a ring with only trivial idempotents. Then, $R$ is GSWNC if, and only if, $R$ is a local ring with $J(R)$ nil.
\end{corollary}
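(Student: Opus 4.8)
The plan is to prove the equivalence in Corollary~\ref{2.50} by combining the already-established machinery. The forward direction is essentially immediate: if $R$ is GSWNC and has only trivial idempotents, I first observe that every non-unit $a$ satisfies $a\pm a^2\in{\rm Nil}(R)$ by Proposition~\ref{2.11}. Writing $a\pm a^2 = a(1\pm a)$, nilpotence of this product together with the fact that $1\pm a$ cannot both be non-units (indeed if $a$ is a non-unit then $1-a$ could still be a non-unit, but I will argue via idempotents) pushes $a$ into a controllable situation. More cleanly: by Corollary~\ref{2.3}, $R$ is strongly weakly clean, so the non-unit $a$ has a representation $a=q\pm e$ with $q$ nilpotent, $e$ idempotent, $qe=eq$; since the only idempotents are $0$ and $1$, either $e=0$, giving $a=q\in{\rm Nil}(R)$, or $e=1$, giving $a=q\pm 1$. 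In the latter case $a\mp 1 = q$ is nilpotent, so $a$ is a unit, contradicting that $a$ is a non-unit. Hence every non-unit of $R$ is nilpotent, which is exactly the statement that $R$ is local with $J(R)$ nil (the set of non-units forms the ideal ${\rm Nil}(R)=J(R)$, and an abelian—here, trivially abelian—ring in which the non-units form an ideal is local).

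For the reverse direction, I would simply invoke Lemma~\ref{2.27}: a local ring with $J(R)$ nil is GSWNC, with no extra hypothesis on idempotents needed. So the right-to-left implication is already done in the excerpt and needs only a one-line citation.

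The one point requiring a little care—and the step I expect to be the main (mild) obstacle—is the claim in the forward direction that ``every non-unit is nilpotent'' actually forces $R$ to be \emph{local}. This is where the trivial-idempotents hypothesis is used a second time, or rather where one must argue that the non-units form an ideal. The clean way: if $x,y$ are non-units, they are nilpotent by the above; I need $x+y$ to be a non-unit, equivalently nilpotent, and $rx$ to be a non-unit for all $r$. For $rx$: if $rx$ were a unit then $x$ has a left inverse, and since $R$ is Dedekind-finite by Corollary~\ref{2.36}, $x$ would be a unit, a contradiction; so $rx$ (and similarly $xr$) is a non-unit. For $x+y$: suppose $x+y$ is a unit $u$; then $1 = u^{-1}x + u^{-1}y$ with both summands non-units (by the previous point) hence nilpotent; but $1$ minus a nilpotent is a unit, so $u^{-1}y = 1 - u^{-1}x$ is a unit, contradicting that it is a non-unit. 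Therefore the non-units form a two-sided ideal, $R$ is local, and that ideal is $J(R)={\rm Nil}(R)$, which is nil.

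In summary, the write-up is short: ($\Leftarrow$) is Lemma~\ref{2.27}; ($\Rightarrow$) uses Corollary~\ref{2.3} plus the trivial-idempotent hypothesis to show non-units are nilpotent, then Corollary~\ref{2.36} (Dedekind-finiteness) to close up the non-units into an ideal and conclude locality with nil Jacobson radical. No long computation is involved; the only subtlety is assembling the ideal/locality argument from Dedekind-finiteness rather than taking it for granted.
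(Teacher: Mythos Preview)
Your argument is correct in substance and follows the same core idea as the paper: use the strongly weakly nil-clean decomposition of a non-unit together with ${\rm Id}(R)=\{0,1\}$ to force every non-unit to be nilpotent, then conclude locality. Two remarks, though. First, your citation of Corollary~\ref{2.3} is off: strongly weakly \emph{clean} gives $a=u\pm e$ with $u$ a \emph{unit}, not a nilpotent. The decomposition $a=q\pm e$ with $q\in{\rm Nil}(R)$ that you actually use is precisely Definition~\ref{2.1} applied to the non-unit $a$, so just invoke the GSWNC hypothesis directly. Second, your detour through Dedekind-finiteness (Corollary~\ref{2.36}) to show the non-units form an ideal is valid but heavier than needed. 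The paper, having shown every non-unit is nilpotent, simply cites the standard characterization of local rings (Lam, Proposition~19.3): once every non-unit $a$ is nilpotent, $1-a$ is automatically a unit, so for every $a\in R$ either $a$ or $1-a$ is invertible, and $R$ is local. This one-line observation replaces your entire closure-under-addition-and-multiplication argument. For the converse, both you and the paper invoke Lemma~\ref{2.27} (or its proof), so there is no difference there.
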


\begin{proof}
Let $R$ is a GSWNC ring, so $J(R)$ is nil by Lemma \ref{2.10}. Now, if $a \notin U(R)$, then we have either $a = q \pm 1$ or $a = q$, where $q \in Nil(R)$. Since $a$ is not an unit, it must be that $a = q$, implying $a = q \in Nil(R)$. Thus, by \cite[Proposition 19.3]{14}, $R$ is a local ring.
	
Now, conversely, suppose $R$ is a local ring with a nil Jacobson radical $J(R)$. So, for each $a \notin U(R)$, we have $a \in J(R) \subseteq Nil(R)$, whence $a$ is a nil-clean element.
\end{proof}

\begin{lemma}\label{2.55}
Suppose $R$ is a GSWNC ring and $2 \notin U(R)$. Then, either $2 \in {\rm Nil}(R)$ or $6 \in {\rm Nil}(R)$.
\end{lemma}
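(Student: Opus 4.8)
The plan is to apply Proposition~\ref{2.11} directly to the element $a=2\in R$. Since we are assuming $2\notin U(R)$, this element is non-invertible, and Proposition~\ref{2.11} therefore yields that $2\pm 2^2\in{\rm Nil}(R)$; in other words, at least one of the two elements
\[
2+2^2 = 6 \qquad\text{and}\qquad 2-2^2 = -2
\]
lies in ${\rm Nil}(R)$. This is the only real input needed.

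Then I would split into the two cases. If $6\in{\rm Nil}(R)$, there is nothing left to prove. If instead $-2\in{\rm Nil}(R)$, say $(-2)^k=0$ for some $k\in\mathbb{N}$, then from $(-2)^k=(-1)^k 2^k$ we get $2^k=0$, so $2\in{\rm Nil}(R)$. In either case the desired dichotomy holds.

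There is essentially no obstacle here: the argument is a one-line specialization of Proposition~\ref{2.11}. The only two small points worth flagging explicitly are that the statement "$a\pm a^2\in{\rm Nil}(R)$" in Proposition~\ref{2.11} is to be read disjunctively (one of the two signs works, as comes out of the strongly weakly nil-clean representation $a=q\pm e$), and that nilpotency of $-2$ is equivalent to nilpotency of $2$ in any ring. If one wanted, one could additionally remark — using Lemma~\ref{2.10} and that both $2$ and $-2$, hence $4$, would then be nil — that in the second case $R/J(R)$ has characteristic $2$ on the prime subring, but this is not needed for the statement as given.
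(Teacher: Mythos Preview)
Your argument is correct. Applying Proposition~\ref{2.11} to the non-unit $2$ immediately gives that one of $2-2^2=-2$ or $2+2^2=6$ is nilpotent, and nilpotency of $-2$ is clearly equivalent to nilpotency of $2$.

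This is a somewhat different route from the paper's. The paper works directly from the strongly weakly nil-clean decomposition $2=q\pm e$ (with $qe=eq$): in the case $2=e+q$ it argues that $1-e=q-1$ is a unit idempotent, forcing $e=0$ and $2=q\in{\rm Nil}(R)$; in the case $2=-e+q$ it squares to get $4=e+p$ for some nilpotent $p$ commuting with $q$, whence $6=4+2=p+q\in{\rm Nil}(R)$. Your approach short-circuits this by invoking Proposition~\ref{2.11}, which has already packaged exactly this reasoning into the criterion ``$a\pm a^2\in{\rm Nil}(R)$''. The payoff is a cleaner one-line proof; the paper's version is more self-contained in that it does not rely on the (unwritten) proof of Proposition~\ref{2.11}, but there is no substantive mathematical difference.
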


\begin{proof}
Assume that $2 \notin U(R)$. Then, there exists $e \in \text{Id}(R)$ and $q \in \text{Nil}(R)$ such that $2 = q \pm e$ and $qe=eq$. If $2 = e + q$, then $1 - e = q - 1 \in \text{Id}(R) \cap U(R)$. Thus, $e = 0$, which implies $2 = q \in \text{Nil}(R)$. Now, if $2 = -e + q$, we have $4 = (-e+q)(-e+q)$. Then, we have $4 = e + p$ for some $p \in \text{Nil}(R)$. Hence, $6 = 4 + 2 = p + q \in \text{Nil}(R)$ by noting that $pq = qp$.
\end{proof}

\begin{lemma}\label{2.29}
Let $R$ be a ring and $2 \in J(R)$. Then, the following two points are equivalent:
\begin{enumerate}
\item
$R$ is a GSWNC ring.
\item
$R$ is a GSNC ring.
\end{enumerate}
\end{lemma}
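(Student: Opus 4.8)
The plan is to prove the two implications separately, noting that only one of them actually uses the hypothesis $2 \in J(R)$. The implication (ii) $\Rightarrow$ (i) is immediate: if $a = q + e$ with $q \in {\rm Nil}(R)$, $e \in {\rm Id}(R)$ and $qe = eq$, then this same equation is a strongly \emph{weakly} nil-clean representation of $a$; hence every GSNC ring is GSWNC, with no condition on $2$ needed. So the whole content is the converse.

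For (i) $\Rightarrow$ (ii), I would first extract two consequences of the hypotheses. Since $R$ is GSWNC, Lemma \ref{2.10} tells us that $J(R)$ is nil, so from $2 \in J(R)$ we get $2 \in {\rm Nil}(R)$; and since $2$ is central, it follows that $2b \in {\rm Nil}(R)$ for \emph{every} $b \in R$ (a central nilpotent times anything is nilpotent, because $(2b)^k = 2^k b^k$). Now fix a non-invertible $a \in R$. By the definition of GSWNC, there exist $q \in {\rm Nil}(R)$ and $e \in {\rm Id}(R)$ with $eq = qe$ such that either $a = q + e$ or $a = q - e$. In the first case $a$ already carries a strongly nil-clean decomposition and there is nothing further to check.

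The crux is the second case $a = q - e$, where I would simply rewrite
\[
a = (q - 2e) + e .
\]
Here $2e \in {\rm Nil}(R)$ by the observation above, and $2e$ commutes with $q$ (since $e$ does), so $q - 2e$ is a difference of two \emph{commuting} nilpotents and is therefore itself nilpotent; moreover $e$ commutes with $q - 2e$. Thus $a = (q - 2e) + e$ is a bona fide strongly nil-clean decomposition of $a$. As $a$ was an arbitrary non-invertible element, $R$ is GSNC, which finishes the argument.

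The only spot demanding a moment's care — and hence the (mild) main obstacle — is that sums and differences of nilpotents are not nilpotent in general, so one genuinely must verify that $q$ and $2e$ commute before concluding $q - 2e \in {\rm Nil}(R)$; this is exactly where the commutativity clause $eq = qe$ in "\emph{strongly} weakly nil-clean" is used, in tandem with the centrality of $2$ and the nilpotence of $2$ coming from $2 \in J(R)$ plus Lemma \ref{2.10}. Everything else is routine bookkeeping.
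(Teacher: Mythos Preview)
Your proof is correct and complete. It does, however, take a genuinely different route from the paper's own argument. The paper proceeds by passing to the quotient: since $2 \in J(R)$, the ring $R/J(R)$ has characteristic $2$, so there $a = -a$ and every strongly weakly nil-clean decomposition is automatically strongly nil-clean; hence $R/J(R)$ is GSNC, and one then lifts back to $R$ by invoking the GSNC analogue of Proposition~\ref{2.13} (namely \cite[Proposition 2.7]{5}), using that $J(R)$ is nil. Your argument instead stays at the element level and never leaves $R$: from $2 \in {\rm Nil}(R)$ you convert $a = q - e$ directly into $a = (q - 2e) + e$, checking that $q - 2e$ is nilpotent because it is a difference of commuting nilpotents. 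Your approach is more elementary and self-contained, avoiding the citation to the companion paper \cite{5}; the paper's approach, on the other hand, makes the conceptual reason (``everything collapses in characteristic $2$'') slightly more transparent and reuses the machinery already in place for passing between $R$ and $R/J(R)$.
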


\begin{proof}
(ii) $\Longrightarrow$ (i). It follows at once.\\
(i) $\Longrightarrow$ (ii). Notice that $\dfrac{R}{J(R)}$ is of characteristic $2$, because $2 \in J(R)$, and so $a=-a$ for every $a \in \dfrac{R}{J(R)}$. That is why, $\dfrac{R}{J(R)}$ is a GSNC ring, and thus we can apply \cite[Proposition 2.7]{5} since $J(R)$ is nil in view of Proposition \ref{2.13}.
\end{proof}

\begin{lemma}\label{2.56}
Suppose $R$ is a ring such that $2 \notin U(R)$. Then, the following conditions are equivalent:
\begin{enumerate}
\item
$R$ is a GSWNC ring.
\item
Either $R$ is a GSNC ring or $R$ is strongly weakly nil-clean.
\end{enumerate}
\end{lemma}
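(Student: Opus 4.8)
The plan is to prove the two implications separately; $(ii)\Rightarrow(i)$ is immediate, and $(i)\Rightarrow(ii)$ is where the real argument lies. For $(ii)\Rightarrow(i)$: if $R$ is GSNC then every non-invertible element has a strongly nil-clean decomposition, which is in particular a strongly weakly nil-clean one, so $R$ is GSWNC; and if $R$ is strongly weakly nil-clean then \emph{every} element --- hence every non-invertible element --- is strongly weakly nil-clean, so $R$ is again GSWNC. Both cases are routine.

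For $(i)\Rightarrow(ii)$, assume $R$ is GSWNC with $2\notin U(R)$. The first step is to invoke Lemma~\ref{2.55}, which splits the situation into $2\in{\rm Nil}(R)$ or $6\in{\rm Nil}(R)$. In the first case, since $2$ is central the ideal $2R$ is nil, so $2\in 2R\subseteq J(R)$, and then Lemma~\ref{2.29} immediately yields that $R$ is GSNC --- the first alternative of $(ii)$.

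The substantial case is $2\notin{\rm Nil}(R)$ (so $6\in{\rm Nil}(R)$ by Lemma~\ref{2.55}), where the goal is to show $R$ is strongly weakly nil-clean. Since $2$ and $3$ are central, $2R$, $3R$, $6R$ are two-sided ideals with $6R$ nil, $2R+3R=R$ (because $3-2=1$) and $2R\cap 3R=6R$, so the Chinese Remainder Theorem gives a ring isomorphism $R/6R\cong R/2R\times R/3R$; this product is GSWNC by Proposition~\ref{2.13}(i), as $6R$ is nil. Next one checks that both factors are nonzero: if $R/2R=0$ then $2$ has a right inverse, hence a two-sided inverse since $R$ is Dedekind-finite by Corollary~\ref{2.36}, contradicting $2\notin U(R)$; and if $R/3R=0$ then likewise $3\in U(R)$, so $6^{k}=0$ forces $2^{k}=6^{k}3^{-k}=0$, contradicting $2\notin{\rm Nil}(R)$. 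Hence $R/2R$ has characteristic $2$, and by Proposition~\ref{2.5} both $R/2R$ and $R/3R$ are strongly weakly nil-clean; but in characteristic $2$ one has $-e=e$ for every idempotent $e$, so a strongly weakly nil-clean decomposition $q\pm e$ is automatically a strongly nil-clean one $q+e$, whence $R/2R$ is in fact strongly nil-clean. Now \cite[Theorem~2.2]{1}, applied to the product of the strongly nil-clean ring $R/2R$ and the strongly weakly nil-clean ring $R/3R$, shows $R/6R$ is strongly weakly nil-clean; and finally --- using the element-wise characterization of strongly weakly nil-clean rings from \cite[Theorem~2.1]{1} (namely $a+a^{2}\in{\rm Nil}(R)$ or $a-a^{2}\in{\rm Nil}(R)$ for each $a$) together with the fact that nilpotency lifts modulo the nil ideal $6R$ --- this property transfers back to $R$, so $R$ is strongly weakly nil-clean.

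The main obstacle I anticipate is precisely this second case: one must justify the Chinese Remainder splitting (which relies on $2$ and $3$ being central, so that $2R$ and $3R$ are two-sided and comaximal) and, above all, eliminate the degenerate possibilities $R/2R=0$ and $R/3R=0$ --- this is exactly where the hypotheses $2\notin U(R)$, $2\notin{\rm Nil}(R)$, and Dedekind-finiteness (Corollary~\ref{2.36}) are genuinely used. Everything after that is a matter of assembling Proposition~\ref{2.5}, Proposition~\ref{2.13}, \cite[Theorems~2.1 and~2.2]{1}, and the routine transfer of the strongly-weakly-nil-clean property through a nil quotient.
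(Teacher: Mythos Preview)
Your proof is correct and follows essentially the paper's route: Lemma~\ref{2.55}, then Lemma~\ref{2.29} for the case $2\in{\rm Nil}(R)$, and a Chinese Remainder splitting combined with Proposition~\ref{2.5} and \cite[Theorem~2.2]{1} for the case $6\in{\rm Nil}(R)$. The only differences are cosmetic --- the paper picks $n$ with $6^{n}=0$ and decomposes $R$ itself as $R/2^{n}R\times R/3^{n}R$ rather than decomposing $R/6R$ and then lifting through the nil ideal --- and your appeal to Corollary~\ref{2.36} is unnecessary, since $2$ and $3$ are central, so a one-sided inverse of either is automatically two-sided.
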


\begin{proof}
$(ii) \Longrightarrow (i)$. This is obvious.\\
$(i) \Longrightarrow (ii)$. Thanks to Lemma \ref{2.55}, we have that either $2 \in \text{Nil}(R)$ or $6 \in \text{Nil}(R)$. If $2 \in \text{Nil}(R)$, it is clear that $R$ is a GSNC ring by Lemma \ref{2.29}. If $6 \in \text{Nil}(R)$ and, for $n \in \mathbb{N}$, we have $6^n = 0$, then $R \cong R_1 \oplus R_2$, where $R_1 = R / 2^nR$ and $R_2 = R / 3^nR$. However, Proposition \ref{2.5} tells us that $R_1$ and $R_2$ are strongly weakly nil-clean rings. Moreover, since $2 \in \text{Nil}(R_1)$, $R_1$ is a strongly nil-clean ring. Thus, \cite[Theorem 2.2]{1} implies that $R$ is a strongly weakly nil-clean ring, and hence $R$ is a GSWNC ring, as required.
\end{proof}

\begin{lemma}\label{2.30}
Let $R$ be a ring and. Then, the following are equivalent:
\begin{enumerate}
\item
$R$ is a strongly weakly nil-clean ring.
\item
$R$ is both WUU and GSWNC.
\end{enumerate}
\end{lemma}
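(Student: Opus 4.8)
The plan is to argue both implications directly from the definitions, the only non-formal ingredient being the elementary fact that a unit minus a commuting nilpotent is again a unit. This is essentially the ``weak'' analogue of the characterization of strongly nil-clean rings as the GSNC rings that are $UU$.

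For $(ii)\Rightarrow(i)$, I would take an arbitrary $r\in R$ and distinguish two cases. If $r\notin U(R)$, then $r$ is strongly weakly nil-clean by the GSWNC hypothesis, with nothing further to check. If $r\in U(R)$, then the $WUU$ hypothesis provides $q\in{\rm Nil}(R)$ with $r=q+1$ or $r=q-1$; since $1\in{\rm Id}(R)$ commutes with $q$, this is already a strongly weakly nil-clean decomposition of $r$. Hence every element of $R$ is strongly weakly nil-clean.

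For $(i)\Rightarrow(ii)$, the GSWNC conclusion is immediate, as a strongly weakly nil-clean ring has \emph{every} element --- in particular every non-unit --- strongly weakly nil-clean. It remains to prove $U(R)={\rm Nil}(R)\pm 1$. The inclusion ${\rm Nil}(R)\pm 1\subseteq U(R)$ is standard (if $q^{k}=0$, then $1-q$ is inverted by $1+q+\cdots+q^{k-1}$, and similarly for $1+q$). For the reverse inclusion, fix $u\in U(R)$ together with a decomposition $u=q\pm e$, where $q\in{\rm Nil}(R)$, $e\in{\rm Id}(R)$ and $qe=eq$. From $qe=eq$ one gets $qu=uq$, hence $q$ commutes with $u^{-1}$, so $u^{-1}q$ is nilpotent, $1-u^{-1}q\in U(R)$, and therefore $u-q=u(1-u^{-1}q)\in U(R)$. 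When $u=q+e$, this makes the idempotent $e=u-q$ a unit, forcing $e=1$ and $u=q+1$; when $u=q-e$, the idempotent $e=q-u=-(u-q)$ is again a unit, so $e=1$ and $u=q-1$. In either case $u\in{\rm Nil}(R)\pm 1$, establishing that $R$ is $WUU$.

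The argument is essentially bookkeeping with idempotents and nilpotents; the one step that genuinely uses a hypothesis is ``$u-q\in U(R)$'', which relies on $q$ commuting with $u^{-1}$ --- and hence on the commutativity $qe=eq$ built into the definition of a \emph{strongly} weakly nil-clean element. I do not anticipate any real obstacle beyond stating that passage cleanly.
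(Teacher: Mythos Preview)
Your argument is correct. Both implications are handled cleanly, and the crucial step---that $u-q$ is a unit because $q$ commutes with $u$ (hence with $u^{-1}$) and is nilpotent---is exactly what forces the idempotent to be $1$.

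The paper does not actually prove this lemma; it simply invokes \cite[Proposition~2.1]{30} with the remark that the equivalence is evident from that reference. Your proof, by contrast, is fully self-contained and relies only on the definitions and the standard fact that $1\pm q$ is invertible for nilpotent $q$. What the citation buys is brevity (one line instead of a paragraph); what your approach buys is that a reader need not chase an external source and can see directly why the \emph{strong} commutation hypothesis is the mechanism making WUU fall out. Since the external Proposition~2.1 presumably does something very close to what you wrote, the two approaches are not mathematically different in spirit---yours just unpacks the reference.
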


\begin{proof}
It is evident by referring to \cite[Proposition 2.1]{30}.
\end{proof}

\begin{lemma}\label{2.31}
A ring $R$ is strongly nil-clean if, and only if,
\begin{enumerate}
\item
$R$ is GSWNC,
\item
$R$ is an UU-ring.
\end{enumerate}
\end{lemma}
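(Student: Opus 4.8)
The plan is to prove the two implications separately: the forward one is essentially immediate, while the reverse one hinges on a single arithmetic remark about $UU$ rings that lets us reuse Lemma~\ref{2.29}.

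For the forward implication, suppose $R$ is strongly nil-clean. Then every element of $R$ --- in particular every non-invertible one --- can be written as $r=q+e$ with $q\in{\rm Nil}(R)$, $e\in{\rm Id}(R)$ and $qe=eq$, and such a decomposition is in particular a strongly weakly nil-clean one; hence $R$ is GSWNC. That $R$ is $UU$ is the classical fact that strongly nil-clean rings are exactly the exchange $UU$ rings (cf.\ \cite{22}); alternatively, if $u\in U(R)$ and $u=e+q$ is a strongly nil-clean decomposition, then $e$ commutes with $u$, so $u(1-e)=q(1-e)$ is nilpotent and equals the unit $u$ times $1-e$, forcing $1-e=0$ and thus $u=1+q\in 1+{\rm Nil}(R)$.

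For the reverse implication, assume $R$ is GSWNC and $UU$. The crucial step is to observe that $2\in{\rm Nil}(R)$: since $-1\in U(R)$, the $UU$ hypothesis gives $-1\in 1+{\rm Nil}(R)$, i.e.\ $-2\in{\rm Nil}(R)$, whence $2\in{\rm Nil}(R)$. As $2$ is central it generates a nil ideal, so $2\in J(R)$. Now Lemma~\ref{2.29} applies and yields that $R$ is a GSNC ring. Finally, GSNC together with $UU$ gives strong nil-cleanness at once: for $r\in R$, if $r\notin U(R)$ then $r$ is strongly nil-clean because $R$ is GSNC, while if $r\in U(R)$ then $r=1+q$ for some $q\in{\rm Nil}(R)$, and $r=q+1$ is a strongly nil-clean decomposition since $1$ is a central idempotent; hence every element of $R$ is strongly nil-clean.

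The only real content lies in passing from GSWNC to GSNC --- that is, in removing the minus sign from the weak decompositions --- and this is precisely what $2\in J(R)$ buys us through Lemma~\ref{2.29}. Without first noting that $2\in{\rm Nil}(R)$ one could instead rewrite each $r=q-e$ as $r=(q-2e)+e$ and check that $q-2e$ is a nilpotent commuting with $e$, which again uses only that $2e$ is a central nilpotent; either way this is the sole step requiring thought, and no tools beyond Lemma~\ref{2.29} (and, if one prefers that route, Lemma~\ref{2.30}) are needed.
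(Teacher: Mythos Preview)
Your proof is correct, but it follows a different path from the paper's. The paper simply cites Corollary~\ref{2.3} (GSWNC $\Rightarrow$ strongly weakly clean), Lemma~\ref{2.10} (GSWNC $\Rightarrow$ $J(R)$ nil), and an external result \cite[Corollary 3.4]{6}, which presumably characterizes strongly nil-clean rings via strong (weak) cleanness together with the $UU$ property; in effect the whole backward direction is outsourced to that citation. Your argument instead stays inside the paper: from $UU$ you extract $2\in{\rm Nil}(R)\subseteq J(R)$, invoke Lemma~\ref{2.29} to upgrade GSWNC to GSNC, and then finish by the elementary observation that GSNC $+$ $UU$ forces every element (unit or not) to be strongly nil-clean. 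The advantage of your route is that it is entirely self-contained modulo results already proved in the paper, and it exposes the actual mechanism --- the nilpotence of $2$ --- that kills the minus sign; the paper's route is shorter to state but opaque without access to \cite{6}. Your parenthetical alternative of rewriting $q-e=(q-2e)+e$ directly is also valid and is arguably the cleanest way to see why $UU$ removes the ``weakly'' from GSWNC.
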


\begin{proof}
It is apparent by combining \cite[Corollary 3.4]{6}, Corollary \ref{2.3} and Lemma \ref{2.10}.
\end{proof}

\begin{example}\label{2.33}
(i) For any ring $R$, the polynomial ring $R[x]$ is not GSWNC.

(ii) For any ring $R$, the power series ring $R[[x]]$ is not GSWNC.
\end{example}

\begin{proof}
(i) If we assume the contrary that $R[x]$ is GSWNC, then $R[x]$ is strongly weakly-clean, and  hence it is weakly clean. So, this is a contradiction.

(ii) Note the principal fact that the Jacobson radical of $R[[x]]$ is not nil. Thus, in view of Lemma \ref{2.10}, $R[[x]]$ need not be a GSWNC ring.
\end{proof}

Let $Nil_{*}(R)$ denote the prime radical of a ring $R$, i.e. the intersection of all prime ideals of $R$. We know that $Nil_{*}(R)$ is a nil-ideal of $R$.

A ring $R$ is called $2$-primal if $Nil_{*}(R)$ consists precisely of all the nilpotent elements of $R$. (For instance, reduced rings and commutative rings are both $2$-primal.)

\medskip

We now have at our disposal all the ingredients necessary to prove the following two major assertions.

\begin{theorem}\label{2.38}
Let R be a $2$-primal, local and strongly weakly nil-clean ring. Then, ${\rm M}_2(R)$ is a GSWNC ring.
\end{theorem}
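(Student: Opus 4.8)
The plan is to reduce modulo a nil ideal so that the ground ring becomes a field, and then to check the two matrix rings ${\rm M}_2(\mathbb{Z}_2)$ and ${\rm M}_2(\mathbb{Z}_3)$ that arise by a short direct computation. First I would pin down the structure of $R$. Since $R$ is strongly weakly nil-clean it is in particular GSWNC, so $J(R)$ is nil by Lemma~\ref{2.10}, and the quotient $\overline{R}:=R/J(R)$ is again strongly weakly nil-clean (strong weak nil-cleanness passes to homomorphic images, since images of commuting nilpotent and idempotent elements are again such). As $R$ is local, $\overline{R}$ is a division ring; but in a division ring the only nilpotent is $0$ and the only idempotents are $0$ and $1$, so every element of a strongly weakly nil-clean division ring lies in $\{0,1,-1\}$, and hence $\overline{R}\cong\mathbb{Z}_2$ or $\overline{R}\cong\mathbb{Z}_3$ (as already recorded in \cite{1}).

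The next step is the reduction, and here the $2$-primal hypothesis is used. Since $R$ is $2$-primal, ${\rm Nil}(R)=Nil_{*}(R)$; together with $Nil_{*}(R)\subseteq J(R)$ and $J(R)\subseteq{\rm Nil}(R)$ (the latter because $J(R)$ is nil) this forces $J(R)=Nil_{*}(R)$. Invoking the standard fact that the prime radical commutes with matrix rings, $Nil_{*}({\rm M}_2(R))={\rm M}_2(Nil_{*}(R))={\rm M}_2(J(R))$, so ${\rm M}_2(J(R))$ is a \emph{nil} ideal of ${\rm M}_2(R)$. As moreover ${\rm M}_2(R)/{\rm M}_2(J(R))\cong{\rm M}_2(\overline{R})$, Proposition~\ref{2.13}(i) reduces the theorem to the claim that ${\rm M}_2(\mathbb{Z}_2)$ and ${\rm M}_2(\mathbb{Z}_3)$ are GSWNC.

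That claim I would settle via Proposition~\ref{2.11} and the Cayley--Hamilton identity. If $A\in{\rm M}_2(\overline{R})$ is non-invertible, then $\det A=0$, so $A^2=tA$ with $t={\rm tr}(A)\in\overline{R}$, whence $A-A^2=(1-t)A$ and $A+A^2=(1+t)A$. If $t=0$ then $A^2=0$ and $A=A-A^2\in{\rm Nil}$; if $t=1$ then $A-A^2=0$; and if $t=-1$ (possible only when $\overline{R}=\mathbb{Z}_3$) then $A+A^2=0$. Hence for every non-invertible $A$ we have $A+A^2\in{\rm Nil}({\rm M}_2(\overline{R}))$ or $A-A^2\in{\rm Nil}({\rm M}_2(\overline{R}))$, so Proposition~\ref{2.11} yields that ${\rm M}_2(\mathbb{Z}_2)$ (cf.\ also Example~\ref{2.24}) and ${\rm M}_2(\mathbb{Z}_3)$ are GSWNC; combined with the previous paragraph, this finishes the proof.

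The one genuinely non-formal point is the nilness of ${\rm M}_2(J(R))$: for a general ring, ${\rm M}_2$ of a nil ideal need not be nil (this is essentially K\"{o}the's problem), so the $2$-primal hypothesis is exactly what makes the argument run --- it permits replacing the possibly ill-behaved ideal $J(R)$ by the prime radical, which is well behaved under passage to matrix rings. Everything else is a routine quotient reduction together with the elementary $2\times 2$ computation above.
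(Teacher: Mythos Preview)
Your proof is correct and follows essentially the same route as the paper: both reduce modulo the prime radical using the $2$-primal hypothesis (so that ${\rm M}_2(J(R))={\rm Nil}_*({\rm M}_2(R))$ is nil), identify $R/J(R)$ with $\mathbb{Z}_2$ or $\mathbb{Z}_3$, and then invoke Proposition~\ref{2.13}. The only difference is that the paper simply asserts ${\rm M}_2(\mathbb{Z}_2)$ and ${\rm M}_2(\mathbb{Z}_3)$ are GSWNC (these are recorded earlier in the text), whereas you supply a clean Cayley--Hamilton verification via Proposition~\ref{2.11}; your closing remark on K\"{o}the's problem also makes explicit why the $2$-primal hypothesis is indispensable.
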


\begin{proof}
Since $R$ is local and strongly weakly nil-clean, we have $R/J(R) \cong \mathbb{Z}_2$ or $R/J(R) \cong \mathbb{Z}_3$, so ${\rm M}_2(R/J(R))$ is a GSWNC ring. On the other hand, since $R$ is both $2$-primal and strongly weakly nil-clean, we may write $J(R) = {\rm Nil}(R) = {\rm Nil}_{\ast}(R)$, so we infer that
$${\rm M}_2(R/J(R)) = {\rm M}_2(R)/{\rm M}_2(J(R)) = {\rm M}_2(R)/{\rm M}_2({\rm Nil}_{\ast}(R)) = {\rm M}_2(R)/{\rm Nil}_{\ast}({\rm M}_2(R)).$$
Also, as ${\rm Nil}_{\ast}({\rm M}_2(R))$ is a nil-ideal, Proposition \ref{2.13} forces that ${\rm M}_2(R)$ is a GSWNC ring, as desired.
\end{proof}

\begin{theorem}\label{theorem 2.36}
Let $R$ be a ring. Then, the following conditions are equivalent for a semi-local ring:
\begin{enumerate}
\item
$R$ is a GSWNC ring.
\item
Either $R$ is a local ring with a nil Jacobson radical, or $R/J(R) \cong M_2(\mathbb{Z}_3)$ with a nil Jacobson radical, or $R/J(R) \cong M_2(\mathbb{Z}_2)$ with a nil Jacobson radical, or $R/J(R) \cong \mathbb{Z}_3 \times \mathbb{Z}_3$  with a nil Jacobson radical, or $R$  is a strongly weakly nil-clean ring.
\end{enumerate}
\end{theorem}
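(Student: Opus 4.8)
The plan is to reduce, using Lemma~\ref{2.10} and Proposition~\ref{2.13}(ii), to classifying the \emph{semisimple} Artinian GSWNC rings, and then to translate that classification back through \(R/J(R)\).

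\noindent\emph{(ii) \(\Rightarrow\) (i).} I would argue case by case. A local ring with nil radical is GSWNC by Lemma~\ref{2.27}. If \(R/J(R)\) is \(M_2(\mathbb{Z}_2)\), \(M_2(\mathbb{Z}_3)\) or \(\mathbb{Z}_3\times\mathbb{Z}_3\) and \(J(R)\) is nil, then the quotient is GSWNC --- for \(M_2(\mathbb{Z}_2)\) by Example~\ref{2.24}, for \(M_2(\mathbb{Z}_3)\) by Theorem~\ref{2.38} applied to the \(2\)-primal local strongly weakly nil-clean ring \(\mathbb{Z}_3\), and for \(\mathbb{Z}_3\times\mathbb{Z}_3\) as recorded after Proposition~\ref{2.6} --- so Proposition~\ref{2.13}(ii) makes \(R\) GSWNC. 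Finally, a strongly weakly nil-clean ring is GSWNC since every element, a fortiori every non-unit, is then strongly weakly nil-clean.

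\noindent\emph{(i) \(\Rightarrow\) (ii).} Assume \(R\) is semi-local and GSWNC. By Lemma~\ref{2.10}, \(J(R)\) is nil, and by Proposition~\ref{2.13}(ii) the semisimple Artinian ring \(\bar R:=R/J(R)\) is GSWNC; by Wedderburn--Artin, \(\bar R\cong\prod_{i=1}^{k}M_{n_i}(D_i)\), and Theorem~\ref{2.25} forces each \(n_i\le 2\). If \(k=1\) and \(n_1=1\), then \(\bar R\) is a division ring, so \(R\) is local with nil radical. If \(k=1\) and \(n_1=2\), then Lemma~\ref{2.26} makes \(D_1\) a strongly weakly nil-clean division ring; having only the idempotents \(0,1\) and no non-zero nilpotent, each of its elements lies in \(\{0,1,-1\}\), whence \(D_1\cong\mathbb{Z}_2\) or \(\mathbb{Z}_3\) and \(R/J(R)\cong M_2(\mathbb{Z}_2)\) or \(M_2(\mathbb{Z}_3)\). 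If \(k\ge2\), I would split off one Wedderburn block at a time and apply Proposition~\ref{2.5} to see that each \(M_{n_i}(D_i)\) is strongly weakly nil-clean; since no \(M_2\) over a division ring is strongly weakly nil-clean (\cite[Theorem~3.2]{1}), each \(n_i=1\) and, as before, \(D_i\cong\mathbb{Z}_2\) or \(\mathbb{Z}_3\), so \(\bar R\cong\mathbb{Z}_2^{\,a}\times\mathbb{Z}_3^{\,b}\) with \(a+b=k\ge2\). If \(b\ge2\), then necessarily \(k=2\) --- for \(k\ge3\), Proposition~\ref{2.6} permits at most one factor that is not strongly nil-clean, while \(\mathbb{Z}_3\) is not --- so \(\bar R\cong\mathbb{Z}_3\times\mathbb{Z}_3\). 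If \(b\le1\), then \(U(\bar R)\subseteq\{1,-1\}\), and since \(J(R)\) is nil this forces \(U(R)=\mathrm{Nil}(R)\pm1\), i.e. \(R\) is WUU, so Lemma~\ref{2.30} yields that the GSWNC ring \(R\) is strongly weakly nil-clean. These cases exhaust the five alternatives of (ii).

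I expect the main obstacle to be the classification of the semisimple GSWNC rings inside the implication (i) \(\Rightarrow\) (ii): one has to combine Theorem~\ref{2.25} (to discard blocks \(M_n\) with \(n\ge3\)), Lemma~\ref{2.26} together with Proposition~\ref{2.5} (to force each Wedderburn block to be \(\mathbb{Z}_2\), \(\mathbb{Z}_3\), or --- only possible when \(k=1\) --- \(M_2\) over one of these), and Proposition~\ref{2.6} (for the ``at most one \(\mathbb{Z}_3\) when \(k\ge3\)'' constraint); the delicate point is that \(\mathbb{Z}_3\times\mathbb{Z}_3\) is GSWNC yet not strongly weakly nil-clean, so it must appear as its own alternative rather than be absorbed into the last one. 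Passing from \(\bar R\) back to \(R\) in the last alternative via Lemma~\ref{2.30}, instead of an ad hoc lifting argument, keeps that step short.
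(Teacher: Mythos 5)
Your proof is correct and takes essentially the same route as the paper: reduce modulo the nil Jacobson radical (Lemma~\ref{2.10}, Proposition~\ref{2.13}), decompose $R/J(R)$ by Wedderburn--Artin, and use Theorem~\ref{2.25}, Lemma~\ref{2.26} and Propositions~\ref{2.5}--\ref{2.6} to force the blocks to be $\mathbb{Z}_2$, $\mathbb{Z}_3$, $M_2(\mathbb{Z}_2)$, $M_2(\mathbb{Z}_3)$ or $\mathbb{Z}_3\times\mathbb{Z}_3$. The only (harmless) deviations are that you lift the strongly weakly nil-clean alternative back to $R$ via the WUU property and Lemma~\ref{2.30}, where the paper instead cites \cite[Lemma 3.1, Theorem 2.2]{1}, and your handling of the single-block $n=2$ case through Lemma~\ref{2.26} plus the triviality of idempotents and nilpotents in a division ring is, if anything, a cleaner justification than the paper's citation at that point.
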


\begin{proof}
(ii) $\Longrightarrow$ (i). The proof is straightforward by combination of Lemma \ref{2.27}, Proposition \ref{2.13}. Also we know that $\mathbb{Z}_3 \times \mathbb{Z}_3$, $M_2(\mathbb{Z}_2)$ and $M_2(\mathbb{Z}_3)$ are GSWNC ring.\\
(i) $\Longrightarrow$ (ii). Since $R$ is semi-local, $R/J(R)$ is semi-simple, so we write $R/J(R) \cong \prod_{i=1}^{m}M_{n_i}(D_i)$, where each $D_i$ is a division ring. Moreover, the application of Proposition \ref{2.13} leads to $J(R)$ is nil, and $R/J(R)$ is a GSWNC ring. 

If $m = 1$, then Theorem \ref {2.26} applies to get that $n=1,2$. Now, if $n=1$, then $R/J(R) \cong D_1$. If, $n=2$, then $R/J(R) \cong M_2(D_1)$, so Theorem \ref {2.38} leads to either $D_1 \cong \mathbb{Z}_2$ or $D_1 \cong \mathbb{Z}_3$. Thus, $R/J(R) \cong M_2(\mathbb{Z}_2)$ or $R/J(R) \cong M_2(\mathbb{Z}_3)$. 

If $m = 2$, so $R/J(R) \cong M_{n_1}(D_1) \times M_{n_2}(D_2)$. As $R/J(R)$ is GSWNC, $M_{n_1}(D_1)$ and $M_{n_2}(D_2)$ are both strongly weakly nil-clean in view of Proposition \ref {2.5}. So, we arrive at $n_1=n_2=1$ in virtue of \cite[Corollary 3.1]{1} as well as $D_1 \cong \mathbb{Z}_2$ or $D_1 \cong \mathbb{Z}_3$, and $D_2 \cong \mathbb{Z}_2$ or $D_2 \cong \mathbb{Z}_3$. Consequently, one of the following isomorphisms holds: $R/J(R) \cong \mathbb{Z}_2 \times \mathbb{Z}_2,  \mathbb{Z}_2 \times \mathbb{Z}_3, \mathbb{Z}_3 \times \mathbb{Z}_3$. If  $R/J(R) \cong \mathbb{Z}_2 \times \mathbb{Z}_2,  \mathbb{Z}_2 \times \mathbb{Z}_3$, then $R/J(R)$ is strongly weakly nil-clean in accordance with \cite[Theorem 2.2]{1}. As $J(R)$ is nil, $R$ is strongly weakly nil-clean bearing in mind \cite[Lemma 3.1]{1}. 

If, however, $m > 2$, then Proposition \ref {2.6} employs to get that each $M_{n_i}(D_i)$ is strongly weakly nil-clean and at most one of them is not strongly nil-clean. Therefore, referring to \cite[Corollary 3.10]{2} and \cite[Proposition 2.1]{30}, for any $1 \le i \le m$, we detect $n_i=1$. Likewise, for any $1 \le i \le m$, we find that $D_i$ is strongly nil-clean and there exists an index $j$ such that $D_j$ is strongly weakly nil-clean. Finally, for any $1 \le i \le m$, we conclude that $D_i \cong \mathbb{Z}_2$ and that there exists $j$ with $D_j \cong \mathbb{Z}_2$ or $D_j \cong \mathbb{Z}_3$. So, in every case, $R/J(R)$ has to be strongly weakly nil-clean. But, as $J(R)$ is nil, $R$ is too strongly weakly nil-clean, and hence it is GSWNC, as wanted.
\end{proof}

Two more consequences are the following:

\begin{corollary}\label{2.39}
Let $R$ be a ring. Then, the following conditions are equivalent for a semi-simple ring:
\begin{enumerate}
\item
$R$ is a GSWNC ring.
\item
Either $R$ is a division ring, or $R\cong M_2(\mathbb{Z}_3)$, or $R\cong M_2(\mathbb{Z}_2)$, or $R\cong \mathbb{Z}_3 \times \mathbb{Z}_3$, or $R$  is a strongly weakly nil-clean ring.
\end{enumerate}
\end{corollary}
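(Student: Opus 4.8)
The plan is to obtain this as a direct specialization of Theorem~\ref{theorem 2.36} to the case $J(R)=0$. First I would record the two elementary facts that drive the reduction: a semi-simple ring $R$ has $J(R)=0$ and is, in particular, semi-local (indeed $R\cong\prod_{i=1}^{m}M_{n_i}(D_i)$ for suitable division rings $D_i$), and a local ring whose Jacobson radical vanishes is a division ring (in a local ring the non-units form the ideal $J(R)$, so $J(R)=0$ forces every nonzero element to be invertible). These are standard and need only a sentence each.

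For the implication (i) $\Rightarrow$ (ii), I would invoke Theorem~\ref{theorem 2.36}, which is applicable precisely because $R$ is semi-local. That theorem presents five alternatives, and since $J(R)=0$ every ``with a nil Jacobson radical'' clause collapses to an honest isomorphism: ``$R$ local with nil $J(R)$'' becomes ``$R$ local with $J(R)=0$'', i.e.\ $R$ is a division ring; ``$R/J(R)\cong M_2(\mathbb{Z}_3)$ with nil $J(R)$'' becomes $R\cong M_2(\mathbb{Z}_3)$, and likewise $R\cong M_2(\mathbb{Z}_2)$ and $R\cong\mathbb{Z}_3\times\mathbb{Z}_3$; the remaining alternative ``$R$ is strongly weakly nil-clean'' carries over unchanged. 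This is exactly the list displayed in~(ii).

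For the converse (ii) $\Rightarrow$ (i), I would check directly that each listed ring is GSWNC: a division ring is local with nil (indeed zero) Jacobson radical, hence GSWNC by Lemma~\ref{2.27}; the rings $M_2(\mathbb{Z}_2)$, $M_2(\mathbb{Z}_3)$ and $\mathbb{Z}_3\times\mathbb{Z}_3$ were already observed to be GSWNC (see Example~\ref{2.24} and the remarks following Corollary~\ref{2.3}); and a strongly weakly nil-clean ring is trivially GSWNC since \emph{every} element, in particular every non-unit, is strongly weakly nil-clean. Alternatively one can simply quote the (ii) $\Rightarrow$ (i) direction of Theorem~\ref{theorem 2.36}.

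Since the whole argument is a bookkeeping specialization, there is essentially no obstacle; the only point that needs an explicit word of justification is the collapse of the ``nil $J(R)$'' hypotheses to genuine isomorphisms, which rests entirely on the observation that a local ring with trivial Jacobson radical is a division ring.
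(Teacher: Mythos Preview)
Your proposal is correct and matches the paper's approach: the paper states this corollary immediately after Theorem~\ref{theorem 2.36} without proof, treating it as the direct specialization to $J(R)=0$, which is exactly what you carry out. Your explicit justification that a local ring with trivial Jacobson radical is a division ring, and your case-by-case verification of the converse, simply make explicit what the paper leaves implicit.
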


\begin{corollary}\label{2.40}
Let $R$ be a ring. Then, the following conditions are equivalent for an Artinian (in particular, a finite) ring:
\begin{enumerate}
\item
$R$ is a GSWNC ring.
\item
Either $R$ is a local ring with a nil Jacobson radical, or $R/J(R) \cong M_2(\mathbb{Z}_3)$ with a nil Jacobson radical, or $R/J(R) \cong M_2(\mathbb{Z}_2)$ with a nil Jacobson radical, or $R/J(R) \cong \mathbb{Z}_3 \times \mathbb{Z}_3$  with a nil Jacobson radical, or $R$  is a strongly weakly nil-clean ring.
\end{enumerate}
\end{corollary}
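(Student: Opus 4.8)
The plan is to obtain this as an immediate specialization of Theorem~\ref{theorem 2.36}. The first point to record is that every (left) Artinian ring $R$ is semi-local: indeed $R/J(R)$ is semisimple Artinian, which is exactly the defining property of a semi-local ring. Moreover, by the Hopkins--Levitzki theorem the Jacobson radical $J(R)$ of an Artinian ring is nilpotent, hence in particular nil. Since a finite ring is automatically Artinian, the parenthetical remark needs no separate argument.

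Given these two observations, I would simply invoke Theorem~\ref{theorem 2.36}. For the implication $(ii)\Rightarrow(i)$, each of the five listed alternatives is one of the alternatives appearing in Theorem~\ref{theorem 2.36}(ii), so the GSWNC property follows at once; here the clauses ``with a nil Jacobson radical'' are in fact redundant for Artinian rings because of the nilpotency noted above. For $(i)\Rightarrow(ii)$, since $R$ is semi-local we apply the implication $(i)\Rightarrow(ii)$ of Theorem~\ref{theorem 2.36} verbatim to land in one of the five cases.

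The argument is therefore essentially a bookkeeping reduction, and I anticipate no real obstacle. The only thing worth double-checking is that the list of cases in Corollary~\ref{2.40}(ii) matches the list in Theorem~\ref{theorem 2.36}(ii) symbol for symbol, and that no simplification is being silently assumed (for instance one might drop the ``nil Jacobson radical'' qualifiers entirely in the Artinian setting, which is legitimate but should be stated if done). If a self-contained phrasing is preferred, one can instead rerun the proof of Theorem~\ref{theorem 2.36} using that $R/J(R)\cong\prod_{i=1}^{m}M_{n_i}(D_i)$ by Artin--Wedderburn together with the nilpotency of $J(R)$, but this merely duplicates work already carried out there.
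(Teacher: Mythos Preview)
Your proposal is correct and follows essentially the same approach as the paper: Corollary~\ref{2.40} is stated there without proof as an immediate consequence of Theorem~\ref{theorem 2.36}, relying precisely on the fact that Artinian rings are semi-local. Your added remarks about Hopkins--Levitzki and the redundancy of the ``nil Jacobson radical'' clauses are valid observations but go slightly beyond what the paper bothers to spell out.
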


The next technicality is worthy of documentation.

\begin{lemma}\label{2.49}
Let $R$ be a GSWNC ring with $2 \in U(R)$ and, for every $u \in U(R)$, we have $u^2 = 1$. Then, $R$ is a commutative ring.
\end{lemma}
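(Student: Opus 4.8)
The hypothesis says every unit $u$ satisfies $u^2 = 1$, i.e. $U(R)$ has exponent $2$. Since $2 \in U(R)$, the ring is $2$-torsion-free in a strong sense (in fact $1/2 \in R$). The strategy is to show first that $R$ is reduced (no nonzero nilpotents), then to deduce that $R$ is commutative. For the first part, let $q \in \mathrm{Nil}(R)$; then $1 + q \in U(R)$, so $(1+q)^2 = 1$, which gives $2q + q^2 = 0$, hence $q(2 + q) = 0$. Since $q$ is nilpotent, $2 + q$ is a unit (as $2$ is a unit and $q \in J(R)$ because $R$ is GSWNC so $J(R)$ is nil by Lemma \ref{2.10}; actually more directly, $2+q = 2(1 + q/2)$ with $q/2$ nilpotent, so it is invertible), and multiplying $q(2+q)=0$ on the right by $(2+q)^{-1}$ yields $q = 0$. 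Thus $R$ is reduced.

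Now I exploit that $R$ is GSWNC and reduced. A reduced ring has no nonzero nilpotents, so $\mathrm{Nil}(R) = 0$; combined with Proposition \ref{2.11}, for every non-unit $a$ we get $a \pm a^2 = 0$, i.e. $a^2 = a$ or $a^2 = -a$. In a reduced ring every idempotent is central (reduced rings are abelian), so every non-unit $a$ is $\pm$(a central idempotent), hence central. It remains to handle the units. Given $u \in U(R)$, write $u = (u - 1) + 1$. Consider $u - 1$: if $u - 1$ is a non-unit then by the above it is central, so $u$ is central. If $u - 1 \in U(R)$, then also $(u-1)^2 = 1$, and together with $u^2 = 1$ we expand $(u-1)^2 = u^2 - u - u + 1$... but $R$ need not be commutative yet, so I instead write $(u-1)^2 = u^2 - 2u + 1$ is not automatic. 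Better: $1 = (u-1)(u-1) = u^2 - u - (u-1) \cdot 0$... Let me reorganize: from $u^2 = 1$ we have $u = u^{-1}$; from $(u-1)^2 = 1$ expand as $(u-1)(u-1) = u\cdot u - u\cdot 1 - 1\cdot u + 1 = 1 - 2u + 1 = 2 - 2u$, where I used $u^2 = 1$ (this product manipulation is legitimate: $u\cdot u$, $u\cdot 1$, $1\cdot u$, $1\cdot 1$ are all unambiguous). So $1 = 2 - 2u$, giving $2u = 1$, i.e. $u = 1/2$; but then $u^2 = 1/4 = 1$ forces $4 = 1$, i.e. $3 = 0$, and $u = 1/2 = 2^{-1}$ which under $3=0$ equals $2$, so $u = 2$; but $2 \in U(R)$ is central (it is $1+1$), contradiction avoided — in any case this pins $u$ down to a central element. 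Thus in all cases $u$ is central, so $U(R) \subseteq Z(R)$, and since $R$ is generated by $U(R) \cup (\text{central non-units})$ — indeed every element is either a unit or a non-unit — every element of $R$ is central, so $R$ is commutative.

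The main obstacle is the bookkeeping in the "$u-1$ is also a unit" sub-case: one must be careful not to assume commutativity while manipulating $(u-1)^2$, so each expansion has to be done term-by-term using only the ring axioms and the identities $u^2 = 1$, $2 \in U(R)$. A cleaner route that sidesteps this entirely: once $R$ is reduced, $R$ is abelian, and a reduced GSWNC ring has the property that every non-unit $a$ satisfies $a^2 = \pm a$; I then note that $U(R)$ of exponent $2$ plus $2 \in U(R)$ forces, via $(1+q)^2=1$ reasoning applied to suitable elements, that in fact $R/J(R)$ is a subdirect product of copies of $\mathbb{Z}_3$ and the Boolean part, both commutative, and $J(R) = 0$ since $R$ is reduced; hence $R$ itself is commutative. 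I would present whichever of these two finishes is shorter once the reduced-ness step is nailed down, as that step — showing $\mathrm{Nil}(R) = 0$ from $U(R)^2 = 1$ and $2 \in U(R)$ — is the real content and is short and robust.
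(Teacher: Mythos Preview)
Your proof is correct and takes a genuinely different route from the paper's. The paper never shows $R$ is reduced; instead it first proves that units commute with one another (from $u^{2}=v^{2}=(uv)^{2}=1$ one gets $uv=(uv)^{-1}=v^{-1}u^{-1}=vu$), then uses this to show idempotents are central (commuting the units $2e-1$ and $1+ea(1-e)$ yields $2ea(1-e)=0$, hence $ea(1-e)=0$ since $2\in U(R)$), observes that nilpotents commute because $1+\mathrm{Nil}(R)\subseteq U(R)$, and finishes with a four-case analysis on pairs $x,y$ using the GSWNC decomposition $x=q\pm e$, $y=p\pm f$. Your approach of first killing all nilpotents via $(1+q)^{2}=1\Rightarrow q(2+q)=0\Rightarrow q=0$ is sharper: once $R$ is reduced, abelian-ness is automatic, every non-unit becomes $\pm(\text{idempotent})$ by Proposition~\ref{2.11} and hence central, and the unit case collapses cleanly --- if $u-1\in U(R)$ then $(u-1)^{2}=u^{2}-2u+1=2-2u=1$ gives $u=2^{-1}$, which is central because $2$ is. Your write-up of that last sub-case is cluttered with the unnecessary $3=0$ digression; just stop at $u=2^{-1}\in Z(R)$. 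You should also drop the alternate ``subdirect product of copies of $\mathbb{Z}_{3}$'' sketch, which is vaguer and buys nothing once the first argument is tightened.
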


\begin{proof}
For any $u, v \in U(R)$, we have $u^2 = v^2 = (uv)^2 = 1$. Therefore, $uv = (uv)^{-1} = v^{-1}u^{-1} = vu$. Hence, the invertible elements commute with each other.
	
Now, we will show that $R$ is abelian. Indeed, for every idempotent $e$ in $R$ and $a \in R$, we know $2e-1\in U(R)$ and $(1+ea(1-e)) \in U(R)$. Since the invertible elements commute with each other, we have $2ea(1-e) = 2(1-e)ae = 0$. Since $2 \in U(R)$, we derive $ea(1-e) = (1-e)ae = 0$, which insures $ea = eae = ae$. Therefore, $R$ is abelian.
	
On the other side, since $1 + \text{Nil}(R) \subseteq U(R)$ and the invertible elements commute with each other, the nilpotent elements also commute with each other.
	
Now, let $x, y \in R$. We distinguish the following four basic cases:
\begin{enumerate}
\item $x, y \in U(R)$: Since the invertible elements commute with each other, it is clear that $xy = yx$.
\item $x, y \notin U(R)$: Since $R$ is a GSWNC ring, there exist $e, f \in \text{Id}(R)$ and $p, q \in \text{Nil}(R)$ such that $x = q \pm e$ and $y = p \pm f$. Then, we have four cases for $x$ and $y$. If $x=q+e$ and $y=p+f$ Thus,
\[xy = (q + e)(p + f) = qp + qf + ep + ef = pq + fq + pe + fe = (p + f)(q + e) = yx.\]
In the other three cases we will have similarly, $xy=yx$.
\item $x \in U(R)$ and $y \notin U(R)$: In this case, there exists $e \in \text{Id}(R)$ and $q \in \text{Nil}(R)$ such that $y = q \pm e$. Since $x(1 + q) = (1 + q)x$, we have $qx = xq$. Therefore,
\[xy = x(q \pm e) = xq \pm xe = qx \pm ex = (q \pm e)x = yx.\]
\item $x \notin U(R)$ and $y \in U(R)$: Similarly to case (iii), we can show that $xy = yx$.
\end{enumerate}
Then, $R$ is a commutative ring.
\end{proof}

Let $A$, $B$ be two rings and $M$, $N$ be $(A,B)$-bi-module and $(B,A)$-bi-module, respectively. Also, we consider the bilinear maps $\phi :M\otimes_{B}N\rightarrow A$ and $\psi:N\otimes_{A}M\rightarrow B$ that apply to the following properties.
$$Id_{M}\otimes_{B}\psi =\phi \otimes_{A}Id_{M},Id_{N}\otimes_{A}\phi =\psi \otimes_{B}Id_{N}.$$
For $m\in M$ and $n\in N$, define $mn:=\phi (m\otimes n)$ and $nm:=\psi (n\otimes m)$. Now the $4$-tuple $R=\begin{pmatrix}
	A & M\\
	N & B
\end{pmatrix}$ becomes to an associative ring with obvious matrix operations that is called a {\it Morita context ring}. Denote two-side ideals $Im \phi$ and $Im \psi$ to $MN$ and $NM$, respectively, that are called the {\it trace ideals} of the Morita context (compare with \cite{8} as well).

\medskip

We now proceed by showing a few assertions on various matrix extensions.

\begin{proposition}\label{2.41}
Let $R=\left(\begin{array}{ll}A & M \\ N & B\end{array}\right)$ be a Morita context ring such that $MN$ and $NM$ are nilpotent ideals of $A$ and $B$, respectively. If $R$ is a GSWNC ring, then $A$ and $B$ are strongly weakly nil-clean rings. The converse holds provided one of the $A$ or $B$ is strongly nil-clean and the other is strongly weakly nil-clean.
\end{proposition}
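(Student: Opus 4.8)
The plan is to work with the ideal
\[
I=\begin{pmatrix} MN & M\\ N & NM\end{pmatrix}
\]
of $R$ and to reduce the whole problem modulo $I$. First I would check that $I$ is genuinely a two-sided ideal of $R$: this is a routine block computation using only that $MN\trianglelefteq A$, that $NM\trianglelefteq B$, and the defining compatibilities of $\phi$ and $\psi$ (for instance, the $(1,1)$-entry of $\begin{pmatrix} a & x\\ y & b\end{pmatrix}\begin{pmatrix} u & m\\ n & v\end{pmatrix}$ with $u\in MN$, $m\in M$, $n\in N$, $v\in NM$ equals $au+\phi(x\otimes n)\in MN$, and the three remaining entries are handled the same way). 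Computing the multiplication of $R/I$ then yields a ring isomorphism $R/I\cong A/MN\times B/NM$, because modulo $I$ all off-diagonal entries are annihilated while any product of two off-diagonal entries already lies in $MN$ or in $NM$. The decisive point, and the only place where \emph{nilpotency} of $MN$ and $NM$ is used, is that $I$ is a nilpotent, hence nil, ideal of $R$; in fact $I^{2s}=0$ once $(MN)^s=(NM)^s=0$.

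Granting this, the forward implication is immediate. If $R$ is GSWNC, then, $I$ being nil, Proposition~\ref{2.13}(i) shows $R/I\cong A/MN\times B/NM$ is GSWNC, whence Proposition~\ref{2.5} gives that both $A/MN$ and $B/NM$ are strongly weakly nil-clean. Since $MN$ is a nil ideal of $A$ and $NM$ is a nil ideal of $B$, lifting strong weak nil-cleanness through a nil ideal, i.e.\ \cite[Lemma~3.1]{1}, yields that $A$ and $B$ are themselves strongly weakly nil-clean.

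For the converse, assume (without loss of generality) that $A$ is strongly nil-clean and $B$ is strongly weakly nil-clean. Both properties descend to quotients -- a decomposition $a=e+q$ with $e\in{\rm Id}(A)$, $q\in{\rm Nil}(A)$, $eq=qe$ projects to one in $A/MN$, and a weak decomposition $b=q\pm e$ projects to one in $B/NM$ -- so $A/MN$ is strongly nil-clean and $B/NM$ is strongly weakly nil-clean. By \cite[Theorem~2.2]{1}, the product $R/I\cong A/MN\times B/NM$ of a strongly nil-clean ring and a strongly weakly nil-clean ring is strongly weakly nil-clean, in particular GSWNC; and since $I$ is nil, Proposition~\ref{2.13}(i) lifts this back up to give that $R$ is GSWNC.

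The main obstacle is thus establishing the nilpotency of $I$. I would do it by writing each element of $I$ as the sum of its four block pieces (an $MN$-piece, an $M$-piece, an $N$-piece, an $NM$-piece) and expanding a product of $k$ elements of $I$ into monomials in these pieces; consecutive pieces collapse -- an $M$-piece times an $N$-piece becomes a member of $MN$, an $N$-piece times an $M$-piece becomes a member of $NM$, and an $MN$- or $NM$-piece stays within that part or passes into $M$ (resp.\ $N$) -- so that an easy induction on $k$ confines the $(1,1)$- and $(2,2)$-entries of $I^{k}$ to $(MN)^{\lceil k/2\rceil}$ and $(NM)^{\lceil k/2\rceil}$, and the off-diagonal entries to $(MN)^{\lfloor k/2\rfloor}M$ and $(NM)^{\lfloor k/2\rfloor}N$ respectively. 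All of these vanish as soon as the exponents reach the nilpotency indices of $MN$ and $NM$, giving $I^{2s}=0$. Everything else in the argument is bookkeeping with the quoted results.
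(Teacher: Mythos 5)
Your proof is correct, but it takes a genuinely different route from the paper's. The paper reduces modulo the Jacobson radical: since $MN$ and $NM$ are nilpotent they lie in $J(A)$ and $J(B)$, and by the description of the radical of a Morita context (cited from \cite{9}) one has $J(R)=\left(\begin{smallmatrix} J(A) & M\\ N & J(B)\end{smallmatrix}\right)$, hence $R/J(R)\cong A/J(A)\times B/J(B)$; the forward implication then runs through Proposition \ref{2.13}, Proposition \ref{2.5} and \cite[Lemma 3.1]{1} at the level of $J$, and the converse passes through $A/J(A)$ and $B/J(B)$ (using \cite[Lemma 3.1]{1} and \cite[Corollary 3.17]{2}) and then verifies by hand that $J(R)$ is nil, via a block-power computation showing $r^{\,n+1}$ lands in $\left(\begin{smallmatrix} MN & M\\ N & NM\end{smallmatrix}\right)$ and that the $2p$-th power of that block set vanishes. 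You instead promote that very block set to an ideal $I$, prove directly that it is nilpotent with $R/I\cong A/MN\times B/NM$, and run the whole argument modulo $I$. This makes the proof more self-contained: you avoid invoking the external formula for $J$ of a Morita context and the radical-quotient stability facts, at the modest cost of checking that $I$ is an ideal (automatic in any case, being the kernel of the evident surjection onto $A/MN\times B/NM$) and of your exponent bookkeeping, which is sound — indeed $I^{2s}=0$ once $(MN)^s=(NM)^s=0$, by essentially the same computation the paper performs inside its converse, using $M(NM)=(MN)M$ and $N(MN)=(NM)N$. Your supporting citations do the same work as the paper's: Proposition \ref{2.13}, Proposition \ref{2.5}, \cite[Lemma 3.1]{1} for lifting modulo nil ideals, and \cite[Theorem 2.2]{1} in place of the paper's \cite[Lemma 2.1]{1} for the product step; your conclusion in the converse is even slightly stronger, since $R$ comes out strongly weakly nil-clean, hence GSWNC.
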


\begin{proof}
Since, $MN$ and $NM$ are nilpotent ideals of $A$ and $B$, respectively, $MN \subseteq J(A)$ and $NM\subseteq J(B)$. Therefore, in view of \cite{9}, we argue that $J(R)=\begin{pmatrix}
		J(A) & M \\
		N & J(B)
\end{pmatrix}$ and hence $\dfrac{R}{J(R)}\cong \dfrac{A}{J(A)}\times \dfrac{B}{J(B)}$.
Since $R$ is a GSWNC ring, then the factor $\dfrac{R}{J(R)}$ is a GSWNC ring and $J(R)$ is nil consulting with Proposition \ref{2.13}, so it follows that both $\dfrac{A}{J(A)}$ and $\dfrac{B}{J(B)}$ are strongly weakly nil-clean by Propposition \ref {2.5}. As $J(R)$ is nil, $J(A)$ and $J(B)$ are nil too. Thus, $A$ and $B$ are strongly weakly nil-clean by \cite[Lemma 3.1]{1}.\\
For the converse, let $A$ is a strongly nil-clean ring and $B$ is a strongly weakly nil-clean ring, we conclude that $A/J(A)$ is strongly nil-clean and $B/J(B)$ is strongly weakly nil-clean by a combination of \cite[Lemma 3.1]{1} and \cite[Corollary 3.17]{2}. So, $R/J(R)$ is strongly weakly nil-clean by \cite[Lemma 2.1]{1}. It then suffices to show that $J(R)$ is nil. To this target, suppose
	$r=\begin{pmatrix}
		a & m\\
		n & b
\end{pmatrix}\in J(R)$. Then, $a\in J(A)$, $b\in J(B)$. We know that both $J(A)$ and $J(B)$ are nil. Thus, we can find $n\in \mathbb{N}$ such that $a^{n}=0$ in $A$ and $b^{n}=0$ in $B$. So,
$$\begin{pmatrix}
		a & m\\
		n & b
\end{pmatrix}^{n+1}\subseteq \begin{pmatrix}
		MN & M\\
		N & NM
\end{pmatrix}.$$
Clearly,
	$$ \begin{pmatrix}
		MN & M\\
		N & NM
\end{pmatrix}^{2}=\begin{pmatrix}
		MN & (MN)M \\
		(NM)N & NM
\end{pmatrix}.$$
Moreover, for any $j\in \mathbb{N}$, one easily checks that
$$\begin{pmatrix}
		MN & M\\
		N & NM
\end{pmatrix}^{2j}=\begin{pmatrix}
		MN & (MN)M\\
		(NM)N  & NM
\end{pmatrix}^{j}=\begin{pmatrix}
		(MN)^{j} & (MN)^{j}M\\
		(NM)^{j}N & (NM)^{j}
\end{pmatrix}.$$
By hypothesis, we may assume that $(MN)^{p}=0$ in $A$ and $(NM)^{p}=0$ in $B$. Therefore,
$$\begin{pmatrix}
		MN & M\\
		N & NM
\end{pmatrix}^{2p}=0.$$
Consequently,
$\begin{pmatrix}
		a & m\\
		n & b
\end{pmatrix}^{2p(n+1)}=0$, and so $J(R)$ is indeed nil, as desired.	
\end{proof}

Now, let $R$, $S$ be two rings, and let $M$ be an $(R,S)$-bi-module such that the operation $(rm)s = r(ms$) is valid for all $r \in R$, $m \in M$ and $s \in S$. Given such a bi-module $M$, we can set

$$
{\rm T}(R, S, M) =
\begin{pmatrix}
	R& M \\
	0& S
\end{pmatrix}
=
\left\{
\begin{pmatrix}
	r& m \\
	0& s
\end{pmatrix}
: r \in R, m \in M, s \in S
\right\},
$$
where it forms a ring with the usual matrix operations. The so-stated formal matrix ${\rm T}(R, S, M)$ is called a {\it formal triangular matrix ring}. In Proposition \ref{2.41}, if we set $N =\{0\}$, then we will obtain the following corollary.

\begin{corollary}\label{2.42}
Let $R,S$ be rings and let $M$ be an $(R,S)$-bi-module. If ${\rm T}(R,S,M)$ is GSWNC, then $R$, $S$ are strongly weakly nil-clean.
The converse holds if one of the rings $R$ or $S$ is strongly nil-clean and the other is strongly weakly nil-clean.
\end{corollary}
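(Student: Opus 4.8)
The plan is to obtain the corollary as the degenerate case $N=\{0\}$ of Proposition~\ref{2.41}. Setting $A:=R$, $B:=S$ and $N:=\{0\}$, the bilinear maps $\phi\colon M\otimes_{B}N\to A$ and $\psi\colon N\otimes_{A}M\to B$ are necessarily the zero maps, and the coherence identities $Id_{M}\otimes_{B}\psi=\phi\otimes_{A}Id_{M}$ and $Id_{N}\otimes_{A}\phi=\psi\otimes_{B}Id_{N}$ hold trivially because every tensor factor involving $N$ vanishes. Under this specialization the Morita context ring $\begin{pmatrix} A & M\\ N & B\end{pmatrix}$, with its matrix multiplication $\begin{pmatrix} a & m\\ 0 & b\end{pmatrix}\begin{pmatrix} a' & m'\\ 0 & b'\end{pmatrix}=\begin{pmatrix} aa' & am'+mb'\\ 0 & bb'\end{pmatrix}$, is literally the formal triangular matrix ring ${\rm T}(R,S,M)$. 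First I would record this identification explicitly.

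Next I would observe that the trace ideals $MN=\im\phi$ and $NM=\im\psi$ are both the zero ideal, hence nilpotent, so the standing hypotheses of Proposition~\ref{2.41} are automatically met. Consequently the forward implication of Proposition~\ref{2.41} applies verbatim: if ${\rm T}(R,S,M)$ is GSWNC then $R$ and $S$ are strongly weakly nil-clean. For the converse, the additional assumption in the corollary — that one of $R$, $S$ is strongly nil-clean and the other strongly weakly nil-clean — is precisely the hypothesis under which the converse part of Proposition~\ref{2.41} is stated, so that direction also transfers directly, yielding that ${\rm T}(R,S,M)$ is GSWNC.

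Since every step is an instance of an already-proved result, there is essentially no obstacle here; the only point deserving a line of care is the verification that putting $N=\{0\}$ really collapses the Morita context onto the formal triangular matrix ring with the intended ring structure, which is immediate from the multiplication formula above. (If one preferred to avoid citing Proposition~\ref{2.41}, an equally short route is available: ${\rm T}(0,0,M)=\left\{\begin{pmatrix}0 & m\\ 0 & 0\end{pmatrix}: m\in M\right\}$ is a square-zero two-sided ideal of ${\rm T}(R,S,M)$ with quotient isomorphic to $R\times S$, so Proposition~\ref{2.13} reduces the problem to $R\times S$, whereupon Proposition~\ref{2.5} gives the forward direction and \cite[Lemma~3.1]{1} together with \cite[Lemma~2.1]{1} gives the converse; but invoking Proposition~\ref{2.41} is the cleanest option.)
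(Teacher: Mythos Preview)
Your proof is correct and follows exactly the paper's approach: the paper obtains the corollary by setting $N=\{0\}$ in Proposition~\ref{2.41}, which is precisely what you do (with the added care of verifying that the Morita context collapses to ${\rm T}(R,S,M)$ and that the trace ideals vanish). Your parenthetical alternative via Proposition~\ref{2.13} and the quotient $R\times S$ is also valid, but the main argument matches the paper's.
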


Given a ring $R$ and a central elements $s$ of $R$, the $4$-tuple $\begin{pmatrix}
	R & R\\
	R & R
\end{pmatrix}$ becomes a ring with addition component-wise and with multiplication defined by
$$\begin{pmatrix}
	a_{1} & x_{1}\\
	y_{1} & b_{1}
\end{pmatrix}\begin{pmatrix}
	a_{2} & x_{2}\\
	y_{2} & b_{2}
\end{pmatrix}=\begin{pmatrix}
	a_{1}a_{2}+sx_{1}y_{2} & a_{1}x_{2}+x_{1}b_{2} \\
	y_{1}a_{2}+b_{1}y_{2} & sy_{1}x_{2}+b_{1}b_{2}
\end{pmatrix}.$$
This ring is denoted by ${\rm K}_{s}(R)$. A Morita context
$\begin{pmatrix}
	A & M\\
	N & B
\end{pmatrix}$ with $A=B=M=N=R$ is called a {\it generalized matrix ring} over $R$. It was observed by Krylov in \cite{10} that a ring $S$ is a generalized matrix ring over $R$ if, and only if, $S={\rm K}_{s}(R)$ for some $s\in {\rm Z}(R)$. Here $MN=NM=sR$, so $MN\subseteq J(A)\Longleftrightarrow s\in J(R)$, $NM\subseteq J(B)\Longleftrightarrow s\in J(R)$, and $MN$, $NM$  are nilpotent $\Longleftrightarrow s$ is a nilpotent.

\begin{corollary}\label{2.43}
Let $R$ be a ring and $s\in Z(R)\cap Nil(R)$. If $K_{s}(R)$ is a GSWNC ring, then $R$ is a strongly weakly nil-clean ring. The converse holds, provided $R$ is a strongly nil-clean ring.
\end{corollary}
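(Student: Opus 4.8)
The statement to prove is Corollary~\ref{2.43}: for a ring $R$ with $s \in Z(R) \cap \mathrm{Nil}(R)$, if $K_s(R)$ is GSWNC then $R$ is strongly weakly nil-clean, and conversely if $R$ is strongly nil-clean then $K_s(R)$ is GSWNC. The strategy is to recognize $K_s(R)$ as a Morita context ring $\left(\begin{smallmatrix} A & M \\ N & B \end{smallmatrix}\right)$ with $A=B=M=N=R$, for which $MN=NM=sR$, and then simply invoke Proposition~\ref{2.41}. So the bulk of the work is verifying that the hypotheses of Proposition~\ref{2.41} are met.

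First I would observe that, since $s \in Z(R)$, the ring $K_s(R)$ is indeed the generalized matrix ring described just before the statement, hence a Morita context ring with $A=B=R$, $M=N=R$, and trace ideals $MN=NM=sR$. Since $s \in \mathrm{Nil}(R)$, say $s^k = 0$, we get $(sR)^k = s^k R = 0$ (using centrality of $s$), so $MN$ and $NM$ are nilpotent ideals of $A$ and $B$ respectively. This is exactly the hypothesis of Proposition~\ref{2.41}.

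For the forward direction: if $K_s(R)$ is GSWNC, Proposition~\ref{2.41} immediately gives that $A = R$ and $B = R$ are strongly weakly nil-clean, which is the desired conclusion. For the converse: if $R$ is strongly nil-clean, then taking $A = R$ strongly nil-clean and $B = R$ strongly weakly nil-clean (every strongly nil-clean ring is strongly weakly nil-clean), the converse part of Proposition~\ref{2.41} applies to yield that $K_s(R)$ is GSWNC. Hence the proof is essentially a one-line deduction once the Morita context identification is made explicit.

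I do not anticipate a genuine obstacle here; the only thing requiring a moment of care is confirming that the trace-ideal powers vanish, i.e. that $(sR)^k = 0$ follows from $s^k = 0$ together with $s$ being central — which is immediate — and noting that a strongly nil-clean ring is in particular strongly weakly nil-clean so that Proposition~\ref{2.41}'s converse hypothesis (``one of $A$ or $B$ is strongly nil-clean and the other strongly weakly nil-clean'') is satisfied with $A = B = R$. I would write the proof as: identify $K_s(R)$ with the Morita context over $R$, note $MN = NM = sR$ is nilpotent since $s$ is a central nilpotent, and apply Proposition~\ref{2.41} in both directions.
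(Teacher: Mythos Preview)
Your proposal is correct and matches the paper's approach exactly: the paper gives no explicit proof for this corollary because the text immediately preceding it identifies $K_s(R)$ as a Morita context $\begin{pmatrix} R & R \\ R & R \end{pmatrix}$ with $MN=NM=sR$ nilpotent when $s$ is a central nilpotent, so the result is a direct application of Proposition~\ref{2.41}. Your observation that strongly nil-clean implies strongly weakly nil-clean (so that the converse hypothesis of Proposition~\ref{2.41} is met with $A=B=R$) is precisely the point.
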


Following Tang and Zhou (cf. \cite{11}), for any $n\geq 2$ and for $s\in Z(R)$, the $n\times n$ formal matrix ring over $R$ defined by $s$, and denoted by $M_{n}(R;s)$, is the set of all $n\times n$ matrices over $R$ with usual addition of matrices and with multiplication defined below:

\noindent For $(a_{ij})$ and $(b_{ij})$ in $M_{n}(R;s)$,
$$(a_{ij})(b_{ij})=(c_{ij}), \quad \text{where} ~~ (c_{ij})=\sum s^{\delta_{ikj}}a_{ik}b_{kj}.$$
Here, $\delta_{ijk}=1+\delta_{ik}-\delta_{ij}-\delta_{jk}$, where $\delta_{jk}$, $\delta_{ij}$, $\delta_{ik}$ are the Kroncker delta symbols.

\begin{corollary}\label{2.44}
Let $R$ be a ring and $s\in Z(R)\cap Nil(R)$. If $M_{n}(R;s)$ is a GSWNC ring, then $R$ is a strongly weakly nil-clean ring. The converse holds, provided $R$ is a strongly nil-clean ring.
\end{corollary}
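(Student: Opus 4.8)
The plan is to follow the pattern of Corollary~\ref{2.43}: exhibit $M_n(R;s)$ as a Morita context ring over smaller twisted matrix rings and then apply Proposition~\ref{2.41}. Split $\{1,\dots,n\}=\{1\}\cup\{2,\dots,n\}$ and write
$$M_n(R;s)=\begin{pmatrix} R & M\\ N & M_{n-1}(R;s)\end{pmatrix},$$
with $R$ in the $(1,1)$-corner, a copy of $M_{n-1}(R;s)$ as the lower-right block, $M\cong R^{n-1}$ (the first row off the diagonal), $N\cong R^{n-1}$ (the first column off the diagonal), and the bimodule actions and bilinear maps $\phi,\psi$ induced by the twisted multiplication. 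From the multiplication rule, every summand of the product $MN$ comes from a summation index $k\neq1$ and hence carries a positive power of $s$, so $MN=\im\phi\subseteq sR$; similarly $NM=\im\psi\subseteq s\,M_{n-1}(R;s)$. As $s$ is central and nilpotent, both $sR$ and $s\,M_{n-1}(R;s)$ are nilpotent ideals (a $k$-fold product lies in $s^kR$, resp.\ has all entries in $s^kR$), so the trace ideals $MN,NM$ are nilpotent. Proposition~\ref{2.41} now applies and immediately gives the first implication: if $M_n(R;s)$ is GSWNC, then both corners $R$ and $M_{n-1}(R;s)$ are strongly weakly nil-clean, in particular $R$ is.

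For the converse, assume $R$ is strongly nil-clean. The converse half of Proposition~\ref{2.41} (one corner strongly nil-clean, the other strongly weakly nil-clean) will finish things once we know $M_{n-1}(R;s)$ is strongly weakly nil-clean, so it suffices to prove, by induction on $m$, that $M_m(R;s)$ is strongly nil-clean whenever $R$ is strongly nil-clean and $s\in Z(R)\cap{\rm Nil}(R)$; the base $m=1$ is trivial. For the step, note that $s\,M_m(R;s)$ is a nil ideal of $M_m(R;s)$ and that modding out by it sends $s$ to $0$, so $M_m(R;s)/s\,M_m(R;s)\cong M_m(R/sR;0)$. The twisted ring $M_m(R/sR;0)$ is degenerate: its diagonal multiplies componentwise, so projecting onto the diagonal is a ring map onto $(R/sR)^{m}$ whose kernel — spanned by the strictly upper- and strictly lower-triangular entries — is a nilpotent ideal (those two pieces annihilate each other and each is nilpotent of index $\le m$). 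Now $R/sR$ is a homomorphic image of the strongly nil-clean ring $R$, hence strongly nil-clean, hence so is $(R/sR)^m$; climbing back up past the nilpotent kernel and then past the nil ideal $s\,M_m(R;s)$ via Proposition~\ref{2.13} yields that $M_m(R;s)$ is strongly nil-clean. Taking $m=n-1$ and applying Proposition~\ref{2.41} once more shows $M_n(R;s)$ is GSWNC.

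The genuine obstacle is this converse propagation — that a twisted matrix ring over a strongly nil-clean ring with central nilpotent parameter stays strongly nil-clean. It hinges on correctly identifying a nil ideal of $M_m(R;s)$ whose quotient is, modulo a further nilpotent ideal, a finite power of $R/sR$; once that is in place, the argument is routine bookkeeping with Proposition~\ref{2.13} and the Morita-context criterion of Proposition~\ref{2.41}. The forward implication, by contrast, costs only the block decomposition above and a single invocation of Proposition~\ref{2.41}.
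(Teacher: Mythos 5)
Your argument is correct, and its skeleton (block Morita decomposition of $M_n(R;s)$ with corners $R$ and $M_{n-1}(R;s)$, nilpotent trace ideals because they land in $sR$ resp.\ $sM_{n-1}(R;s)$, then Proposition~\ref{2.41}) is the same as the paper's; where you genuinely diverge is in how the converse is propagated. The paper argues by induction on $n$, anchoring at $n=2$ via the isomorphism $M_2(R;s)\cong K_{s^2}(R)$ and Corollary~\ref{2.43}, and then invokes ``the induction hypothesis and Proposition~\ref{2.41}''; note, however, that the induction hypothesis only yields that $M_{n-1}(R;s)$ is GSWNC, while the converse half of Proposition~\ref{2.41} asks for one corner strongly nil-clean and the other strongly \emph{weakly} nil-clean, so the paper's sketch implicitly needs the stronger fact you prove outright: that $M_m(R;s)$ is strongly nil-clean whenever $R$ is strongly nil-clean and $s\in Z(R)\cap{\rm Nil}(R)$. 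Your direct proof of that fact --- passing modulo the nilpotent ideal $sM_m(R;s)$ to $M_m(R/sR;0)$, then modulo the square-zero kernel of the diagonal projection onto $(R/sR)^m$ --- is a clean replacement for the paper's appeal to Corollary~\ref{2.43} plus induction, and it also makes the forward implication a one-line application of Proposition~\ref{2.41} with no induction at all. Two small repairs: lifting strong nil-cleanness back across nil ideals is not Proposition~\ref{2.13} (that statement concerns GSWNC); cite instead \cite[Corollary~3.17]{2} together with the fact that an extension of a nil ideal by a nil radical is nil. Also, the kernel of the diagonal projection of $M_m(R/sR;0)$ is in fact square-zero as a whole (since $c_{ij}=a_{ii}b_{ij}+a_{ij}b_{jj}$ and the diagonals vanish), which is simpler and more accurate than the ``upper and lower triangular pieces annihilate each other'' phrasing.
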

\begin{proof}
If $n = 1$, then $M_n(R;s) = R$. So, in this case, there is nothing to prove. Let $n=2$. By the definition of $M_n(R;s)$, we have $M_2 (R;s) \cong K_{s^2} (R)$. Apparently, $s^2 \in Nil(R) \cap Z(R)$, so the claim holds for $n = 2$ with the help of Corollary \ref{2.43}.
	
To proceed by induction, assume now that $n>2$ and that the claim holds for $M_{n-1} (R;s)$. Set $A := M_{n-1} (R;s)$. Then, $M_n (R;s) =
\begin{pmatrix}
		A & M\\
		N & R
\end{pmatrix}$
is a Morita context, where $$M =
\begin{pmatrix}
		M_{1n}\\
		\vdots\\
		M_{n-1, n}
\end{pmatrix}
	\quad \text{and} \quad  N = (M_{n1} \dots M_{n, n-1})$$ with $M_{in} = M_{ni} = R$ for all $i = 1, \dots, n-1,$ and
\begin{align*}
		&\psi: N \otimes M \rightarrow N, \quad n \otimes m \mapsto snm\\
		&\phi : M \otimes N \rightarrow M, \quad  m \otimes n \mapsto smn.
\end{align*}
Besides, for $x =
\begin{pmatrix}
		x_{1n}\\
		\vdots\\
		x_{n-1, n}
\end{pmatrix}
	\in M$ and $y = (y_{n1} \dots y_{n, n-1}) \in N$, we write $$xy =
\begin{pmatrix}
		s^2x_{1n}y_{n1} & sx_{1n}y_{n2} & \dots & sx_{1n}y_{n, n-1}\\
		sx_{2n}y_{n1} & s^2x_{2n}y_{n2} & \dots & sx_{2n}y_{n, n-1}\\
		\vdots & \vdots &\ddots & \vdots\\
		sx_{n-1, n}y_{n1} & sx_{n-1, n}y_{n2} & \dots & s^2x_{n-1, n}y_{n, n-1}
\end{pmatrix} \in sA$$ and $$yx = s^2y_{n1}x_{1n} + s^2y_{n2}x_{2n} + \dots + s^2y_{n, n-1}x_{n-1, n} \in s^2 R.$$ Since $s$ is nilpotent, we see that $MN$ and $NM$ are nilpotent too. Thus, we obtain that $$\frac{M_n (R; s)}{J(M_n (R; s))} \cong \frac{A}{J (A)} \times \frac{R}{J (R)}.$$ Finally, the induction hypothesis and Proposition \ref{2.41} yield the claim after all.
\end{proof}

A Morita context $\begin{pmatrix}
	A & M\\
	N & B
\end{pmatrix}$ is called {\it trivial}, if the context products are trivial, i.e., $MN=0$ and $NM=0$. We now have
$$\begin{pmatrix}
	A & M\\
	N & B
\end{pmatrix}\cong T(A\times B, M\oplus N),$$
where
$\begin{pmatrix}
	A & M\\
	N & B
\end{pmatrix}$ is a trivial Morita context consulting with \cite{12}.

\begin{corollary}\label{2.45}
If the trivial Morita context
$\begin{pmatrix}
		A & M\\
		N & B
\end{pmatrix}$ is a GSWNC ring, then $A$, $B$ are strongly weakly nil-clean rings. The converse holds if one of the rings $A$ or $B$ is strongly nil-clean and the other is strongly weakly nil-clean.
\end{corollary}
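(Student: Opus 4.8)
The plan is to exploit the identification $\begin{pmatrix} A & M\\ N & B\end{pmatrix}\cong T(A\times B,\,M\oplus N)$ recorded just above the statement, which converts an assertion about a trivial Morita context into one about a trivial extension, and then to feed this into the machinery already in place. First I would invoke Corollary \ref{2.17}: for any ring $U$ and any bi-module $V$ over $U$, the trivial extension $T(U,V)$ is GSWNC if, and only if, $U$ is GSWNC. Applying this with $U=A\times B$ and $V=M\oplus N$ shows that the trivial Morita context is GSWNC precisely when $A\times B$ is GSWNC, and this reduces everything to facts about direct products already proved.

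For the forward implication, suppose the trivial Morita context is GSWNC. By the previous step, $A\times B$ is a GSWNC ring, and then Proposition \ref{2.5} yields at once that both $A$ and $B$ are strongly weakly nil-clean, which is exactly the claimed conclusion.

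For the converse, assume without loss of generality that $A$ is strongly nil-clean and $B$ is strongly weakly nil-clean. Then $A\times B$ is strongly weakly nil-clean by \cite[Theorem 2.2]{1}, hence in particular each of its elements, and so each of its non-invertible elements, is strongly weakly nil-clean; that is, $A\times B$ is GSWNC. Transporting this back through the isomorphism $\begin{pmatrix} A & M\\ N & B\end{pmatrix}\cong T(A\times B,\,M\oplus N)$ together with Corollary \ref{2.17} shows that the trivial Morita context is GSWNC, as desired.

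I should also note that the entire statement is a direct instance of Proposition \ref{2.41}: in a trivial Morita context one has $MN=0$ and $NM=0$, which are (trivially) nilpotent ideals of $A$ and $B$ respectively, so the hypotheses of Proposition \ref{2.41} are met with the same supplementary condition on the converse. This gives an alternative one-line derivation. The only point needing a modicum of care is the routine bookkeeping that $M\oplus N$ genuinely carries an $(A\times B)$-bi-module structure making the displayed ring isomorphism valid, plus the elementary observation that a strongly weakly nil-clean ring is automatically GSWNC; neither presents a real obstacle, so I expect no hard step in this corollary.
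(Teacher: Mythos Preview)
Your proposal is correct and follows essentially the same route as the paper: the paper also passes through the isomorphism $\begin{pmatrix} A & M\\ N & B\end{pmatrix}\cong T(A\times B,\,M\oplus N)$ and then invokes Corollary~\ref{2.17} together with Proposition~\ref{2.5}. If anything, you are slightly more explicit about the converse (citing \cite[Theorem 2.2]{1} to obtain that $A\times B$ is strongly weakly nil-clean), which the paper leaves implicit; your remark that Proposition~\ref{2.41} already covers the result verbatim is also correct.
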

\begin{proof}
It is apparent to see that the isomorphisms
$$\begin{pmatrix}
		A & M\\
		N & B
\end{pmatrix} \cong T(A\times B,M\oplus N) \cong \begin{pmatrix}
		A\times B & M\oplus N\\
		0 & A \times B
\end{pmatrix}$$ are fulfilled. Then, the rest of the proof follows combining Corollary \ref{2.17} and Proposition \ref{2.5}.
\end{proof}

\section{GSWNC Group Rings}

Let $R$ be a ring, $G$ a group, and, as usual, we denote by $RG$ the group ring of $G$ over $R$. Recall that a group $G$ is a {\it $p$-group} if every element of $G$ has order that is a power of the prime number $p$. Also, we recollect that a group is {\it locally finite} if each finitely generated subgroup is finite.

The homomorphism $\varepsilon :RG\rightarrow R$, defined by $\varepsilon (\displaystyle\sum_{g\in G}a_{g}g)=\displaystyle\sum_{g\in G}a_{g}$, is called the {\it augmentation map} of $RG$ and its kernel, denoted by $\Delta (RG)$, is called the {\it augmentation ideal} of $RG$.

\medskip

Our first three technicalities here are the following.

\begin{lemma} \label{3.1}
If $RG$ is a GSWNC ring, then $R$ is GSWNC.
\end{lemma}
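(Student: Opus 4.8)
The statement is the ``descent'' direction: if the group ring $RG$ is GSWNC, then the coefficient ring $R$ is GSWNC. The natural strategy is to realize $R$ as a quotient of $RG$ by a nil ideal, so that Proposition~\ref{2.13}(i) applies directly. First I would invoke the augmentation map $\varepsilon : RG \to R$, whose kernel is the augmentation ideal $\Delta(RG)$; since $\varepsilon$ is surjective with $RG/\Delta(RG) \cong R$, it would suffice to show that $\Delta(RG)$ is a nil ideal. That, however, is false in general (e.g.\ when $G$ is infinite or has elements of order invertible in $R$), so the cleaner route is to use $R$ as a \emph{corner} rather than a quotient: the trivial subgroup embedding makes $R \cong R\cdot 1$ a subring of $RG$, and more usefully $R$ is the image of $RG$ under a ring retraction. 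The quickest argument is therefore: $R \cong RG/\Delta(RG)$ via $\varepsilon$, and simultaneously $R$ embeds in $RG$ as a unital subring with $\varepsilon$ splitting the inclusion; so $R$ is (isomorphic to) a corner-type piece on which one can test the GSWNC condition element by element.

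The concrete step I would actually carry out is the element-wise criterion of Proposition~\ref{2.11}. Take $a \in R$ with $a \notin U(R)$. Then $a \cdot 1 \in RG$ is non-invertible: if $a$ had an inverse $b = \sum_{g} b_g g$ in $RG$, applying $\varepsilon$ gives $a\,\varepsilon(b) = \varepsilon(b)\,a = 1$ in $R$, contradicting $a \notin U(R)$. Since $RG$ is GSWNC, Proposition~\ref{2.11} yields $a \pm a^2 \in \mathrm{Nil}(RG)$, i.e.\ $(a \pm a^2)^m = 0$ in $RG$ for some $m$. But $a \pm a^2 \in R \cdot 1$, and $R \cdot 1$ is a subring of $RG$ in which the power is computed identically, so $(a \pm a^2)^m = 0$ already in $R$. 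Hence $a \pm a^2 \in \mathrm{Nil}(R)$, and Proposition~\ref{2.11} again (this time in the direction criterion $\Rightarrow$ GSWNC) gives that $R$ is GSWNC.

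The only point requiring a moment's care is that ``$a\notin U(R)$ implies $a\cdot 1 \notin U(RG)$'', which I handled above via the augmentation map; this is exactly the role $\varepsilon$ plays and it is the sole spot where the group-ring structure (as opposed to an arbitrary overring) enters. Everything else is the standard corner/subring argument behind results like Lemma~\ref{2.14}. I expect no genuine obstacle here: the lemma is a soft consequence of Proposition~\ref{2.11} together with the existence of the augmentation retraction, and the proof is two or three lines once that criterion is in place.
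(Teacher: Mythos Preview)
Your proof is correct. The paper's proof is precisely the one-liner you first considered and then set aside: it invokes $RG/\Delta(RG)\cong R$ and cites Proposition~\ref{2.13}. You rightly observe that Proposition~\ref{2.13}(i) is stated only for nil ideals and that $\Delta(RG)$ need not be nil; the paper's citation is therefore formally loose, but harmless, because the \emph{forward} direction of~\ref{2.13}(i) (GSWNC descends to any quotient) uses only that units map to units under a surjection, not that the kernel is nil. Your element-wise argument via Proposition~\ref{2.11}, together with the observation that $a\notin U(R)$ forces $a\cdot 1\notin U(RG)$ via the augmentation, is exactly that forward direction unpacked in this special case. So the two approaches coincide in substance; yours is simply more careful about the hypotheses of the lemma being cited.
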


\begin{proof}
We know that $RG/\Delta(RG) \cong R$. Therefore, by Proposition \ref{2.13}, it follows that $R$ must be a GSWNC ring, as stated.
\end{proof}

\begin{lemma}\label{3.2}
Let $R$ be a GSWNC ring with $p \in Nil(R)$ and let $G$ be a locally finite $p$-group, where $p$ is a prime. Then, the group ring $RG$ is GSWNC.
\end{lemma}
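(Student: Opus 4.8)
The plan is to reduce the statement to a known result about nil ideals via the augmentation ideal, exactly as in the companion GSNC paper. The key observation is that when $G$ is a locally finite $p$-group and $p \in \mathrm{Nil}(R)$, the augmentation ideal $\Delta(RG)$ is a nil ideal of $RG$; once this is established, Proposition~\ref{2.13}(i) together with the isomorphism $RG/\Delta(RG) \cong R$ and the hypothesis that $R$ is GSWNC immediately gives that $RG$ is GSWNC.

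First I would recall that $\Delta(RG)$ is generated as a left (and right) $R$-module by the elements $g - 1$ for $g \in G$, so it suffices to show every element of $\Delta(RG)$ is nilpotent, and for this one reduces to finitely supported elements lying in $RH$ for a finitely generated — hence finite — subgroup $H \le G$. Thus the problem reduces to the case where $G$ is a finite $p$-group. For a finite $p$-group $G$ of order $p^k$, a classical fact (going back to the structure of modular group algebras, and used explicitly in \cite{5}) is that the augmentation ideal $\Delta(RG)$ is nilpotent whenever $p$ is nilpotent in $R$: indeed $\Delta(\mathbb{F}_p G)$ is nilpotent for $G$ a finite $p$-group, and lifting through the nilpotent ideal generated by $p$ one gets $\Delta(RG)$ nilpotent as well. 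More precisely, writing $p^m = 0$ in $R$, one shows $\Delta(RG)^N \subseteq pR\cdot RG + (\text{lower terms})$ for suitable $N$ and iterates; this is a routine computation that I would not grind through, citing the analogous lemma from \cite{5} instead.

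With $\Delta(RG)$ shown to be nil, the proof concludes in one line: since $R \cong RG/\Delta(RG)$ is GSWNC and $\Delta(RG)$ is a nil ideal, Proposition~\ref{2.13}(i) yields that $RG$ is GSWNC.

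The main obstacle is the reduction of the nilpotence of $\Delta(RG)$ from the locally finite case to the finite case, and then establishing nilpotence of $\Delta(RG)$ over a finite $p$-group with $p$ merely nilpotent (rather than zero) in $R$. The locally finite reduction is essentially bookkeeping — any single element of $\Delta(RG)$ involves only finitely many group elements, which generate a finite subgroup — but one must be slightly careful that a power of a single element staying inside a fixed finite-subgroup group ring is enough, which it is. The genuinely substantive point is the modular group algebra fact, and here I would lean on the cited literature (the corresponding statement in \cite{5}, or standard references on group rings) rather than reprove it.
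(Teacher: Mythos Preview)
Your proposal is correct and follows essentially the same route as the paper: show that $\Delta(RG)$ is a nil ideal, use $RG/\Delta(RG)\cong R$, and invoke Proposition~\ref{2.13}(i). The only difference is cosmetic---the paper dispatches the nilness of $\Delta(RG)$ in one line by citing Connell \cite[Proposition~16]{25}, whereas you sketch the locally-finite-to-finite reduction and the modular group algebra argument yourself; both lead to the same conclusion.
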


\begin{proof}
By \cite[Proposition 16]{25}, we know that $\Delta(RG)$ is a nil-ideal. Thus, since $RG/\Delta(RG) \cong R$, Proposition \ref{2.13} allows us to infer that $RG$ is a GSWNC ring.
\end{proof}

\begin{lemma}\label{3.5}
Let $f: R \to S$ be a non-zero epimorphism of rings with ${\rm Ker}(f) \cap {\rm Id}(R) = \{0\}$. Then, $R$ is a GSWNC ring if, and only if, $S$ is a GSWNC ring and ${\rm Ker}(f)$ is a nil-ideal of $S$.
\end{lemma}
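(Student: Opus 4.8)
The plan is to write $I=\operatorname{Ker}(f)$ and exploit the isomorphism $R/I\cong S$ together with Proposition~\ref{2.13}(i), which asserts that for a \emph{nil} ideal $I$ the ring $R$ is GSWNC precisely when $R/I$ is. With this in hand the \emph{converse} implication is immediate: if $S\cong R/I$ is GSWNC and $I$ is nil, Proposition~\ref{2.13}(i) gives that $R$ is GSWNC. So the whole content lies in the forward direction, where, assuming $R$ is GSWNC, I must show that $I$ is a nil ideal; once that is established, the \emph{same} Proposition~\ref{2.13}(i) yields that $S\cong R/I$ is GSWNC, and we are done. (Here ``nil-ideal of $S$'' in the statement is of course to be read as ``nil-ideal of $R$'' under the identification $S\cong R/I$.)

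To prove that $I$ is nil, I would fix $a\in I$ and argue that $a$ is nilpotent. Since $f$ is a \emph{non-zero} epimorphism, $S\cong R/I\neq 0$, hence $I\neq R$, and therefore $a\notin U(R)$ (a unit lying in $I$ would force $1\in I$). Now $R$ is GSWNC, so Proposition~\ref{2.11} applies to $a$: there is $m\in\mathbb{N}$ with $(a+a^{2})^{m}=0$ or $(a-a^{2})^{m}=0$. I treat the case $(a-a^{2})^{m}=0$; the other is entirely symmetric. Since every expression involved is a polynomial in the single element $a$, we may factor to obtain $a^{m}(1-a)^{m}=0$, and expanding $(1-a)^{m}=1-a\,d$ with $d=\binom{m}{1}-\binom{m}{2}a+\binom{m}{3}a^{2}-\cdots$ a polynomial in $a$ gives
\[
a^{m}=a^{m+1}d,\qquad ad=da .
\]
Iterating the first identity yields $a^{m}=a^{m+j}d^{\,j}$ for all $j\geq 0$; in particular, putting $e:=a^{m}d^{\,m}$ and using that $a$ and $d$ commute, one checks $e^{2}=a^{2m}d^{\,2m}=(a^{2m}d^{\,m})d^{\,m}=a^{m}d^{\,m}=e$ and $a^{m}e=a^{2m}d^{\,m}=a^{m}$, so $a^{m}(1-e)=0$. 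The crucial observation is that $e=a^{m}d^{\,m}\in a^{m}R\subseteq I$, since $I$ is an ideal and $a\in I$; being an idempotent of $I$, the hypothesis $\operatorname{Ker}(f)\cap\operatorname{Id}(R)=\{0\}$ forces $e=0$, whence $a^{m}=a^{m}e=0$. Thus every element of $I$ is nilpotent, i.e. $I$ is a nil ideal, and Proposition~\ref{2.13}(i) finishes both directions.

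I expect the main obstacle to be exactly that middle step: turning the bare fact that $a\pm a^{2}$ is nilpotent into an honest \emph{idempotent} that provably lies in $I$. The point is that one cannot just cite Corollary~\ref{2.12} and use an arbitrary strong $\pi$-regularity witness $r$ with $a^{m}=a^{m+1}r$, because $r$ need not commute with $a$ and then $a^{m}r^{m}$ need not be idempotent; instead one must produce a \emph{commuting} witness, which the single-variable polynomial $d$ in $a$ supplies for free. It is also worth noting that both standing hypotheses are genuinely used here and only here: ``$f$ non-zero'' guarantees that $a\in I$ is a non-unit so that Proposition~\ref{2.11} is applicable, and ``$\operatorname{Ker}(f)\cap\operatorname{Id}(R)=\{0\}$'' is what kills the idempotent $e$ and forces $a^{m}=0$; dropping the latter, the statement fails (e.g. $\mathbb{Z}_{6}\to\mathbb{Z}_{3}$, where the kernel is generated by the nontrivial idempotent $3$ and is not nil).
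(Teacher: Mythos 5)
Your proof is correct, but it follows a genuinely different route from the paper's. The paper argues directly from the definition: for $a\in\operatorname{Ker}(f)$ (a non-unit since $S\neq 0$), it takes a strongly weakly nil-clean decomposition $a=q\pm e$ with $qe=eq$, applies $f$ to get $f(e)=\mp f(q)\in\operatorname{Id}(S)\cap\operatorname{Nil}(S)=\{0\}$, so $e\in\operatorname{Ker}(f)\cap\operatorname{Id}(R)=\{0\}$, whence $a=q$ is nilpotent; the converse is, as in your write-up, just Proposition~\ref{2.13}. You instead start from the weaker datum supplied by Proposition~\ref{2.11}, that $(a\pm a^{2})^{m}=0$, and manufacture a commuting idempotent $e=a^{m}d^{m}$ with $d$ a polynomial in $a$, observing that this $e$ lies in $\operatorname{Ker}(f)$ simply because it is a multiple of $a^{m}$, and then kill it with the hypothesis to get $a^{m}=0$. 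Your computation checks out ($a^{m}=a^{m+j}d^{j}$, $e^{2}=e$, $a^{m}e=a^{m}$, and the $(a+a^{2})^{m}=0$ case is symmetric), and your caution about needing a commuting witness rather than an arbitrary strong $\pi$-regularity witness is well placed. What the paper's proof buys is brevity — the idempotent is handed to you by the GSWNC decomposition and the kernel hypothesis is applied to it after pushing through $f$; what your proof buys is the observation that the characterization ``$a\pm a^{2}$ nilpotent for non-units'' alone suffices, with the kernel hypothesis applied to a spectral idempotent that visibly lies in $a^{m}R\subseteq\operatorname{Ker}(f)$ without ever applying $f$ to a decomposition. Your side remarks (that ``nil-ideal of $S$'' should read ``nil-ideal of $R$'', and the $\mathbb{Z}_{6}\to\mathbb{Z}_{3}$ example showing the idempotent hypothesis is not removable) are also accurate.
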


\begin{proof}
Assume that $R$ is GSWNC. According to the relation $S \cong R/Ker(f)$ and Proposition \ref{2.13}, it is enough to prove that $Ker(f)$ is a nil-ideal. Now, let $a\in Ker(f)$. Thus, $a \notin U(R)$, so there exist $e\in Id(R)$ and $q\in Nil(R)$ with $a = q \pm e$ and $eq = qe$. Then, we have, $0 = f(a) = f(q) \pm f(e)$, and hence
$f(e) \in Id(S) \cap Nil(S) = \{0\}$. This shows that $e\in Id(R) \cap Ker(f) = \{0\}$, so $a = q \in Nil(R)$, therefore, $Ker(f)$ is a nil-ideal.\\
The converse is obvious by again Proposition \ref{2.13}.
\end{proof}

A valuable consequence is the following one.

\begin{corollary}\label{3.6}
Let $R$ be a ring and $G$ be a group, where $\Delta(RG)\cap Id(RG)=\{0\}$. Then, $RG$ is a GSWNC ring if, and only if, $R$ is a GSWNC ring and $\Delta(RG)$ is a nil-ideal of $RG$.
\end{corollary}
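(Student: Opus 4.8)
The plan is to observe that Corollary~\ref{3.6} is nothing more than the specialization of Lemma~\ref{3.5} to the augmentation homomorphism $\varepsilon : RG \to R$. First I would take $S := R$ and regard $\varepsilon$ as the epimorphism ``$f$'' of Lemma~\ref{3.5}: it is surjective by construction and it is non-zero whenever $R \neq 0$ (the case $R = 0$ forces $RG = 0$, so that both sides of the stated equivalence hold vacuously and nothing is to prove). By the very definition of the augmentation ideal one has $\mathrm{Ker}(\varepsilon) = \Delta(RG)$, so the standing hypothesis $\Delta(RG) \cap \mathrm{Id}(RG) = \{0\}$ is literally the hypothesis $\mathrm{Ker}(f) \cap \mathrm{Id}(RG) = \{0\}$ demanded by Lemma~\ref{3.5}.

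With this identification in place, I would simply invoke Lemma~\ref{3.5}: applied to $f = \varepsilon$ it says that $RG$ is a GSWNC ring if, and only if, $RG/\mathrm{Ker}(\varepsilon) \cong R$ is a GSWNC ring and $\mathrm{Ker}(\varepsilon) = \Delta(RG)$ is a nil-ideal of $RG$. This is exactly the claimed biconditional, so the proof terminates immediately.

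There is essentially no obstacle here: the work has already been done in Lemma~\ref{3.5}, and the only thing to check is the bookkeeping that the augmentation setup meets its hypotheses. If one wishes to be slightly more self-contained, one can recall at the outset that $RG/\Delta(RG) \cong R$ (as already used in Lemma~\ref{3.1}) so that the reader need not re-derive it. The statement thus follows by a direct citation of Lemma~\ref{3.5}.
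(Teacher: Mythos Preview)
Your proposal is correct and matches the paper's own proof essentially verbatim: the paper simply says the result is clear by applying Lemma~\ref{3.5} to the augmentation epimorphism $\varepsilon : RG \to R$ with $\mathrm{Ker}(\varepsilon) = \Delta(RG)$. Your additional remark handling the degenerate case $R = 0$ is a harmless bit of extra care that the paper omits.
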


\begin{proof}
This is clear by an epimorphism $\varepsilon: RG \to R$ with $Ker(\varepsilon)= \Delta(RG)$ and Lemma \ref{3.5}.
\end{proof}

Our next three technicalities are as follows.

\begin{lemma}\label{3.7} \cite[Lemma $2$]{27}.
Let $p$ be a prime number with $p\in J(R)$. If $G$ is a locally finite $p$-group, then $\Delta(RG) \subseteq J(RG)$.
\end{lemma}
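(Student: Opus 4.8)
The plan is to reduce to the case of a \emph{finite} $p$-group and then combine the classical nilpotence of the augmentation ideal of $\mathbb{F}_p[\text{finite }p\text{-group}]$ with control of the Jacobson radical under change of the coefficient ring. Recall first that a two-sided ideal $I$ of a ring $S$ satisfies $I \subseteq J(S)$ if and only if $1+x \in U(S)$ for every $x \in I$; equivalently, $x \in J(S)$ iff $1-rx \in U(S)$ for all $r \in S$. So take $x \in \Delta(RG)$ and $r \in RG$: both have finite support, hence there is a finitely generated subgroup $H \leq G$ containing $\mathrm{supp}(x)\cup\mathrm{supp}(r)$, and since $G$ is locally finite, $H$ is finite, with $x \in \Delta(RH)$. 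Thus it suffices to prove $\Delta(RH)\subseteq J(RH)$ for every finite $p$-subgroup $H$, because then $1-rx \in U(RH)\subseteq U(RG)$, and as $r$ was arbitrary this gives $x \in J(RG)$.

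So fix a finite $p$-group $H$ with $n:=|H|$, and prove $\Delta(RH)\subseteq J(RH)$ in two steps; the first is to show $J(R)\cdot RH \subseteq J(RH)$. View $RH$ as the free left $R$-module on $H$ and let $\lambda\colon RH \to \mathrm{End}_R(RH)\cong M_n(R)$ be the left regular representation. If $z \in J(R)\cdot RH$, then all coefficients of $z$ lie in $J(R)$, so the matrix $\lambda(z)$ lies in $M_n(J(R))=J(M_n(R))$, whence $\lambda(1+z)=I+\lambda(z)\in U(M_n(R))$. Its inverse $\mu$ commutes with every right multiplication $\rho(w)$ (because $\lambda(1+z)$ does), so $\mu$ is a homomorphism of right $RH$-modules, and therefore $\mu=\lambda(v)$ for a unique $v \in RH$ (using $\mathrm{End}_{RH}(RH_{RH})\cong RH$ by left multiplication). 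Then $\lambda(v(1+z))=\lambda(v)\lambda(1+z)=\mathrm{id}$ and likewise $\lambda((1+z)v)=\mathrm{id}$, and injectivity of $\lambda$ gives $v(1+z)=(1+z)v=1$. Hence $1+z\in U(RH)$ for every $z$ in the two-sided ideal $J(R)\cdot RH$, so $J(R)\cdot RH\subseteq J(RH)$.

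For the second step, reduce modulo $J(R)$. By Step 1, $\bar R H \cong RH/(J(R)\cdot RH)$ with $J(\bar R H)=J(RH)/(J(R)\cdot RH)$, where $\bar R:=R/J(R)$. Since $p\in J(R)$ we have $p\bar R=0$, so $\bar R$ is an $\mathbb{F}_p$-algebra, $\bar R H\cong \bar R\otimes_{\mathbb{F}_p}\mathbb{F}_p H$, and hence $\Delta(\bar R H)=\bar R\otimes_{\mathbb{F}_p}\Delta(\mathbb{F}_p H)$. As $H$ is a finite $p$-group, $\mathbb{F}_p H$ is local with Jacobson radical $\Delta(\mathbb{F}_p H)$, which is therefore nilpotent, say $\Delta(\mathbb{F}_p H)^N=0$; consequently $\Delta(\bar R H)^N=0$, so $\Delta(\bar R H)$ is nil and in particular $\Delta(\bar R H)\subseteq J(\bar R H)$. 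Pulling this back through the surjection $RH\twoheadrightarrow\bar R H$ and invoking Step 1 yields $\Delta(RH)\subseteq J(RH)$, which completes the proof.

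The delicate point is the end of the first step: upgrading ``$1+z$ acts invertibly on the free $R$-module $RH$'' to ``$1+z$ is a unit of the ring $RH$''. This rests on the fact that an $R$-linear automorphism of $RH$ commuting with all right multiplications is itself left multiplication by a ring element, together with $J(M_n(R))=M_n(J(R))$. Once this is in hand, the $p$-group hypothesis enters only through the classical nilpotence of $\Delta(\mathbb{F}_p H)$, and local finiteness only through the reduction to finite subgroups.
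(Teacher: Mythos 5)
The paper offers no argument for this lemma at all --- it simply quotes Zhou \cite[Lemma 2]{27} --- so your proof is necessarily an independent one. Its architecture is sound: the reduction to a finite subgroup $H$ via local finiteness and the criterion ``$x\in J(RG)$ iff $1-rx\in U(RG)$ for all $r$'' is correct (any finitely generated subgroup containing the supports is a finite $p$-group, and units of $RH$ are units of $RG$), and Step 2 is fine: once $J(R)\cdot RH\subseteq J(RH)$ is known, $J(\bar{R}H)=J(RH)/(J(R)\cdot RH)$ with $\bar{R}=R/J(R)$ an $\mathbb{F}_p$-algebra, and the classical nilpotence of $\Delta(\mathbb{F}_p H)$ for a finite $p$-group $H$ (essentially Connell \cite{25}) pulls back exactly as you say.

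The one genuine flaw is in Step 1, precisely at the point you flag as delicate, and it matters because $R$ is not assumed commutative in this paper. If $RH$ is viewed as a free \emph{left} $R$-module, then left multiplication $\lambda(z)$ is \emph{not} $R$-linear in general: $\lambda(z)(rm)=zrm\neq rzm$ unless the coefficients of $z$ commute with $r$. If instead you view $RH$ as a free \emph{right} $R$-module (legitimate, since group elements commute with coefficients), then $\lambda(z)$ is right $R$-linear and its matrix with respect to the basis $H$ has entries among the coefficients of $z$, so the step via $M_n(J(R))=J(M_n(R))$ goes through --- but then right multiplication $\rho(w)$ by a \emph{general} $w\in RH$ is not right $R$-linear, so you cannot say ``the inverse $\mu$ commutes with every right multiplication $\rho(w)$''. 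The repair is routine: each $\rho(h)$ with $h\in H$ \emph{is} right $R$-linear, $\lambda(1+z)$ commutes with every $\rho(h)$, hence so does $\mu$; then right $R$-linearity plus $\mu\rho(h)=\rho(h)\mu$ for all $h\in H$ gives $\mu(m)=\mu(1)m$ for every $m\in RH$, i.e. $\mu=\lambda(\mu(1))$, and injectivity of $\lambda$ yields $(1+z)\mu(1)=\mu(1)(1+z)=1$ as you intended. Alternatively, you can bypass the regular representation entirely: for a simple left $RH$-module $M$ and $0\neq m\in M$, $M$ is generated as an $R$-module by the finite set $\{hm: h\in H\}$ and $J(R)M$ is an $RH$-submodule (group elements commute with $J(R)$), so Nakayama forces $J(R)M=0$; hence $J(R)\cdot RH$ annihilates all simple modules and lies in $J(RH)$. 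With either repair your proof is complete and self-contained, which is more than the paper provides.
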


\begin{lemma}\label{3.8}
Let $R$ be a ring and let $G$ be a locally finite $p$-group, where $p$ is a prime number and $p\in J(R)$. Then, $RG$ is a GSWNC ring if, and only if, $R$ is a GSWNC ring and $\Delta(RG)$ is a nil-ideal of $RG$.
\end{lemma}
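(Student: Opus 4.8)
The plan is to derive the equivalence purely from the apparatus already in place, namely Proposition~\ref{2.13}, Lemma~\ref{3.1}, and Lemma~\ref{3.7}, together with the standard ring isomorphism $RG/\Delta(RG) \cong R$ induced by the augmentation map $\varepsilon$. This lemma is the exact analogue of Lemma~\ref{3.5} and Corollary~\ref{3.6}, with the hypothesis $\Delta(RG)\cap {\rm Id}(RG)=\{0\}$ there replaced by the stronger and more concrete hypothesis that $p\in J(R)$ with $G$ a locally finite $p$-group; so the proof should run parallel.

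For the backward implication, I would assume $R$ is GSWNC and $\Delta(RG)$ is a nil-ideal of $RG$. Since $RG/\Delta(RG)\cong R$ is then GSWNC and $\Delta(RG)$ is nil, Proposition~\ref{2.13}(i) applied to the nil-ideal $I=\Delta(RG)$ immediately gives that $RG$ is GSWNC, with nothing further to check.

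For the forward implication, assume $RG$ is GSWNC. First, Lemma~\ref{3.1} yields that $R$ is GSWNC, so it remains only to show that $\Delta(RG)$ is nil. Here I would invoke Proposition~\ref{2.13}(ii) (equivalently Lemma~\ref{2.10}) to conclude that $J(RG)$ is nil, and then use the hypotheses $p\in J(R)$ and $G$ locally finite $p$-group to apply Lemma~\ref{3.7}, obtaining $\Delta(RG)\subseteq J(RG)$. Since $\Delta(RG)$ is already an ideal and every one of its elements is nilpotent (being an element of the nil set $J(RG)$), it is a nil-ideal, which finishes the argument.

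I do not expect any real obstacle: the proof is essentially bookkeeping, and the only point at which the specific arithmetic hypotheses on $p$ and $G$ are used is the appeal to Lemma~\ref{3.7}, which is quoted from \cite{27}. The only subtlety worth spelling out is the (trivial) observation that an ideal contained in a nil set is a nil-ideal, so that "$\Delta(RG)\subseteq J(RG)$ with $J(RG)$ nil" does indeed deliver what Proposition~\ref{2.13} requires.
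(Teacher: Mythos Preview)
Your argument is correct. The paper actually states Lemma~\ref{3.8} without proof, presumably because the placement of Lemma~\ref{3.7} immediately before it signals that the reader is meant to assemble precisely the argument you give: for the forward direction use Lemma~\ref{3.1}, Lemma~\ref{2.10}, and Lemma~\ref{3.7} to get $\Delta(RG)\subseteq J(RG)$ nil; for the backward direction use $RG/\Delta(RG)\cong R$ together with Proposition~\ref{2.13}(i). Your proposal matches this intended route exactly.
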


\begin{lemma}
Let $R$ be a GSWNC ring and let $G$ be a group such that $\Delta(RG) \subseteq J(RG)$. Then, $RG/J(RG)$ is a GSWNC ring.
\end{lemma}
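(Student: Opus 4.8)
The plan is to realize $RG/J(RG)$ as a homomorphic image of $R$ and then to apply the elementwise characterization of GSWNC rings given in Proposition~\ref{2.11}.

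First I would record the standard fact that the augmentation map induces an isomorphism $RG/\Delta(RG)\cong R$. Since, by hypothesis, $\Delta(RG)\subseteq J(RG)$, the canonical projection $RG\to RG/J(RG)$ kills $\Delta(RG)$ and therefore factors through $RG/\Delta(RG)$; composing with the above isomorphism we obtain a surjective ring homomorphism
$$\pi\colon R\;\cong\;RG/\Delta(RG)\;\twoheadrightarrow\;RG/J(RG).$$

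Next I would verify the GSWNC condition on the target via Proposition~\ref{2.11}: it suffices to show that every non-invertible $\bar a\in RG/J(RG)$ satisfies $\bar a\pm\bar a^{2}\in{\rm Nil}(RG/J(RG))$. Choose any preimage $a\in R$ of $\bar a$ under $\pi$. Because a (unital) ring homomorphism sends units to units, from $\bar a=\pi(a)\notin U(RG/J(RG))$ we conclude $a\notin U(R)$. As $R$ is GSWNC, Proposition~\ref{2.11} yields $a\pm a^{2}\in{\rm Nil}(R)$, and applying $\pi$ — which sends nilpotents to nilpotents — gives $\bar a\pm\bar a^{2}=\pi(a\pm a^{2})\in{\rm Nil}(RG/J(RG))$. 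A second invocation of Proposition~\ref{2.11} then shows $RG/J(RG)$ is GSWNC, completing the proof.

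There is no genuinely hard step here; the only point worth flagging is that $J(RG)$ need not be nil, so one cannot simply quote Proposition~\ref{2.13}(i), whose converse implication relies on the ideal being nil. Instead one uses only the easy ``forward'' direction encoded in Proposition~\ref{2.11}, which is valid for the quotient by \emph{any} ideal (indeed, this argument shows more generally that a homomorphic image of a GSWNC ring is again GSWNC). The remaining ingredients — that augmentation gives $RG/\Delta(RG)\cong R$, and that ring homomorphisms preserve invertibility and nilpotency — are entirely routine.
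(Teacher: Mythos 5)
Your proof is correct and follows essentially the same route as the paper: both realize $RG/J(RG)$ as a homomorphic image of $R$ (the paper via $RG=\Delta(RG)+R\subseteq J(RG)+R$ and the isomorphism $R/(J(RG)\cap R)\cong RG/J(RG)$, you by factoring the canonical projection through $RG/\Delta(RG)\cong R$) and then pass the GSWNC property to that quotient. Your remark about bypassing the nil-ideal hypothesis of Proposition~\ref{2.13} through the element-wise criterion of Proposition~\ref{2.11} is a sound refinement, though in this situation $J(RG)\cap R\subseteq J(R)$ is in fact nil by Lemma~\ref{2.10}, so the paper's citation is also legitimate.
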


\begin{proof}
We know that $RG = \Delta(RG) + R$, because $\Delta(RG) \subseteq J(RG)$, which implies that $RG = J(RG) + R$. Therefore,
$$R/(J(RG) \cap R) \cong (J(RG) + R)/J(RG) = RG/J(RG),$$
and since $R$ is GSWNC, the quotient $R/(J(RG) \cap R)$ is too GSWNC by Proposition \ref{2.13}. Then, we conclude that the factor $RG/J(RG)$ is a GSWNC ring, as required.
\end{proof}

An element $a \in R$ is called an {\it involution} if $a^2 = 1$. An element $w \in R$ is an unipotent if $1-w$ is a nilpotent. A ring $R$ is said to be an $IU$-ring if, for any $a$ in $R$, either $a$ or $-a$ is the sum of an involution and an unipotent.

\medskip

Our main statement here is the following.

\begin{theorem}\label{3.3}
Let $R$ be a ring such that $2 \notin U(R)$, and let $G$ be a group such that $RG$ is a GSWNC ring. Then, $G$ is a $2$-group, where $2 \in Nil(R)$.
\end{theorem}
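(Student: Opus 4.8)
The plan is to exploit the hypothesis $2\notin U(R)$ together with the structural dichotomy already established in Lemma~\ref{2.55}: since $RG$ is GSWNC and $2\notin U(RG)$ (because $2\notin U(R)$ and the augmentation map would carry a unit to a unit), we get that either $2\in{\rm Nil}(RG)$ or $6\in{\rm Nil}(RG)$. In either case, reducing modulo the augmentation ideal via $\varepsilon:RG\to R$ shows $2\in{\rm Nil}(R)$ or $6\in{\rm Nil}(R)$; I will first rule out the possibility that $6\in{\rm Nil}(R)$ but $2\notin{\rm Nil}(R)$, which would force (by the CRT-type splitting used in Lemma~\ref{2.56}) a factor of odd characteristic on which $RG$ restricted is strongly weakly nil-clean, and then use the known classification of strongly weakly nil-clean group rings (or the $IU$/$WUU$ machinery recalled just before the theorem) to contradict $2\notin U$. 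This pins down $2\in{\rm Nil}(R)$.

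Next, with $2\in{\rm Nil}(R)$ in hand, Lemma~\ref{2.29} tells us $RG$ is actually GSNC. Now I would invoke (or re-derive in this setting) the group-ring obstruction for GSNC/strongly nil-clean rings: if $g\in G$ has order divisible by an odd prime $q$, or has infinite order, then the element $g\in RG$ (or a suitable element built from $g$, such as $g-g^2$ or $1+g+\cdots+g^{q-1}$) fails to satisfy the criterion of Proposition~\ref{2.11}, i.e. $(g)\pm(g)^2\notin{\rm Nil}(RG)$. Concretely, passing to the quotient $RG/J(R)G$ where $2=0$, the subring generated by $g$ looks like $\mathbb{F}_2\langle g\rangle$, a commutative semisimple-plus-nilpotent algebra whose idempotent/nilpotent structure is controlled by the factorization of $x^{{\rm ord}(g)}-1$ over $\mathbb{F}_2$; the presence of a nontrivial odd-order part produces a unit that is neither unipotent nor its negative, violating GSNC. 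Hence every element of $G$ has $2$-power order, i.e. $G$ is a $2$-group.

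For the order of $G$ being a $2$-group when $G$ is infinite, the same local argument suffices: GSWNC rings are strongly $\pi$-regular by Corollary~\ref{2.12}, so $RG$ is strongly $\pi$-regular, and a group ring $RG$ with $R\neq 0$ is strongly $\pi$-regular only if $G$ is locally finite and a $p$-group for the relevant prime — combined with the previous paragraph this forces $G$ to be a locally finite $2$-group; in particular every element has $2$-power order, which is exactly the assertion "$G$ is a $2$-group." I would phrase the final write-up so that the conclusion "$G$ is a $2$-group, where $2\in{\rm Nil}(R)$" is read as: (a) $2\in{\rm Nil}(R)$, proved in the first step, and (b) every element of $G$ has order a power of $2$, proved in the second step.

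The main obstacle I anticipate is the first step — cleanly excluding the mixed case $6\in{\rm Nil}(R)\setminus{\rm Nil}(R)$-with-$2$. One must make sure that when $RG$ splits as a product corresponding to the $2$-primary and $3$-primary parts, the GSWNC property descends to each factor (Proposition~\ref{2.5} / Proposition~\ref{2.6}), identify that the $3$-primary factor is $(R/2^n)G$-type with $2$ a unit there, and then derive a contradiction with $2\notin U(R)$ in the original ring. A secondary technical point is justifying the "local $\mathbb{F}_2\langle g\rangle$" computation rigorously: one should choose a specific test element, e.g. $x=g$ with $g$ of order $2^a m$, $m>1$ odd, show $x-x^2$ (or $x+x^2$) has a nonzero image in a suitable commutative quotient that is not nilpotent, and then appeal to Proposition~\ref{2.11}. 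I expect the rest to be bookkeeping with the already-proven lemmas.
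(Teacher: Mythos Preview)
Your overall architecture is close to the paper's, but the organization you choose---first prove $2\in{\rm Nil}(R)$, then reduce to GSNC---introduces a genuine gap in Step~1. The paper does not try to pin down $2\in{\rm Nil}(R)$ in advance. Instead it applies Lemma~\ref{2.56} directly to $RG$ (using $2\notin U(RG)$) to obtain the clean dichotomy ``$RG$ is GSNC'' or ``$RG$ is strongly weakly nil-clean,'' and then finishes each branch by citing an external classification: \cite[Theorem~3.7]{5} for the GSNC branch, and \cite[Theorem~1.15]{26} together with \cite[Lemma~4.2]{1} for the SWNC branch. Both conclusions ($G$ a $2$-group and $2\in{\rm Nil}(R)$) fall out simultaneously in each branch.

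Your Step~1 breaks down at the point where you say you will ``derive a contradiction with $2\notin U(R)$ in the original ring.'' When $6\in{\rm Nil}(R)$ and you split $R\cong R/2^nR\times R/3^nR$, the factor in which $2$ becomes a unit is $R/3^nR$, not $R/2^nR$ (you have the labels reversed), and in any case $2\in U(R/3^nR)$ does \emph{not} contradict $2\notin U(R)$: a non-unit in a product can be a unit in one coordinate. So no contradiction arises here, and you cannot conclude $2\in{\rm Nil}(R)$ at this stage. What the CRT splitting actually buys you (via Proposition~\ref{2.5}) is that both $R_1G$ and $R_2G$ are strongly weakly nil-clean; applying \cite[Theorem~1.15]{26} to each and playing the two conclusions against one another is what eventually forces the result---but that is exactly the paper's SWNC branch, not a preliminary reduction to $2\in{\rm Nil}(R)$.

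Your Step~2 is fine in spirit but more laborious than necessary: once the GSNC case is isolated, the paper simply invokes \cite[Theorem~3.7]{5} rather than re-deriving the obstruction by hand over $\mathbb{F}_2\langle g\rangle$. Your $\pi$-regularity remark about local finiteness is not needed for the statement as phrased (``$2$-group'' here means every element has $2$-power order), and the paper does not use it.
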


\begin{proof}
Suppose $RG$ is a GSWNC ring. Invoking Lemma \ref{2.56}, we have that either $RG$ is a GSNC ring or $RG$ is a strongly weakly nil-clean ring. If $RG$ is a GSNC ring, one concludes from \cite[Theorem 3.7]{5} that $G$ is a $p$-group, where $p \in Nil(R)$. If $p$ is odd, this obviously contradicts $2 \in U(R)$, so it must be that $p = 2$.

If $RG$ is a strongly weakly nil-clean ring, then \cite[Theorem 1.15]{26} riches us that either $R$ is a strongly nil-clean ring and $G$ is a $2$-group, or $R$ is an $IU$ ring and $G$ is a $3$-group. If $R$ is a strongly nil-clean ring and $G$ is a $2$-group, there is nothing to prove, because from \cite[Proposition 3.14]{2} we have $2 \in Nil(R)$. However, if $R$ is an $IU$ ring, \cite [Lemma 4.2]{1} tells us that $2 \in U(R)$, which is a contradiction.
\end{proof}

The next construction is helpful to understand the complicated structure of GSWNC group rings.

\begin{example}\label{3.4}
Let $G$ be a non-trivial locally finite group and $2 \notin U(R)$. Then, $\mathbb{Z}_mG$ is a GSWNC ring if, and only if, $m=2^k$ for some positive integer $k$ and $G$ is a $2$-group.
\begin{proof}
Assume that $\mathbb{Z}_mG$ is a GSWNC ring, then $\mathbb{Z}_mG$ is GSNC or strongly weakly nil-clean by Lemma \ref{2.56}. If $\mathbb{Z}_mG$ is GSNC, then $G$ is a $p$-group, where $p \in Nil(R)$. If $p$ is odd, this contradicts $2 \in U(R)$, so $p = 2$. Also, we have $m=2^k$, because otherwise if for example $m=6$, $R=\mathbb{Z}_6$ and, $G=C_{2}$, then clearly the ring $RG$ is not GSNC (since if $RG$ is GSNC, then $\mathbb{Z}_6$ is GSNC by \cite[Proposition 2.7]{5} and this is contradiction). Now if $\mathbb{Z}_mG$ is strongly weakly nil-clean then by \cite[Example 1.17]{26}, we have three cases:\\
(1) $m=2^k$ for some positive integer $k$ and $G$ is a $2$-group.\\
(2) $m=3^k$ for some positive integer $k$ and $G$ is a $3$-group.\\
(3) $m=2^k3^l$ for some positive integers $k$, $l$ and $G$ is trivial.\\
According to the assumptions, cases (2) and (3) will not happen. So only case (1)  happens and the result is obtained.\\
Conversely, let $m=2^k$ for some positive integer $k$ and $G$ is a $2$-group. Then $\mathbb{Z}_mG$ is a strongly weakly nil-clean ring by \cite[Example 1.17]{26} and hence it is GSWNC.  	
\end{proof}
\end{example}

We, hereafter, denote the center of group $G$ by $Z(G)$.

\begin{lemma}\label{3.9}
Let $R$ be a ring with $2 \notin U(R)$ and $G$ be a group. If $RG$ is a GSNC ring, then $Z(G)$ is a $2$-group.	
\end{lemma}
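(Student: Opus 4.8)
The plan is to prove directly that every element of $Z(G)$ has order a power of $2$; throughout one may harmlessly assume $R\neq 0$. The key preliminary observation is that for every $g\in Z(G)$ with $g\neq 1$ the element $1-g$ is nilpotent in $RG$. Indeed, $1-g$ lies in the augmentation ideal $\Delta(RG)=\ker\varepsilon$, a proper two-sided ideal of $RG$ (since $\varepsilon\colon RG\to R$ is surjective and $R\neq 0$), so $1-g\notin U(RG)$; as $RG$ is GSNC, $1-g$ is strongly nil-clean, say $1-g=e+q$ with $e^2=e$, $q\in{\rm Nil}(RG)$, $eq=qe$, and then $(1-g)-(1-g)^2=q(1-2e-q)$ is nilpotent because $q$ is nilpotent and commutes with $1-2e-q$. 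Since also $(1-g)-(1-g)^2=(1-g)g$ and $g$ is a central unit of $RG$, cancelling $g$ yields $1-g\in{\rm Nil}(RG)$.

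Next I would eliminate elements of infinite order. If some $g\in Z(G)$ had infinite order, then $\langle g\rangle\cong\mathbb{Z}$ is a central subgroup and the subring $R\langle g\rangle\subseteq RG$ is isomorphic to the Laurent polynomial ring $R[x,x^{-1}]$ with $g\leftrightarrow x$; nilpotence of $1-g$ passes to this subring, so $(1-x)^N=0$ in $R[x,x^{-1}]$ for some $N$, and comparing the coefficient of $x^0$ forces $1_R=0$, a contradiction. Hence $Z(G)$ is torsion.

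Now suppose, for contradiction, that some $g\in Z(G)$ has finite order $m$ divisible by an odd prime $p$, and set $h:=g^{m/p}$, a central element of order exactly $p$. The subring $R\langle h\rangle\subseteq RG$ is isomorphic to the group ring $RC_p\cong R[x]/(x^p-1)$ with $h\leftrightarrow\bar x$, and by the first paragraph $t:=1-\bar x$ is nilpotent in it. From the integer polynomial identity $\Phi_p(x)=x^{p-1}+\cdots+x+1=p+(x-1)g_0(x)$ (valid since $x-1$ divides $\Phi_p(x)-\Phi_p(1)$ in $\mathbb{Z}[x]$) one gets $x^p-1=p(x-1)+(x-1)^2g_0(x)$, and reducing modulo $x^p-1$ gives $pt=t^2g_0(\bar x)$; iterating, $p^k t=t^{k+1}g_0(\bar x)^k$, which is $0$ as soon as $k+1$ reaches the nilpotency index of $t$. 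Since $1,\bar x,\dots,\bar x^{p-1}$ is a free $R$-basis of $R[x]/(x^p-1)$, reading off the coefficient of $x^0$ in $p^k t=0$ gives $p^k\cdot 1_R=0$, i.e.\ $p\in{\rm Nil}(R)$; but then, choosing integers with $2a+p^kb=1$ and multiplying by $1_R$, one obtains $2\cdot(a\cdot 1_R)=1_R$, so $2\in U(R)$, contradicting the hypothesis. Therefore $m$ has no odd prime divisor, every element of $Z(G)$ has $2$-power order, and $Z(G)$ is a $2$-group.

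I expect the main obstacle to be the last step: once nilpotence of $1-g$ has been transported into the subring $RC_p$ (routine, since it is merely a subring), the real work is extracting from ``$1-\bar x$ is nilpotent in $R[x]/(x^p-1)$'' that the odd prime $p$ is nilpotent in $R$ — this is where the cyclotomic identity and the free-basis coefficient comparison are essential, and where one must be careful that $R$ need not be commutative. One could instead shortcut the whole argument by invoking the classification of GSNC group rings \cite[Theorem 3.7]{5}: it forces $G$ to be a $p$-group with $p\in{\rm Nil}(R)$, whence $2\notin U(R)$ gives $p=2$ and even $G$ itself — a fortiori $Z(G)$ — is a $2$-group.
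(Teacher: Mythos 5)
Your proof is correct, but it follows a genuinely different route from the paper's. The paper first forces characteristic $2$: it notes that a GSNC ring is GNC, cites \cite[Proposition 2.6]{29} together with $2\notin U(R)$ to get $2\in{\rm Nil}(RG)$, passes to $(R/J(R))G$ so as to assume $2=0$, and then, for central $g$, writes $1-g=e+q$ and shows $1-e=g(1+g^{-1}q)$ is simultaneously an idempotent and a unit, so $e=0$ and $1-g\in{\rm Nil}(RG)$; the $2$-power order of $g$ then drops out of the freshman's dream $(1-g)^{2^k}=1-g^{2^k}$ in characteristic $2$. You instead get nilpotence of $1-g$ without killing the idempotent or reducing modulo $J(R)$, via the observation that $a-a^2\in{\rm Nil}$ for any strongly nil-clean $a$ (the GSNC analogue of Proposition~\ref{2.11}) combined with cancellation of the central unit $g$; and you then extract the group-theoretic conclusion arithmetically, ruling out infinite order by comparing coefficients of $(1-x)^N$ in $R[x,x^{-1}]$ and ruling out an odd prime $p$ dividing the order by the cyclotomic identity $x^p-1=p(x-1)+(x-1)^2g_0(x)$, which turns nilpotence of $1-\bar x$ in $R[x]/(x^p-1)$ into $p^k\cdot 1_R=0$ and hence $2\in U(R)$, contradicting the hypothesis. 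What your argument buys is self-containedness (no appeal to the GNC result of \cite{29}, no reduction modulo the Jacobson radical, no characteristic-$2$ step), at the cost of the Laurent-polynomial and cyclotomic bookkeeping; the paper's argument is shorter once the external facts are granted. Your suggested shortcut via \cite[Theorem 3.7]{5} is exactly how the paper itself handles the GSNC case in Theorem~\ref{3.3}, so it is consistent with the paper, but your direct argument is the more informative one.
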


\begin{proof}
Take $g \in Z(G)$. We know that every GSNC ring is GNC. So, by the \cite[Proposition 2.6]{29} and hypothesis, $2 \in Nil(RG)$. There is a ring epimorphism $RG \rightarrow (R/J(R))G$, so $(R/J(R))G$ is GSNC. Hence, without loss of generality, we can assume $J(R) = 0$. As an image of $RG$, $R$ is GSNC, so we have $2 = 0$ in $R$. Also, $1-g \in \Delta(RG)$, which implies that $1-g$ is not invertible. Therefore, there exist $e = e^2 \in RG$ and $q \in Nil(RG)$ such that $1-g = e + q$ and $eq=qe$. Since $Z(G)$ is abelian, we have $$1-e = g(1+g^{-1}q) \in Id(RG) \cap U(RG),$$ which guarantees that $e = 0$. Thus, $1-g = q \in Nil(RG)$. Hence, there is a natural number $k$ such that ${(1-g)}^{2^k}=0$. Since $char(RG)=2$, it must be that $g^{2^k}=1$. So, $Z(G)$ is a $2$-group, as claimed.
\end{proof}

The hyper-center of a group $G$, denoted by $H(G)$, is defined to be the union of the (transfinite) upper central series of the group $G$.	

\begin{lemma}\label{3.10}
Let $R$ be a ring with $2 \notin U(R)$ and $G$ be a group. If $RG$ is a GSNC ring, then $R$ is a GSNC ring and the hyper-center $H(G)$ is a $2$-group.
\end{lemma}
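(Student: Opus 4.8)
The plan is to view this as the hyper-central strengthening of Lemma~\ref{3.9} and to prove it by transfinite induction along the upper central series, feeding Lemma~\ref{3.9} back into itself at each successor stage. First I would dispose of the assertion that $R$ is GSNC: since $R \cong RG/\Delta(RG)$ is a homomorphic image of the GSNC ring $RG$, and a homomorphic image of a GSNC ring is again GSNC (a verbatim analogue of the forward implication of Proposition~\ref{2.13}, already used inside the proof of Lemma~\ref{3.9}), the ring $R$ is GSNC.

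For the hyper-center, write the transfinite upper central series of $G$ as $Z_0 = \{1\}$, $Z_{\alpha+1}/Z_\alpha = Z(G/Z_\alpha)$, and $Z_\lambda = \bigcup_{\alpha<\lambda} Z_\alpha$ for limit ordinals $\lambda$, so that $H(G) = \bigcup_\alpha Z_\alpha$ and each $Z_\alpha$ is a characteristic, hence normal, subgroup of $G$. I would then show by transfinite induction on $\alpha$ that every $Z_\alpha$ is a $2$-group. The case $\alpha = 0$ is trivial, and the limit case is immediate since an ascending union of $2$-groups is a $2$-group. For the successor step, suppose $Z_\alpha$ is a $2$-group. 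Since $Z_\alpha$ is normal in $G$, the canonical quotient map $G \to G/Z_\alpha$ induces a ring epimorphism $RG \to R(G/Z_\alpha)$, so $R(G/Z_\alpha)$ is GSNC; and since the coefficient ring is left unchanged we still have $2 \notin U(R)$. Hence Lemma~\ref{3.9}, applied to the group ring $R(G/Z_\alpha)$, gives that $Z(G/Z_\alpha) = Z_{\alpha+1}/Z_\alpha$ is a $2$-group.

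It remains to promote this to the statement that $Z_{\alpha+1}$ is a $2$-group. Take $g \in Z_{\alpha+1}$; its image in $Z_{\alpha+1}/Z_\alpha$ has order $2^k$ for some $k$, so $g^{2^k} \in Z_\alpha$, and then the induction hypothesis gives $g^{2^k}$ of order $2^m$ for some $m$, whence $g^{2^{k+m}} = 1$. Thus $Z_{\alpha+1}$ is a $2$-group, the induction closes, and $H(G) = \bigcup_\alpha Z_\alpha$ is a $2$-group as an ascending union of $2$-groups.

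The one step needing care is the successor step: one must observe both that $R(G/Z_\alpha)$ remains in the GSNC class, via the group-ring epimorphism induced by $G \to G/Z_\alpha$, and that the hypothesis $2 \notin U(R)$ is untouched because only the group, not the coefficient ring, changes, so that Lemma~\ref{3.9} is genuinely re-applicable at each stage. Everything else, namely stability of the property of being a $2$-group under ascending unions and under extensions of a $2$-group by a $2$-group, is routine.
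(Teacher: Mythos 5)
Your proof is correct and follows essentially the route the paper intends: the paper's own ``proof'' is a one-line appeal to the method of \cite[Theorem 2.6]{31}, which is exactly this transfinite induction along the upper central series, re-applying Lemma~\ref{3.9} to $R(G/Z_\alpha)$ at each successor stage via the induced ring epimorphism (stability of GSNC under epimorphic images being used in the same way inside the paper's proof of Lemma~\ref{3.9}). You have merely written out the details that the paper delegates to the citation, including the routine closure of $2$-groups under the extension and ascending-union steps.
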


\begin{proof}
Utilizing a completely similar method as that of Theorem 2.6 in \cite{31}, the claim follows.
\end{proof}

Recall that a group $G$ is called {\it nilpotent} if it has a normal series $$1=G_{0}\leq G_{1}\leq \dots \leq G_{n}=G$$ such that $$G_{i+1}/G_{i}\leq Z(G/G_{i}),$$ for $i=0,1, \dots, n-1$. Moreover, such a series is called a {\it central series} for $G$.

\begin{lemma}\label{3.11}
Let $R$ be a ring with $2 \notin U(R)$ and $G$ be a nilpotent group. Then, $RG$ is a GSNC ring, if, and only if, $R$ is a GSNC ring and $G$ is a $2$-group.
\end{lemma}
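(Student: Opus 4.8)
The plan is to prove the equivalence by reducing the nilpotent-group case to the already-known central and hyper-center results (Lemmas~\ref{3.9} and \ref{3.10}). The backward implication is the easy half: if $R$ is a GSNC ring and $G$ is a $2$-group, then since $2 \notin U(R)$ we have (via the standard theory of GSNC group rings, e.g. \cite[Theorem 3.7]{5}) that $RG$ is GSNC, so there is nothing to do there beyond citing the appropriate earlier result. The substance is the forward direction.

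For the forward implication, suppose $RG$ is a GSNC ring with $2 \notin U(R)$ and $G$ nilpotent. First I would invoke Lemma~\ref{3.10}: it tells us immediately that $R$ is a GSNC ring and that the hyper-center $H(G)$ is a $2$-group. So half of the conclusion is already in hand, and it remains only to upgrade ``$H(G)$ is a $2$-group'' to ``$G$ is a $2$-group.'' Here is where nilpotence of $G$ enters decisively: for a nilpotent group, the upper central series reaches $G$ in finitely many steps, i.e. $G = Z_n(G)$ for some $n$, and in particular $H(G) = G$. Thus $G = H(G)$ is a $2$-group, completing the proof.

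The one point that needs care is making the identification $H(G) = G$ rigorous and making sure it is genuinely a consequence of nilpotence as defined in the excerpt. The definition given just before the statement says $G$ is nilpotent if it has a central series $1 = G_0 \le G_1 \le \cdots \le G_n = G$; the standard fact is that the existence of any central series forces the upper central series $1 = Z_0(G) \le Z_1(G) \le \cdots$ to terminate at $G$ within $n$ steps (one proves $G_i \le Z_i(G)$ by induction on $i$). Since $H(G)$ is defined as the union of the transfinite upper central series and the ordinary upper central series already stabilizes at $G$, we get $H(G) = G$. I expect this step — correctly citing or briefly justifying that a nilpotent group equals its own hyper-center — to be the only real obstacle, and it is a classical group-theoretic fact rather than a computation. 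Everything else is an application of Lemma~\ref{3.10} in one direction and the GSNC group ring theory for $2$-groups in the other.

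\begin{proof}
$(\Longleftarrow)$. Assume $R$ is a GSNC ring and $G$ is a $2$-group. Since $2 \notin U(R)$, the structure theory of GSNC group rings (see \cite[Theorem 3.7]{5}) gives that $RG$ is a GSNC ring.

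$(\Longrightarrow)$. Assume $RG$ is a GSNC ring. By Lemma~\ref{3.10}, $R$ is a GSNC ring and the hyper-center $H(G)$ is a $2$-group. It remains to show $G$ is a $2$-group. Since $G$ is nilpotent, it possesses a central series $1 = G_0 \le G_1 \le \cdots \le G_n = G$ with $G_{i+1}/G_i \le Z(G/G_i)$ for each $i$. A routine induction on $i$ shows that $G_i \le Z_i(G)$, where $Z_i(G)$ denotes the $i$-th term of the upper central series; consequently $G = G_n \le Z_n(G) \le G$, so the upper central series of $G$ reaches $G$ after finitely many steps. Hence $H(G) = G$, and therefore $G$ is a $2$-group, as desired.
\end{proof}
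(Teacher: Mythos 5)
Your forward direction is exactly the paper's: the paper disposes of it by saying it follows from Lemma \ref{3.10}, and your explicit verification that a nilpotent group coincides with its hyper-center (so $G=H(G)$ is a $2$-group) is precisely the implicit content of that step, correctly filled in.

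The gap is in the backward direction. You settle it by citing \cite[Theorem 3.7]{5}, but as that result is used in this very paper (in the proof of Theorem \ref{3.3}) it runs in the opposite direction: from ``$RG$ is GSNC'' it extracts that $G$ is a $p$-group with $p\in {\rm Nil}(R)$; nowhere is it invoked as a sufficient condition for $RG$ to be GSNC, and the hypothesis ``$2\notin U(R)$'' alone is certainly not what makes the group ring GSNC. The paper instead does genuine (if short) work here: since $R$ is GSNC and $2\notin U(R)$, one first gets $2\in {\rm Nil}(R)$ (the paper quotes \cite[Proposition 2.6]{29}; alternatively, write $2=e+q$ with $e$ idempotent, $q$ nilpotent, commuting, and note $1-e=q-1\in U(R)$ forces $e=0$); then, because a nilpotent $2$-group is a locally finite $2$-group and $2$ is nilpotent in $R$, Connell's result \cite[Proposition 16]{25} shows that $\Delta(RG)$ is a nil ideal; finally the isomorphism $RG/\Delta(RG)\cong R$ together with the fact that the GSNC property transfers modulo nil ideals (\cite[Proposition 4.7(i)]{5}, the GSNC analogue of Proposition \ref{2.13}) yields that $RG$ is GSNC. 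Your proposal needs this chain, or an equally explicit substitute; as written, the sufficiency half rests on a citation that does not assert what you need.
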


\begin{proof}
$(\Rightarrow)$. It follows immediately from Lemma \ref{3.10}.
	
$(\Leftarrow)$. The hypothesis and \cite[Proposition 2.6]{29} yields that $2 \in Nil(R)$. In accordance with \cite[Proposition 16]{25}, we know that $\Delta(RG)$ is a nil-ideal.
Now the isomorphism $RG/\Delta(RG) \cong R$ and \cite[Proposition 4.7(i)]{5} allows us to get that $RG$
is a GSNC ring.
\end{proof}

We are now prepared to proceed by establishing the following affirmation.

\begin{theorem}\label{3.12}
Let $R$ be a ring such that $2 \notin U(R)$, and $G$ be a nilpotent group. Then, $RG$ is a GSWNC ring if, and only if, $R$ is a GSNC ring and $G$ is a $2$-group.
\end{theorem}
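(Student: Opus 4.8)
The plan is to prove the two implications separately, leaning heavily on the already-established Theorem~\ref{3.3}, Lemma~\ref{2.56}, Lemma~\ref{3.11}, and Lemma~\ref{2.30}. For the forward direction ($\Rightarrow$), assume $RG$ is GSWNC. Since $2 \notin U(R)$, Lemma~\ref{2.56} splits the situation: either $RG$ is a GSNC ring, or $RG$ is strongly weakly nil-clean. In the first case, Lemma~\ref{3.11} applies directly (as $G$ is nilpotent), giving that $R$ is GSNC and $G$ is a $2$-group, which is exactly what we want. In the second case, $RG$ being strongly weakly nil-clean is in particular GSWNC, so Theorem~\ref{3.3} forces $G$ to be a $2$-group and $2 \in \mathrm{Nil}(R)$; then one invokes the structure theory for strongly weakly nil-clean group rings (the cited \cite[Theorem 1.15]{26}), which under $2 \in \mathrm{Nil}(R)$ and $G$ a $2$-group says $R$ must be strongly nil-clean, and a strongly nil-clean ring is GSNC. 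Either way we land on ``$R$ is GSNC and $G$ is a $2$-group''.

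For the converse ($\Leftarrow$), assume $R$ is GSNC and $G$ is a nilpotent $2$-group. First note that a GSNC ring with $2 \notin U(R)$ satisfies $2 \in \mathrm{Nil}(R)$ (this is the ingredient used in Lemma~\ref{3.11}, coming from \cite[Proposition 2.6]{29}). Since $R$ is GSNC it is in particular GSWNC, and since $2 \in \mathrm{Nil}(R) \subseteq J(R)$, one could even note $R$ is GSNC iff GSWNC here via Lemma~\ref{2.29}. Now apply Lemma~\ref{3.11} in the ($\Leftarrow$) direction: since $R$ is GSNC and $G$ is a $2$-group, $RG$ is GSNC, hence GSWNC, which closes the argument. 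The key mechanical facts reused are: $\Delta(RG)$ is nil when $G$ is a locally finite $p$-group and $p \in \mathrm{Nil}(R)$ (\cite[Proposition 16]{25}), that a nilpotent $2$-group is locally finite, and that $RG/\Delta(RG) \cong R$ so Proposition~\ref{2.13} transfers GSWNC/GSNC back and forth.

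I expect the main obstacle to be the ``strongly weakly nil-clean'' branch of the forward direction: one must correctly cite and apply the classification of strongly weakly nil-clean group rings (the three-case dichotomy involving $2$-groups versus $3$-groups versus trivial $G$) and check that the hypothesis $2 \notin U(R)$ together with Theorem~\ref{3.3}'s conclusion $2 \in \mathrm{Nil}(R)$ rules out the $IU$/$3$-group alternative, thereby forcing $R$ to be strongly nil-clean rather than merely strongly weakly nil-clean. The subtlety is that GSWNC is strictly weaker than strongly weakly nil-clean, so one cannot apply the strongly weakly nil-clean classification to $RG$ directly — it is only available in the second branch of Lemma~\ref{2.56}, and one must be careful that in the first branch (GSNC) the conclusion about $R$ is ``GSNC'', which is precisely the strength claimed in the theorem, so no loss occurs. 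Once this case analysis is organized, the remaining steps are routine applications of the lemmas above.
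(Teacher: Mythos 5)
Your proposal is correct and follows essentially the same route as the paper: split via Lemma~\ref{2.56}, settle the GSNC branch with Lemma~\ref{3.11}, settle the strongly weakly nil-clean branch with the classification \cite[Theorem 1.15]{26} while using $2\notin U(R)$ (via \cite[Lemma 4.2]{1}) to exclude the $IU$/$3$-group alternative, and get the converse from Lemma~\ref{3.11}. The only difference is cosmetic: you route the second branch through Theorem~\ref{3.3} (whose proof in the paper is this very argument) rather than repeating it inline, which is harmless.
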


\begin{proof}
($\Rightarrow$). Suppose $RG$ is a GSWNC ring. From Lemma \ref{2.56}, we deduce that either $RG$ is a GSNC ring or $RG$ is a strongly weakly nil-clean ring. If, foremost, $RG$ is a GSNC ring, one concludes from Lemma \ref{3.11} that $R$ is a GSNC ring and $G$ is a $2$-group. If $RG$ is a strongly weakly nil-clean ring, then \cite[Theorem 1.15]{26} riches us that either $R$ is a strongly nil-clean ring and $G$ is a $2$-group, or $R$ is an $IU$-ring and $G$ is a $3$-group. If $R$ is a strongly nil-clean ring and $G$ is a $2$-group, there is nothing left to prove, because from \cite[Proposition 3.14]{2}, we have $2 \in Nil(R)$. However, if $R$ is an $IU$-ring and $G$ is a $3$-group, we derive with the aid of \cite[Lemma 4.2]{1} that $2 \in U(R)$, which is a contradiction.
	
($\Leftarrow$). It follows directly from Lemma \ref{3.11}.
\end{proof}

\medskip
\medskip


\medskip

\noindent{\bf Funding:} The work of the first-named author, P.V. Danchev, is partially supported by the Junta de Andaluc\'ia under Grant FQM 264. All other four authors are supported by Bonyad-Meli-Nokhbegan and receive funds from this foundation.

\vskip3.0pc

\end{document}